\newtheoremstyle{mystyle2}{}{}{}{2pt}{\scshape}{.}{ }{}
\newtheoremstyle{mystyle}{}{}{\slshape}{2pt}{\scshape}{.}{ }{}
\newtheoremstyle{etapestyle}{}{}{\itshape}{2em}{\sffamily}{:}{ }{\thmname{#1}}
\newtheoremstyle{definitionstyle}{}{}{}{2pt}{\bfseries}{.}{ }{}
\newtheorem{thm}{Th\'{e}or\`{e}me}[section]
\newtheorem{cor}[thm]{Corollaire}
\newtheorem{prop}[thm]{Proposition}
\newtheorem{lemme}[thm]{Lemme}
\newtheorem*{prop*}{Proposition}
\theoremstyle{mystyle2}
\theoremstyle{mystyle}
\theoremstyle{remark}
\newtheorem{rem}[thm]{Remarque}
\newtheorem{conventions}[thm]{Conventions}
\theoremstyle{etapestyle}
\newtheorem{etape1}{Etape 1 }
\newtheorem{etape2}{Etape 2 }
\theoremstyle{definitionstyle}
\newtheorem{defi}[thm]{D\'{e}finition}
 \DeclareMathOperator{\Spec}{Spec}
\DeclareMathOperator{\Ker}{Ker} 
\DeclareMathOperator{\Pic}{Pic}
\DeclareMathOperator{\Card}{Card}
\DeclareMathOperator{\Sing}{Sing}
\DeclareMathOperator{\Sym}{Sym}
\DeclareMathOperator{\Hilb}{Hilb}
\DeclareMathOperator{\Ima}{Im}
\title{Quelques espaces de modules d'intersections compl\`etes lisses qui sont quasi-projectifs}
\author{Olivier BENOIST\footnote{\'Ecole Normale Sup\'erieure, D\'epartement de Math\'ematiques et Applications,  
45 rue d'Ulm 75230 Paris Cedex 05 France, email : {obenoist@dma.ens.fr}}}
\date{}
\begin{document}

\maketitle
\renewcommand{\abstractname}{R\'esum\'e}
\begin{abstract} 
Pour certaines valeurs des degr\'es des \'equations, on montre, \`a l'aide de th\'eorie g\'eom\'etrique des invariants,
que l'espace de modules grossier des intersections compl\`etes lisses dans $\mathbb{P}^N$ est quasi-projectif. 
 \end{abstract}
   \selectlanguage{english}
\renewcommand{\abstractname}{Abstract}
\begin{abstract} 
For some values of the degrees of the equations,
we show, using geometric invariant theory, that the coarse moduli space of
smooth complete intersections in $\mathbb{P}^N$ is quasi-projective.\end{abstract}
\selectlanguage{frenchb}

\section{Introduction}\label{introqp}

\subsection{Quasi-projectivit\'e d'espaces de modules}

  La question de la quasi-projectivit\'e des espaces de modules de vari\'et\'es al\-g\'e\-briques a \'et\'e
r\'evolutionn\'ee par Mumford qui a d\'evelopp\'e pour l'\'etudier la th\'eorie g\'eom\'etrique des invariants.
Cette technique a permis \`a Mumford \cite{GIT} de montrer la quasi-projectivit\'e de l'espace de modules des courbes lisses,
puis \`a Knudsen \cite{KnudsenIII} et Gieseker et Mumford \cite{Mumfordstab} de montrer ind\'ependamment la projectivit\'e de l'espace
des modules des courbes stables.
En dimension sup\'erieure, les travaux de Viehweg \cite{Viehweg} montrent la quasi-projectivit\'e
des espaces de modules de vari\'et\'es lisses canoniquement polaris\'ees en caract\'eristique nulle. 

Une autre strat\'egie pour montrer la quasi-projectivit\'e d'un espace de modules, efficace quand
celui-ci est propre, a \'et\'e
d\'evelopp\'ee par Koll\'ar \cite{Kollarcomplete}. Elle devrait permettre de montrer la projectivit\'e de compactifications modulaires
des espaces de modules \'etudi\'es par Viehweg (voir \cite{livreKollar}).

Dans ces exemples, le fibr\'e canonique des vari\'et\'es consid\'er\'ees v\'erifie des propri\'et\'es de
positivit\'e. A contrario, on ne conna\^it pas d'\'enonc\'e g\'en\'eral sur la quasi-projectivit\'e des espaces de modules 
de vari\'et\'es de Fano.
Vu les exemples de Koll\'ar \cite{Kollarcex} d'espaces de modules non quasi-projectifs de vari\'et\'es polaris\'ees,
il n'est pas clair dans quelle g\'en\'eralit\'e attendre des r\'esultats positifs.

Dans ce texte, on \'etudie le cas particulier des intersections compl\`etes lisses
par des m\'ethodes de th\'eorie g\'eom\'etrique des invariants.
On construit ainsi de nombreux exemples d'espaces de modules quasi-projectifs de vari\'et\'es de Fano.

\subsection{\'Enonc\'e des principaux r\'esultats}

Soient $N\geq 2$, $1\leq c\leq N-1$ et $2\leq d_1\leq \ldots\leq d_c$ des entiers. Une intersection compl\`ete sur un corps $k$
est un sous-sch\'ema de codimension $c$ de $\mathbb{P}^N_k$ d\'efini par $c$ \'equations homog\`enes de degr\'es $d_1,\dots, d_c$.

Soit $H$ l'ouvert du sch\'ema de Hilbert de $\mathbb{P}^N_{\mathbb{Z}}$ param\'etrant les intersections compl\`etes lisses
(voir \cite{Sernesi} 4.6.1). Si on n'a pas $c=1$ et $d_1=2$, l'action par changement de coordonn\'ees de
$PGL_{N+1}$ sur $H$ est propre (\cite{Oolsep} Th\'eor\`eme 1.7),
et le th\'eor\`eme de Keel et Mori \cite{KM} montre l'existence d'un
quotient g\'eom\'etrique $M$ de $H$ par $PGL_{N+1}$, unique par \cite{Kollarquo} Corollary 2.15. C'est un espace
alg\'ebrique s\'epar\'e de type fini sur $\Spec(\mathbb{Z})$ :
l'espace de modules (grossier) des intersections compl\`etes lisses. 

L'espace alg\'ebrique $M$ est-il un sch\'ema ? Un sch\'ema quasi-projectif ? Un sch\'ema affine ?
Le r\'esultat principal de ce texte est le suivant :

\begin{thm}\label{edm}
Soit $M$ l'espace de modules des intersections compl\`etes lisses.
\begin{enumerate}[(i)]
 \item Si $d_1=\dots=d_c$ et si l'on n'a pas $c=1$ et $d_1=2$, $M$ est un sch\'ema affine.
\item Si $c\geq 2$, $d_1<d_2=\dots=d_c$ et $d_2(N-c+2)>d_1((c-1)(d_2-d_1)+1)$, $M$ est un sch\'ema quasi-projectif.
\end{enumerate}
\end{thm}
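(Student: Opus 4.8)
Le plan est d'appliquer la th\'eorie g\'eom\'etrique des invariants \`a l'action de $SL_{N+1}$ (qui induit celle de $PGL_{N+1}$ puisque les homoth\'eties agissent trivialement sur les espaces projectifs consid\'er\'es) sur un espace de param\`etres projectif naturel, puis de montrer que les intersections compl\`etes lisses en sont des points stables. On en d\'eduira que $M$ s'identifie \`a un ouvert d'un quotient GIT, donc \`a un sch\'ema quasi-projectif.

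D\'ecrivons cet espace de param\`etres ; posons $V_i = H^0(\mathbb{P}^N, \mathcal{O}(d_i))$. Dans le cas (i), o\`u $d_1 = \dots = d_c = d$, une intersection compl\`ete lisse $X$ est d\'etermin\'ee par le sous-espace $W = H^0(\mathcal{I}_X(d)) \subset V_1$ de dimension $c$, d'o\`u un plongement $SL_{N+1}$-\'equivariant $H \hookrightarrow \mathrm{Gr}(c, V_1)$ que l'on lin\'earise par le fibr\'e de Pl\"ucker. Dans le cas (ii), o\`u $d_1 < d_2 = \dots = d_c$, la forme $f_1$ de degr\'e $d_1$ est d\'etermin\'ee \`a scalaire pr\`es (car $h^0(\mathcal{I}_X(d_1)) = 1$), et $X$ se reconstruit \`a partir du couple $(\langle f_1 \rangle, W)$, o\`u $W = H^0(\mathcal{I}_X(d_2))$ est de dimension $r = (c-1) + \binom{N + d_2 - d_1}{N}$ et contient $f_1 \cdot V_{d_2 - d_1}$. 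On obtient ainsi un plongement $SL_{N+1}$-\'equivariant de $H$ dans la vari\'et\'e d'incidence
\[
Z = \{ (\ell, W) \in \mathbb{P}(V_1) \times \mathrm{Gr}(r, V_2) : \ell \cdot V_{d_2 - d_1} \subseteq W \},
\]
que l'on lin\'earise par $\mathcal{O}(a, b)$ pour des entiers $a, b > 0$ \`a choisir.

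Le c\oe ur de la preuve consiste \`a \'etablir, via le crit\`ere de Hilbert-Mumford, la stabilit\'e des intersections compl\`etes lisses. Pour un sous-groupe \`a un param\`etre $\lambda$ de poids $a_0 \geq \dots \geq a_N$ de somme nulle sur les coordonn\'ees, on estime le poids $\mu(x, \lambda)$ du point $x$ associ\'e \`a $X$ : si $X$ n'est pas stable, un tel $\lambda$ avec $\mu(x, \lambda) \leq 0$ force les \'equations de $X$ \`a s'annuler \`a un ordre trop \'elev\'e le long du drapeau de coordonn\'ees fix\'e par $\lambda$, ce qui entra\^ine que $X$ est singuli\`ere. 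C'est ici qu'intervient l'hypoth\`ese num\'erique : dans le cas (ii), le calcul des poids montre que la stabilit\'e des intersections lisses est assur\'ee pourvu que le rapport $a/b$ appartienne \`a un intervalle, et que cet intervalle est non vide pr\'ecis\'ement lorsque $d_2(N - c + 2) > d_1((c-1)(d_2 - d_1) + 1)$ ; dans le cas (i), la stabilit\'e vaut sous la seule exclusion $c = 1$, $d_1 = 2$. Cette \'etape, la plus d\'elicate, g\'en\'eralise la stabilit\'e classique des hypersurfaces lisses de degr\'e $\geq 3$, ce qui \'eclaire l'exclusion du cas $c = 1$, $d_1 = 2$.

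Il reste \`a conclure. La stabilit\'e montre que l'image de $H$ est contenue dans le lieu stable, de sorte que $M = H / PGL_{N+1}$ s'identifie \`a un ouvert du quotient g\'eom\'etrique $Z^s /\!/ SL_{N+1}$, lui-m\^eme ouvert du quotient projectif $Z^{ss} /\!/ SL_{N+1}$ ; c'est donc un sch\'ema quasi-projectif, d'o\`u (ii). Dans le cas (i), le discriminant $\Delta$, qui s'annule exactement sur les intersections singuli\`eres, est une section $SL_{N+1}$-invariante d'une puissance du fibr\'e de Pl\"ucker ; on a donc $H = \{\Delta \neq 0\}$, et $M$ s'identifie au lieu de non-annulation de l'image de $\Delta$ dans le quotient projectif $\mathrm{Gr}(c, V_1)^{ss} /\!/ SL_{N+1} = \mathrm{Proj}\big(\bigoplus_n H^0(\mathcal{O}(n))^{SL_{N+1}}\big)$, qui est un ouvert affine principal : $M$ est donc affine, ce qui prouve (i).
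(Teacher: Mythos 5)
Your overall skeleton for (ii) -- compactify $H$ in a projective parameter space, linearize the $SL_{N+1}$-action, and prove stability of smooth complete intersections via Hilbert--Mumford -- is the paper's strategy, and your incidence variety $Z$ is indeed $SL_{N+1}$-equivariantly isomorphic to the paper's $\bar{H}$ (send $(\ell,W)$ to $\ell$ together with the image of $W$ in $V_2/\ell\cdot V_{d_2-d_1}$). The fatal gap is your choice of linearizations: you only allow $\mathcal{O}(a,b)$ with $a,b>0$ restricted from $\mathbb{P}(V_1)\times\mathrm{Gr}(r,V_2)$. Under the isomorphism $Z\cong\bar{H}$, the Pl\"ucker bundle of $\mathrm{Gr}(r,V_2)$ restricts to $\mathcal{O}(m,1)$ in the paper's basis of $\Pic(\bar{H})$, where $m=\binom{N+d_2-d_1}{N}$ (the tautological subbundle contains $\ell\otimes V_{d_2-d_1}$, whose determinant is $\ell^{\otimes m}$), so your bundles are exactly the $\mathcal{O}(l_1,l_2)$ with $l_2>0$ and $l_1/l_2>m$. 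But by the lemme \ref{bonneq}, the proposition \ref{mucombo} and the optimality statement \ref{alphadeg} (iii), the inequality $l_1/l_2<\frac{d_2}{d_1}(N-c+2)$ is \emph{necessary} for every smooth complete intersection to be stable with respect to $\mathcal{O}(l_1,l_2)$ (this is the converse direction of the proof of the proposition \ref{stablisse}, which uses nothing but this inequality). Hence whenever $m\geq\frac{d_2}{d_1}(N-c+2)$, no linearization of your restricted form works. This happens for degrees satisfying the hypothesis of the theorem: take $c=2$, $d_1=2$, $d_2=4$ and any $N\geq3$; the hypothesis reads $4N>6$, yet $m=\binom{N+2}{2}=\frac{(N+1)(N+2)}{2}\geq 2N=\frac{d_2}{d_1}(N-c+2)$. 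So your claimed equivalence (``the interval for $a/b$ is non-empty precisely when $d_2(N-c+2)>d_1((c-1)(d_2-d_1)+1)$'') fails for your family of bundles, and the proof collapses for these degrees. To repair it one must use bundles that are ample on $Z$ but are not restrictions of ample bundles of the ambient product (negative $a$ in your coordinates; e.g.\ the paper's choice $l_1=kd_2(N+2-c)-1$, $l_2=kd_1$), and knowing these are ample is precisely the th\'eor\`eme \ref{ample} computing the ample cone of $\bar{H}$ -- a substantial result (via the correspondence $\Sigma$, the divisors $E$ and $W$, propositions \ref{E} and \ref{eqZ}) entirely absent from your proposal.

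Independently of this, the step you yourself call the heart of the proof is only a one-sentence heuristic in your write-up: that $\mu\leq0$ forces the equations to vanish to too high order along the flag and hence forces a singular point. This is the paper's th\'eor\`eme \ref{alphadeg}, whose proof occupies all of the third section (the invariants $s(F)$, $v_s(F)$, the lemmes \ref{singdeg}--\ref{positivite}, and the combinatorial selection of distinct indices $j_0,\dots,j_{s(F_l)}$); none of it is routine, and its precise numerical form is what produces the interval $\bigl(\frac{d_2}{d_1}\frac{c-1}{N},\frac{d_2}{d_1}(N-c+2)\bigr)$ for $l_1/l_2$. Note also that computing $\mu$ on $Z$ requires first normalizing representatives (your $W$ determines $F_2,\dots,F_c$ only modulo $F_1\cdot V_{d_2-d_1}$ and modulo each other): this is the content of the lemme \ref{bonneq}. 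Finally, for (i) your route (stability of all smooth points plus the invariant discriminant section) can be made to work, but it is much heavier than necessary: the paper observes that $H$ is affine, being the complement of an effective -- hence ample -- divisor in a Grassmannian, and concludes directly by Seshadri's theorem on proper actions of reductive groups, with no stability computation at all.
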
 

  En caract\'eristique nulle, la quasi-projectivit\'e d'un espace de modules de vari\'et\'es lisses dont le fibr\'e canonique est ample 
est connue par les travaux de Viehweg \cite{Viehweg}. 
Or si $c\geq 2$ et $d_1<d_2=\dots=d_c$, et que le fibr\'e canonique
des intersections compl\`etes consid\'er\'ees n'est pas ample, les hypoth\`eses
du th\'eor\`eme \ref{edm} (ii) sont v\'erifi\'ees. En effet, on a $N+1\geq (c-1)d_2+d_1\geq(c-1)(d_2-d_1)+(c-1)+d_1$. 
Ainsi, $d_2(N-c+2)\geq d_2(c-1)(d_2-d_1)+d_2d_1>d_1((c-1)(d_2-d_1)+1)$.
Les r\'esultats de Viehweg et le th\'eor\`eme \ref{edm} impliquent donc :

\begin{cor}
En caract\'eristique nulle, si $d_1\leq d_2=\dots=d_c$ et si l'on n'a pas $c=1$ et $d_1=2$, $M$ est un sch\'ema quasi-projectif.
\end{cor}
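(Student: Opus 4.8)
Le plan est de traiter s\'epar\'ement les deux cas recouverts par l'hypoth\`ese $d_1\leq d_2=\dots=d_c$. Lorsque $d_1=d_2=\dots=d_c$, l'hypoth\`ese excluant le cas $c=1$, $d_1=2$ permet d'appliquer directement le th\'eor\`eme \ref{edm} (i) : $M$ est un sch\'ema affine de type fini, donc quasi-projectif. Il reste \`a traiter le cas o\`u $c\geq 2$ et $d_1<d_2=\dots=d_c$ ; j'y poserais $e=d_1+(c-1)d_2-N-1$, de sorte que la formule d'adjonction donne $\omega_X=\mathcal{O}_X(e)$ pour toute intersection compl\`ete lisse $X$ du type consid\'er\'e, de dimension $N-c\geq 1$, et je distinguerais selon le signe de $e$.

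Si $\omega_X$ n'est pas ample, c'est-\`a-dire si $e\leq 0$, soit encore $N+1\geq (c-1)d_2+d_1$, on reconna\^it l'in\'egalit\'e dont part la discussion pr\'ec\'edant l'\'enonc\'e, laquelle en d\'eduit $d_2(N-c+2)>d_1((c-1)(d_2-d_1)+1)$. Les hypoth\`eses du th\'eor\`eme \ref{edm} (ii) sont alors satisfaites, et $M$ est quasi-projectif.

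Si $\omega_X$ est ample, c'est-\`a-dire si $e>0$, les intersections compl\`etes consid\'er\'ees sont des vari\'et\'es lisses canoniquement polaris\'ees ; en caract\'eristique nulle, le th\'eor\`eme de Viehweg \cite{Viehweg} leur associe (\`a polyn\^ome de Hilbert canonique fix\'e) un espace de modules grossier quasi-projectif $M^{\mathrm{can}}$. J'\'etudierais le morphisme d'oubli $M\to M^{\mathrm{can}}$ envoyant une intersection compl\`ete sur la vari\'et\'e sous-jacente munie de sa polarisation canonique, et je montrerais qu'il est quasi-fini. En effet, une structure d'intersection compl\`ete sur $X$ d\'etermine le faisceau $L=\mathcal{O}_X(1)$, qui est une racine $e$-i\`eme de $\omega_X$ dans $\Pic(X)$ ; les intersections compl\`etes lisses \'etant projectivement normales, l'immersion $X\hookrightarrow\mathbb{P}^N$ est donn\'ee par le syst\`eme lin\'eaire complet de $L$, donc d\'etermin\'ee par $L$ \`a l'action de $PGL_{N+1}$ pr\`es. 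Comme $\omega_X$ n'a qu'un nombre fini de racines $e$-i\`emes (deux telles racines diff\`erent par un \'el\'ement de $e$-torsion de $\Pic(X)$, groupe fini), les fibres de $M\to M^{\mathrm{can}}$ sont finies. Ce morphisme \'etant s\'epar\'e car $M$ l'est, le th\'eor\`eme principal de Zariski pour les espaces alg\'ebriques montre que $M$ est un sch\'ema et le factorise en une immersion ouverte suivie d'un morphisme fini vers $M^{\mathrm{can}}$ ; un morphisme fini vers un sch\'ema quasi-projectif \'etant quasi-projectif, et un ouvert d'un sch\'ema quasi-projectif l'\'etant \'egalement, $M$ est quasi-projectif.

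La difficult\'e principale se situe dans ce dernier sous-cas : il faut construire soigneusement $M\to M^{\mathrm{can}}$ comme morphisme d'espaces alg\'ebriques de type fini, en reliant les foncteurs de modules des deux probl\`emes, puis contr\^oler ses fibres. L'essentiel de l'argument est le comptage des racines $e$-i\`emes de $\omega_X$, qui assure la quasi-finitude et permet, via le th\'eor\`eme de Zariski, de passer du statut d'espace alg\'ebrique s\'epar\'e \`a celui de sch\'ema quasi-projectif.
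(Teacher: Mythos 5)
Your proposal is correct and follows essentially the same route as the paper: the equal-degrees case is handled by Theorem \ref{edm} (i), and when $c\geq 2$ and $d_1<d_2=\dots=d_c$ the paper makes exactly the same dichotomy on the ampleness of $\omega_X$, verifying the hypothesis of Theorem \ref{edm} (ii) in the non-ample case by the same numerical computation and invoking Viehweg in the ample case. The only difference is one of detail: you spell out (correctly) how Viehweg's theorem yields the quasi-projectivity of $M$ itself, via a quasi-finite separated morphism to the canonical moduli space -- using projective normality and the finiteness of the set of $e$-th roots of $\omega_X$ -- followed by Zariski's main theorem for algebraic spaces, a reduction the paper leaves implicit in its citation of \cite{Viehweg}.
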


\subsection{Le cas $d_1=\dots=d_c$}\label{edmaff}
La preuve du th\'eor\`eme \ref{edm} (ii) occupe la majeure partie de ce texte.
En revanche, le th\'eor\`eme \ref{edm} (i) est facile :
le cas des hypersurfaces ($c=1$)
est d\^u \`a Mumford (\cite{GIT} Prop. 4.2), et la preuve se g\'en\'eralise facilement.

\begin{proof}[$\mathbf{Preuve \text{ }du \text{ }th\acute{e}or\grave{e}me\text{ }\ref{edm} (i)}$]~
  Soit $\bar{H}$ la grassmanienne (relative sur $\Spec(\mathbb{Z})$)
des sous-espaces vectoriels de dimension $c$ de $H^0(\mathbb{P}^N,\mathcal{O}(d_1))$.
Le sch\'ema de Hilbert $H$ s'identifie \`a un ouvert de $\bar{H}$ :
le compl\'ementaire du diviseur discriminant. Comme la grassmanienne est lisse de
groupe de Picard engendr\'e par le fibr\'e de Pl\"ucker, tout diviseur effectif non trivial sur celle-ci est ample. Ainsi, le discriminant
est ample, et son compl\'ementaire $H$ est affine. On pose $H=\Spec(A)$.

  Par \cite{Oolsep} Th\'eor\`eme 1.7, $PGL_{N+1}$ agit proprement sur $H$. Comme de plus $PGL_{N+1}$ est r\'eductif, on peut appliquer un
th\'eor\`eme de Seshadri (\cite{Seshadri} Theorem 3, \cite{Kollarquo} Theorem 7.3) pour montrer que
le quotient g\'eom\'etrique de $H$ par $PGL_{N+1}$ est $M=\Spec(A^{PGL_{N+1}})$, et est donc affine.
\end{proof}

\subsection{Plan du texte}\label{plan}
L'argument de Mumford d\'ecrit ci-dessus fonctionne car $H$ admet une compactification tr\`es simple.
Quand $d_1<d_2=\dots=d_c$, $H$ a encore une compactification explicite $\bar{H}$ :
un fibr\'e en grassmaniennes sur un espace projectif. 
Les paragraphes \ref{constructionsqp} et \ref{parample} sont consacr\'es
\`a la construction et \`a l'\'etude de cette compactification.
Le r\'esultat principal est le th\'eor\`eme \ref{ample} qui calcule son c\^one ample.

On pourrait alors esp\'erer que l'argument de Mumford fonctionne encore :
il faudrait que le diviseur discriminant soit ample sur $\bar{H}$. Malheureusement, ce n'est
jamais le cas si $c=2$ (voir la remarque \ref{echec}). On doit donc appliquer la th\'eorie
g\'eom\'etrique des invariants de mani\`ere moins na\"ive : on fixe un fibr\'e
ample sur $\bar{H}$ et on calcule \`a l'aide du crit\`ere de Hilbert-Mumford quand toutes les intersections compl\`etes lisses sont stables.
C'est l'objet du paragraphe \ref{preuveedm}. 

La preuve de l'in\'egalit\'e qui permet de v\'erifier le crit\`ere de Hilbert-Mumford est report\'ee \`a la troisi\`eme partie : c'est
le th\'eor\`eme \ref{alphadeg}.
Celui-ci est \'enonc\'e et d\'emontr\'e sans hypoth\`eses restrictives sur les degr\'es des intersections compl\`etes.

On peut maintenant expliquer le r\^ole des hypoth\`eses du th\'eor\`eme \ref{edm}. Si l'on n'a pas $d_1\leq d_2=\dots=d_c$,
je ne connais pas de compactification explicite de $H$ analogue \`a celles \'evoqu\'ees ci-dessus.
Si $d_1<d_2=\dots=d_c$, mais qu'on n'a pas $d_2(N-c+2)>d_1((c-1)(d_2-d_1)+1)$, aucun fibr\'e en droites ample sur $\bar{H}$ ne rend toutes
les intersections compl\`etes lisses stables (proposition \ref{stablisse}),
et on ne peut pas appliquer la th\'eorie g\'eom\'etrique des invariants sur $\bar{H}$.

Pour montrer la quasi-projectivit\'e de $M$
pour d'autres valeurs des degr\'es \`a l'aide de th\'eorie g\'eom\'etrique des invariants, il faut donc consid\'erer
une autre compactification de $H$. On peut choisir
(suivant Mumford \cite{Mumfordstab})
le sch\'ema de Hilbert de $\mathbb{P}^N$. Cette possibilit\'e est discut\'ee dans la quatri\`eme partie.
On y explique en particulier
pourquoi l'in\'egalit\'e \ref{alphadeg} est plus faible que celle
qui serait n\'ecessaire \`a la preuve de la Hilbert-stabilit\'e des intersections compl\`etes lisses.

\subsection{Liens avec d'autres travaux}

La th\'eorie g\'eom\'etrique des invariants d'hypersurfaces ou d'intersections comp\-l\`etes
dans $\mathbb{P}^N$ a \'et\'e \'etudi\'ee dans de nombreux cas particuliers :
surfaces quartiques \cite{Shah4}, solides cubiques \cite{Allcock33}, cubiques dans $\mathbb{P}^5$ \cite{Laza34}, pinceaux de
quadriques dans $\mathbb{P}^4$ \cite{AM224}, intersections d'une quadrique et d'une cubique dans $\mathbb{P}^3$
\cite{Lazaetcie}, \cite{Lazaetcie2}, ...

Chacun de ces travaux \'etudie un espace de modules pr\'ecis et m\`ene une analyse compl\`ete : le lieu semi-stable est calcul\'e
et on obtient une compactification de l'espace de modules.
Dans ce texte, on obtient des r\'esultats pour beaucoup de valeurs des degr\'es, mais le r\'esultat est moins fort :
on se contente de montrer que les intersections compl\`etes lisses sont stables.

Signalons particuli\`erement \cite{Lazaetcie} et \cite{Lazaetcie2} o\`u est men\'ee une \'etude tr\`es pr\'ecise du
cas particulier $N=3$, $c=2$, $d_1=2$ et $d_2=3$, en utilisant la compactification $\bar{H}$ \'etudi\'ee dans la suite
de cet article.

\paragraph{Remerciements.}~ Les suggestions d'un rapporteur anonyme ont permis d'am\'e\-lio\-rer
la pr\'esentation de ce texte de mani\`ere importante.

\section{G\'eom\'etrie du sch\'ema $\bar{H}$}\label{partieqp}
\begin{conventions}\label{notationsgen}
Dans cette partie, on fixe $2\leq c\leq N-1$ et $2\leq d_1<d_2=\ldots= d_c$ des entiers.
Une intersection compl\`ete sur un corps $K$ est toujours de codimension $c$
dans $\mathbb{P}^N_K$ et de degr\'es $d_1,\dots,d_c$.

Sauf mention du contraire, les
sch\'emas que nous consid\'ererons seront d\'efinis sur
$\Spec(\mathbb{Z})$. En particulier,
$\mathbb{P}^N=\mathbb{P}^N_{\mathbb{Z}}$.
Quand on manipulera un point g\'eom\'etri\-que, on
notera toujours $K$ le corps alg\'ebriquement clos sur lequel il est d\'efini.

  Si $\mathcal{F}$ est un faisceau localement libre sur un sch\'ema,
le fibr\'e vectoriel g\'eom\'e\-tri\-que associ\'e \`a $\mathcal{F}$ est
celui dont le faisceau des sections est $\mathcal{F}^{\vee}$. Par
$\mathbb{G}(r,\mathcal{F})$, on d\'esignera la grassmannienne des sous-espaces vectoriels de rang
$r$ de ce fibr\'e vectoriel g\'eom\'etrique. Quand $r=1$, on notera aussi ce sch\'ema $\mathbb{P}(\mathcal{F})$.
\end{conventions}

\subsection{Constructions}\label{constructionsqp}
On construit tout d'abord les sch\'emas $H$ et $\bar{H}$, les familles de sous-sch\'emas de $\mathbb{P}^N$ qu'ils param\`etrent,
ainsi que divers faisceaux localement libres sur ces espaces.
On utilisera notamment les notations du diagramme ci-dessous.

$$\xymatrix @C=5mm @R=5mm{
&&&\bar{\mathcal{X}}\ar[dlll]^{pr_2}\ar@{^{(}->}[dl]     \\
\bar{H}\ar[dd]_{\pi_2}&&\pi_2^*\bar{\mathcal{X}}_{d_1}\ar[ll]^{pr_1}\ar@{^{(}->}[dl]\ar[dd]_{\pi_2}&                  \\
& \mathbb{P}^N\times\bar{H}\ar[ul]^{pr}\ar[dd]_<<<<{\pi_2}&&  \\
\bar{H}_{d_1}\ar[dd]_{\pi_1}&&\bar{\mathcal{X}}_{d_1}\ar[ll]|\hole^<<<<<<<<<<<{pr_1}\ar@{^{(}->}[dl]&                  \\
& \mathbb{P}^N\times\bar{H}_{d_1}\ar[dd]_{\pi_1}\ar[ul]^{pr}&&  \\
\Spec(\mathbb{Z})&&&\\
&   \mathbb{P}^N   \ar[ul]^{pr}    &&
 }$$

\paragraph{Hypersurfaces.}~
Soit $d\geq 1$. Notons $pr:\mathbb{P}^N\to \Spec(\mathbb{Z})$ le morphisme structurel.
Le faisceau $pr_*\mathcal{O}_{\mathbb{P}^N}(d)$ sur $\Spec(\mathbb{Z})$ est
localement libre, et ses fibres g\'eom\'etriques s'identifient \`a $H^0(\mathbb{P}^N_K,\mathcal{O}(d))$.
 On note
$\bar{H}_{d}=\mathbb{P}((pr_*\mathcal{O}_{\mathbb{P}^N}(d))^{\vee})$ et
$\pi_1:\bar{H}\rightarrow\Spec(\mathbb{Z})$ la projection.
Un point g\'eom\'etrique de
$\bar{H}_d$ est une droite vectorielle $\langle F\rangle$ de
$H^0(\mathbb{P}^N_K,\mathcal{O}(d))$.

\vspace{1em}

On note encore $pr:\mathbb{P}^N\times\bar{H}_d\to\bar{H}_d$ et
$\pi_1:\mathbb{P}^N\times\bar{H}_d\to\mathbb{P}^N$ les changements de base.
La construction de $\bar{H}_d$ fournit une injection du
fibr\'e en droites tautologique
$\mathcal{O}_{\bar{H}_d}(-1)\to\pi_1^*pr_*\mathcal{O}_{\mathbb{P}^N}(d)$. Par changement de base
par le morphisme plat $\pi_1$, cette injection se r\'e\'ecrit
$\mathcal{O}_{\bar{H}_d}(-1)\to
pr_*\mathcal{O}_{\mathbb{P}^N\times\bar{H}_d}(d;0)$. Tirant en
arri\`ere sur $\mathbb{P}^N\times\bar{H}_d$, et utilisant
l'adjonction, on obtient un morphisme de fibr\'es en droites
$\mathcal{O}_{\mathbb{P}^N\times\bar{H}_d}(0;-1)\rightarrow\mathcal{O}_{\mathbb{P}^N\times\bar{H}_d}(d;0)$.
Le lieu o\`u ce morphisme est nul est un diviseur de Cartier $\bar{\mathcal{X}}_d$ sur
$\mathbb{P}^N\times\bar{H}_d$. Par construction, la fibre
en $\langle F\rangle$ de $pr_1:\bar{\mathcal{X}}_d\rightarrow\bar{H}_d$ est
le sous-sch\'ema $\{F=0\}$ de $\mathbb{P}^N_K$.
L'\'equation de $\bar{\mathcal{X}}_{d}$ fournit sur
$\mathbb{P}^N\times\bar{H}_{d}$ la suite exacte courte suivante
:
$$0\rightarrow\mathcal{O}_{\mathbb{P}^N\times\bar{H}_{d}}(-d;-1)\rightarrow
\mathcal{O}_{\mathbb{P}^N\times\bar{H}_{d}}\rightarrow\mathcal{O}_{\bar{\mathcal{X}}_{d}}\rightarrow
0.$$ Tensorisons par
$\mathcal{O}_{\mathbb{P}^N\times\bar{H}_{d}}(l;0)$, et
appliquons $pr_*$ en remarquant par calcul du $H^1$ des fibres que
$R^1pr_*\mathcal{O}_{\mathbb{P}^N\times\bar{H}_{d}}(-d;-1)=0$.
Utilisons la formule de projection et le changement de base par le
morphisme plat $\pi_1$ pour obtenir sur $\bar{H}_{d}$ la suite
exacte courte de faisceaux suivante :
\begin{equation}
0\rightarrow
\pi_1^*pr_*\mathcal{O}_{\mathbb{P}^N}(l-d)\otimes\mathcal{O}_{\bar{H}_{d}}(-1)\rightarrow
\pi_1^*pr_*\mathcal{O}_{\mathbb{P}^N}(l)\rightarrow
pr_{1*}\mathcal{O}_{\bar{\mathcal{X}}_{d}}(l)\rightarrow 0.
\label{faisceau1}
\end{equation}
Par exactitude \`a droite du produit tensoriel, on voit que la fibre
g\'eom\'etrique $(pr_{1*}\mathcal{O}_{\bar{\mathcal{X}}_{d}}(l))_{\langle F\rangle}$
est $H^0(\mathbb{P}^N_K,\mathcal{O}(l))/\left\langle
F\right\rangle$, o\`u l'on a not\'e $\langle F\rangle=H^0(\mathbb{P}^N_K,\mathcal{O}(l-d))\cdot F$. Ainsi,
$pr_{1*}\mathcal{O}_{\bar{\mathcal{X}}_{d}}(l)$ est localement libre
par constance de la dimension de ses fibres, et la fibre
g\'eom\'etrique en $\langle F\rangle$ de la suite exacte courte de faisceaux localement libres (\ref{faisceau1}) est :
\begin{equation}
0\rightarrow\left\langle F\right\rangle\rightarrow
H^0(\mathbb{P}^N_K,\mathcal{O}(l)) \rightarrow
H^0(\mathbb{P}^N_K,\mathcal{O}(l))/\left\langle
F\right\rangle\rightarrow 0.
\label{fibfaisceau1}
\end{equation}

\paragraph{Intersections compl\`etes.}

On a vu ci-dessus que
$pr_{1*}\mathcal{O}_{\bar{\mathcal{X}}_{d_1}}(d_2)$ est un faisceau localement libre sur $\bar{H}_{d_1}$. On notera $\bar{H}
=
\mathbb{G}_{\bar{H}_{d_1}}(c-1,(pr_{1*}\mathcal{O}_{\bar{\mathcal{X}}_{d_1}}(d_2)^{\vee}))$
et $\pi_2:\bar{H}\rightarrow\bar{H}_{d_1}$ la
projection. Par (\ref{fibfaisceau1}), les points g\'eom\'etriques de
$\bar{H}$ sont en bijection avec la donn\'ee d'une droite $\langle F_1\rangle$
 de $H^0(\mathbb{P}^N_K,\mathcal{O}(d_1))$ et d'un sous-espace vectoriel de dimension
$c-1$ de $H^0(\mathbb{P}^N_K,\mathcal{O}(d_2))/\langle F_1\rangle$.
Si $F_2,\dots, F_c\in H^0(\mathbb{P}^N_K,\mathcal{O}(d_2))$ engendrent ce sous-espace vectoriel, on notera
$[F_1,F_2,\dots, F_c]$ ce point g\'eom\'etrique de
$\bar{H}$.
La description de $\bar{H}$ comme grassmannienne relative
sur un espace projectif montre que son groupe de Picard est de rang
$2$, engendr\'e par $\mathcal{O}(1,0)=\pi_2^*\mathcal{O}(1)$ et par
le fibr\'e de Pl\"ucker relatif $\mathcal{O}(0,1)$.

\vspace{1em}

La construction de $\bar{H}$ fournit une injection du
fibr\'e tautologique $\mathcal{F}\to\pi_2^*pr_{1*}\mathcal{O}_{\bar{\mathcal{X}}_{d_1}}(d_2)$. Par changement
de base par le morphisme plat $\pi_2$, cette injection se
r\'e\'ecrit $\mathcal{F}\rightarrow
pr_{1*}\mathcal{O}_{\pi_2^*\bar{\mathcal{X}}_{d_1}}(d_2;0,0)$. Tirant en
arri\`ere sur $\pi_2^*\bar{\mathcal{X}}_{d_1}$, et utilisant l'adjonction,
on obtient un morphisme de fibr\'es vectoriels
$pr^*_{1}\mathcal{F}\rightarrow\mathcal{O}_{\pi_2^*\bar{\mathcal{X}}_{d_1}}(d_2;0,0)$.
Le lieu des z\'eros de ce morphisme est un sous-sch\'ema
$\bar{\mathcal{X}}$ de $\pi_2^*\bar{\mathcal{X}}_{d_1}$. Par
construction, la fibre en $[F_1,F_2,\dots,F_c]$ de la projection
$pr_2:\bar{\mathcal{X}}\rightarrow\bar{H}$ est le
sous-sch\'ema $\{F_1=F_2=\dots=F_c=0\}$ de $\mathbb{P}^N_K$.

Notons $H$ l'ouvert de
$\bar{H}$ constitu\'e des points g\'eom\'etriques $[F_1,F_2,\dots,F_c]$ tels que $\{F_1=F_2=\dots=F_c=0\}$ 
soit lisse de codimension $c$ dans $\mathbb{P}^N_K$. On note $\mathcal{X}\rightarrow H$
la restriction de $\bar{\mathcal{X}}\rightarrow\bar{H}$ \`a $H$. 
On montre ais\'ement que $\mathcal{X}\rightarrow H$
s'identifie au sch\'ema de Hilbert des intersections compl\`etes lisses et \`a sa famille universelle.

\vspace{1em}

Par construction de $\bar{H}$, on dispose d'une suite exacte courte de faisceaux localement libres sur $\bar{H}$ :
\begin{equation}\label{faisceau2}
 0\to\mathcal{F}\to  \pi_2^*pr_{1*}\mathcal{O}_{\bar{\mathcal{X}}_{d_1}}(d_2)\to \mathcal{Q}\to0,
\end{equation}
dont la fibre g\'eom\'etrique en
$[F_1,\dots, F_c]$ s'identifie \`a  :
\begin{equation}\label{fibfaisceau2}
  \scriptstyle{0\to\langle F_2,\dots,F_c\rangle\to H^0(\mathbb{P}^N_K,\mathcal{O}(d_2))/\langle F_1\rangle
\to H^0(\mathbb{P}^N_K,\mathcal{O}(d_2))/\langle F_1,\dots,F_c\rangle\to0.}
\end{equation}

Par ailleurs, par (\ref{faisceau1}) pour $d=d_1$ et $l=d_2$,
on dispose d'une suite exacte courtes de faisceaux localement libres sur $\bar{H}$ :
\begin{equation}\label{faisceau3}
 \scriptstyle{0\to\pi_2^*\pi_1^*pr_*\mathcal{O}_{\mathbb{P}^N}(d_2-d_1)\otimes\mathcal{O}(-1,0)}\to\scriptstyle{
\pi_2^*\pi_1^*pr_{*}\mathcal{O}_{\mathbb{P}^N}(d_2)}\to\scriptstyle{\pi_2^*pr_{1*}\mathcal{O}_{\bar{\mathcal{X}}_{d_1}}(d_2)}\to 0.
\end{equation} 
Par (\ref{fibfaisceau1}), la fibre g\'eom\'etrique de (\ref{faisceau3}) en
$[F_1,\dots, F_c]$ s'identifie \`a  :
\begin{equation}\label{fibfaisceau3}
 0\to\langle F_1\rangle\to H^0(\mathbb{P}^N_K,\mathcal{O}(d_2))
\to H^0(\mathbb{P}^N_K,\mathcal{O}(d_2))/\langle F_1\rangle\to0.
\end{equation}

Notons $\mathcal{E}$ le faisceau localement libre noyau de la
compos\'ee des surjections
$\pi_2^*\pi_1^*pr_{*}\mathcal{O}_{\mathbb{P}^N}(d_2)\rightarrow
\pi_2^*pr_{1*}\mathcal{O}_{\bar{\mathcal{X}}_{d_1}}(d_2)\rightarrow
\mathcal{Q}$. Les suites exactes (\ref{faisceau2}), (\ref{fibfaisceau2}), (\ref{faisceau3}) et
(\ref{fibfaisceau3}) permettent d'\'ecrire le diagramme
exact (\ref{diagfaisc}) de faisceaux localement libres sur $\bar{H}$ ci-dessous et de calculer sa fibre g\'eom\'etrique
(\ref{diagfibre}) en $[F_1,\dots,F_c]$ :

\begin{equation}
\begin{split}
\xymatrix @C=2mm @R=3mm {
&0\ar[d]&0\ar[d]&
\\
&\scriptstyle{\pi_2^*\pi_1^*pr_{*}\mathcal{O}_{\mathbb{P}^N}(d_2-d_1)\otimes\mathcal{O}(-1,0)}\ar@{=}[r]\ar[d]
&\scriptstyle{\pi_2^*\pi_1^*pr_{*}\mathcal{O}_{\mathbb{P}^N}(d_2-d_1)\otimes\mathcal{O}(-1,0)}\ar[d]&        \\
0\ar[r]&\mathcal{E}\ar[r]\ar[d]&\scriptstyle{\pi_2^*\pi_1^*pr_{*}\mathcal{O}_{\mathbb{P}^N}(d_2)}\ar[r]\ar[d]
&\mathcal{Q} \ar[r]\ar@{=}[d] & 0                 \\
0\ar[r]&\mathcal{F}\ar[r]\ar[d]&
\scriptstyle{\pi_2^*pr_{1*}\mathcal{O}_{\bar{\mathcal{X}}_{d_1}}(d_2)}\ar[r]\ar[d]
&\mathcal{Q} \ar[r]&0             \\
&0&0 }
\label{diagfaisc}
\end{split}
\end{equation}
%
%Par (\ref{fibfaisceau2}) et (\ref{fibfaisceau3}), la fibre g\'eom\'etrique en $[F_1,\dots,F_c]$ de ce diagramme s'identifie \`a :
%
%
\begin{equation}
\begin{split}
\xymatrix @C=2mm @R=3mm {
&0\ar[d]&0\ar[d]&
\\
&\scriptstyle{\langle F_1\rangle}\ar@{=}[r]\ar[d]
&\scriptstyle{\langle F_1\rangle}\ar[d]&        \\
0\ar[r]&\scriptstyle{\langle
F_1,\dots, F_c\rangle}\ar[r]\ar[d]&\scriptstyle{H^0(\mathbb{P}^N_K,\mathcal{O}(d_2))}\ar[r]\ar[d]
&\scriptstyle{H^0(\mathbb{P}^N_K,\mathcal{O}(d_2))/\langle
F_1,\dots, F_c\rangle }\ar[r]\ar@{=}[d] & 0                 \\
0\ar[r]&\scriptstyle{\langle
F_1,\dots, F_c\rangle/\langle F_1\rangle}\ar[r]\ar[d]&
\scriptstyle{H^0(\mathbb{P}^N_K,\mathcal{O}(d_2))/\langle F_1\rangle}\ar[r]\ar[d]
&\scriptstyle{H^0(\mathbb{P}^N_K,\mathcal{O}(d_2))/\langle
F_1,\dots, F_c\rangle} \ar[r]&0             \\
&0&0 }
\label{diagfibre}
\end{split}
\end{equation}

\paragraph{Action de $SL_{N+1}$.}~
L'action de $SL_{N+1}$ sur $\mathbb{A}^{N+1}$ induit des actions de
$SL_{N+1}$ par changement de coordonn\'ees sur tous les espaces et
faisceaux d\'ecrits ci-dessus.
En particulier, $SL_{N+1}$ agit sur le faisceau localement libre
$pr_*\mathcal{O}_{\mathbb{P}^N}(d_1)$,
induisant une lin\'earisation de
$\mathcal{O}(1)$ sur $\bar{H}_{d_1}$. Par fonctorialit\'e, on en
d\'eduit une lin\'earisation de $\mathcal{O}(1,0)$ sur
$\bar{H}$.
De m\^eme, $SL_{N+1}$ agit sur le faisceau localement libre
$pr_{1*}\mathcal{O}_{\mathcal{X}_{d_1}}(d_2)$, donc sur 
$\bigwedge^{c-1}(pr_{1*}\mathcal{O}_{\mathcal{X}_{d_1}}(d_2))$,
induisant une lin\'earisation du fibr\'e de Pl\"ucker relatif $\mathcal{O}(0,1)$ sur $\bar{H}$.
Par combinaisons lin\'eaires, on construit alors une lin\'earisation
naturelle de tous les fibr\'es en droites $\mathcal{O}(l_1,l_2)$ sur
$\bar{H}$. Ces lin\'earisations sont uniques par
\cite{GIT}, Prop. 1.4.

\subsection{Fibr\'es amples sur $\bar{H}$}\label{parample}
Avant de pouvoir prouver le th\'eor\`eme \ref{ample} qui d\'ecrit les fibr\'es en droites amples sur $\bar{H}$,
on a besoin r\'esultats pr\'eliminaires sur la g\'eom\'etrie de $\bar{H}$.

\paragraph{Lien entre $\bar{H}$ et $\bar{H}_{d_1}\times \bar{H}_{d_2}^{c-1}$.}
L'injection de faisceaux localement libres $\mathcal{E}\rightarrow
\pi_2^*\pi_1^*pr_{*}\mathcal{O}_{\mathbb{P}^N}(d_2)$ dans le diagramme
(\ref{diagfaisc}) induit une immersion ferm\'ee
$\mathbb{P}\mathcal{E}^{\vee}\hookrightarrow\bar{H}\times\bar{H}_{d_2}$ entre fibr\'es projectifs sur $\bar{H}$.
En prenant le produit fibr\'e au-dessus de $\bar{H}$ de $c-1$ copies de ces fibr\'es projectifs, on obtient une immersion ferm\'ee
$i:\Sigma\hookrightarrow\bar{H}\times\bar{H}_{d_2}^{c-1}$.

Remarquons que, par le diagramme (\ref{diagfibre}), les points
g\'eom\'etriques de $\Sigma$ sont les
$([F_1,\dots,F_c],\langle G_2\rangle,\dots,\langle G_c\rangle)\in(\bar{H}\times\bar{H}_{d_2}^{c-1})(K)$
tels que $G_i\in\left\langle F_1,\dots,F_c \right\rangle$ pour $2\leq i\leq c$. Le diagramme
ci-dessous r\'esume les notations que nous utiliserons.

$$\xymatrix{
&\bar{H}\times\bar{H}_{d_2}^{c-1}\ar[ddl]_{\pi_2\times id}\ar[ddr]^{p_1}&        \\
&\Sigma\ar^{i}[u]\ar[dl]^{e}\ar[dr]_{q}&                   \\
\bar{H}_{d_1}\times\bar{H}_{d_2}^{c-1}\ar[dr]_{p_1}&& \bar{H}\ar[dl]^{\pi_2}     \\
&\bar{H}_{d_1}&}$$

Notre objectif est de comparer les espaces $\bar{H}$
et $\bar{H}_{d_1}\times \bar{H}_{d_2}^{c-1}$ via
$\Sigma$. On commence par \'etudier l'application
$e:=(\pi_2\times id)\circ i$.
La description des points g\'eom\'etriques de $\Sigma$ et de $\bar{H}_{d_1}\times \bar{H}_{d_2}^{c-1}$
montre que le ferm\'e de $\bar{H}_{d_1}\times \bar{H}_{d_2}^{c-1}$
o\`u $e$ a des fibres de dimension $>0$ a pour points g\'eom\'etriques les
$(\langle F_1\rangle,\langle G_2\rangle,\dots,\langle G_c\rangle)$ tels que 
$\langle F_1\rangle\cap\langle G_2,\dots,G_c\rangle\neq\{0\}$. On note ce ferm\'e $W$, et on le munit de sa structure r\'eduite.
Notons $E$ le ferm\'e $e^{-1}(W)$ de $\Sigma$, et munissons-le de sa structure r\'eduite ($E=e^{-1}(W)$ vaut ensemblistement
mais pas n\'ecessairement sch\'ematiquement).
Les points g\'eom\'etriques de $E$ sont les points g\'eom\'etriques
$([F_1,\dots,F_c],\langle G_2\rangle,\dots,\langle G_c\rangle)$ de $\Sigma$
tels que $\langle F_1,G_2,\dots,G_c\rangle\subsetneq\langle F_1,\dots,F_c\rangle$.

\begin{lemme}\label{EWirr}
Les sch\'emas $E$ et $W$ sont irr\'eductibles.
\end{lemme}

\begin{proof}[$\mathbf{Preuve}$]
 Comme $W=e(E)$, il suffit de montrer que $E$ est irr\'eductible. Pour cela, il suffit de montrer que les fibres g\'eom\'etriques 
de $q|_E:E\to\bar{H}$ le sont.
Soit $[F_1,\dots,F_c]$ un point g\'eom\'etrique de $\bar{H}$. 
La fibre g\'eom\'etrique de $q|_E:E\to\bar{H}$ correspondante est constitu\'ee des $(\langle G_2\rangle,\dots,\langle G_c\rangle)$ n'induisant
pas une base de $\langle F_1,\dots,F_c\rangle/\langle F_1\rangle$. 
Elle est donc ensemblistement d\'efinie par l'annulation d'un d\'eterminant,
et irr\'eductible par irr\'eductibilit\'e du d\'eterminant.
\end{proof}

\begin{lemme}\label{birat}
Le morphisme $e|_{\Sigma\setminus E}:\Sigma\setminus E\to (\bar{H}_{d_1}\times \bar{H}_{d_2}^{c-1})\setminus W$ est un isomorphisme.
\end{lemme}

\begin{proof}[$\mathbf{Preuve}$]
Le morphisme $e=(\pi_2\times id)\circ i$ est propre comme compos\'ee, donc, par changement de base, 
$e|_{\Sigma\setminus E}$ est propre. De plus, la description des points g\'eom\'etriques de
$\Sigma$ et $\bar{H}_{d_1}\times \bar{H}_{d_2}^{c-1}$
montre que $e|_{\Sigma\setminus E}$ r\'ealise une bijection entre points g\'eom\'etriques. Ainsi, $e|_{\Sigma\setminus E}$ est propre
et quasifini, donc fini. Finalement, par lissit\'e g\'en\'erique ($\bar{H}_{d_1}\times \bar{H}_{d_2}^{c-1}$ est
de caract\'eristique g\'en\'erique $0$), $e^{-1}(\langle F_1\rangle,\langle G_2\rangle,\dots,\langle G_c\rangle)$ est un point r\'eduit pour 
$(\langle F_1\rangle,\langle G_2\rangle,\dots,\langle G_c\rangle)$ g\'e\-n\'e\-rique, de sorte que
$e|_{\Sigma\setminus E}$ est birationnel. Comme $\bar{H}_{d_1}\times \bar{H}_{d_2}^{c-1}$ est
r\'egulier donc normal, par le Main Theorem de Zariski,
$e|_{\Sigma\setminus E}$ est un isomorphisme.
\end{proof}

\paragraph{Des \'equations pour $E$ et $W$.}~
\begin{prop}\label{E}
Le sous-sch\'ema $E$ est un diviseur de Cartier dans $\Sigma$ et $\mathcal{O}(E)=i^*\mathcal{O}(0,-1,1,\dots,1)$.
\end{prop}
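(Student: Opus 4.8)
\emph{Plan de preuve.} L'idée serait de réaliser $E$ comme le lieu des zéros d'une section d'un fibré en droites sur $\Sigma$, obtenue comme déterminant d'un morphisme entre deux fibrés vectoriels de même rang $c-1$. Je commencerais par construire ce morphisme. Pour $2\leq i\leq c$, notons $\mathcal{L}_i$ l'image réciproque sur $\Sigma$ du fibré en droites tautologique $\mathcal{O}_{\mathbb{P}\mathcal{E}^{\vee}}(-1)$ du $i$-ième facteur de $\Sigma=\mathbb{P}\mathcal{E}^{\vee}\times_{\bar{H}}\cdots\times_{\bar{H}}\mathbb{P}\mathcal{E}^{\vee}$. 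C'est un sous-fibré en droites de $q^*\mathcal{E}$, de fibre géométrique $\langle G_i\rangle$ au point $([F_1,\dots,F_c],\langle G_2\rangle,\dots,\langle G_c\rangle)$. En le composant avec la surjection $q^*\mathcal{E}\to q^*\mathcal{F}$ déduite de la colonne de gauche du diagramme (\ref{diagfaisc}), puis en rassemblant ces morphismes, on obtient un morphisme de fibrés vectoriels de rang $c-1$
$$\varphi:\bigoplus_{i=2}^{c}\mathcal{L}_i\longrightarrow q^*\mathcal{F},$$
dont la fibre géométrique envoie $\langle G_i\rangle$ sur l'image $\bar{G}_i$ de $G_i$ dans $\langle F_1,\dots,F_c\rangle/\langle F_1\rangle$. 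Son déterminant est une section $s$ du fibré en droites $L:=\mathcal{L}_2^{\vee}\otimes\cdots\otimes\mathcal{L}_c^{\vee}\otimes q^*\det\mathcal{F}$. Comme $\Sigma$ est irréductible et que $E\subsetneq\Sigma$ (lemme \ref{birat}), cette section n'est pas identiquement nulle.

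Je vérifierais ensuite deux identifications qui donneront la classe annoncée. D'une part, via l'immersion fermée $\mathbb{P}\mathcal{E}^{\vee}\hookrightarrow\bar{H}\times\bar{H}_{d_2}$ issue de $\mathcal{E}\to\pi_2^*\pi_1^*pr_*\mathcal{O}_{\mathbb{P}^N}(d_2)$, le fibré tautologique $\mathcal{L}_i$ s'identifie à l'image réciproque de $\mathcal{O}_{\bar{H}_{d_2}}(-1)$ du $i$-ième facteur, de sorte que $\mathcal{L}_i^{\vee}=i^*(\text{$i$-ième }\mathcal{O}_{\bar{H}_{d_2}}(1))$. D'autre part, $\mathcal{F}$ étant le sous-fibré tautologique de rang $c-1$ de la grassmannienne relative $\bar{H}$, le fibré de Plücker vérifie $\mathcal{O}_{\bar{H}}(0,1)=(\det\mathcal{F})^{\vee}$, d'où $q^*\det\mathcal{F}=i^*p_1^*\mathcal{O}_{\bar{H}}(0,-1)$. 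En combinant, on obtient $L=i^*\mathcal{O}(0,-1,1,\dots,1)$.

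Il resterait alors à montrer que le diviseur de Cartier effectif $\mathrm{div}(s)$ coïncide avec $E$ muni de sa structure réduite. Ensemblistement c'est immédiat : en un point géométrique, $\varphi$ dégénère (i.e.\ $s$ s'annule) si et seulement si $\bar{G}_2,\dots,\bar{G}_c$ sont linéairement dépendants dans $\langle F_1,\dots,F_c\rangle/\langle F_1\rangle$, c'est-à-dire si et seulement si $\langle F_1,G_2,\dots,G_c\rangle\subsetneq\langle F_1,\dots,F_c\rangle$ ; on retrouve bien la description des points de $E$. Comme $\Sigma$ est régulier (tour de fibrés projectifs au-dessus de $\bar{H}$, lui-même lisse sur $\Spec(\mathbb{Z})$), le diviseur de Cartier $\mathrm{div}(s)$ est de Cohen-Macaulay, donc $S_1$ ; puisque $E$ est irréductible (lemme \ref{EWirr}), il suffit donc de vérifier que $\mathrm{div}(s)$ est génériquement réduit, autrement dit que $s$ s'annule à l'ordre exactement $1$ le long de $E$.

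C'est le point principal, et je l'établirais par un calcul de transversalité en un point général de $E$. En un tel point, on peut supposer $\bar{G}_2,\dots,\bar{G}_{c-1}$ libres et $\bar{G}_c\in\langle\bar{G}_2,\dots,\bar{G}_{c-1}\rangle$. En fixant tous les facteurs sauf $\langle G_c\rangle$, que l'on fait varier dans sa fibre $\mathbb{P}(\langle F_1,\dots,F_c\rangle)$, le fibré $L$ se restreint à $\mathcal{O}(1)$ sur cette fibre, et $s$ s'y restreint en la forme linéaire non nulle coupant l'hyperplan $\{\bar{G}_c\in\langle\bar{G}_2,\dots,\bar{G}_{c-1}\rangle\}$ : elle s'annule donc à l'ordre $1$. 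Comme l'annulation à l'ordre $\geq 2$ de $s$ sur $\Sigma$ entraînerait l'annulation à l'ordre $\geq 2$ de sa restriction à cette sous-variété lisse, on en déduit que $s$ s'annule à l'ordre exactement $1$ en ce point général, donc au point générique de $E$. Ainsi $\mathrm{div}(s)$ est génériquement réduit, donc réduit, et finalement $\mathrm{div}(s)=E$, d'où $\mathcal{O}(E)=L=i^*\mathcal{O}(0,-1,1,\dots,1)$. La seule difficulté réelle est cette dernière vérification de réduaction générique ; tout le reste est formel une fois le morphisme $\varphi$ correctement identifié à partir des diagrammes (\ref{diagfaisc}) et (\ref{diagfibre}).
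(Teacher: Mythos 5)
Votre construction est, \`a peu de chose pr\`es, celle du texte : votre morphisme $\varphi$ est exactement le morphisme $\beta:\bigoplus_{k}\mathcal{L}_k\to q^*\mathcal{F}$ de la preuve originale, obtenu \`a partir des diagrammes (\ref{diagfaisc}) et (\ref{diagfibre}), avec les m\^emes identifications des $\mathcal{L}_k$ et de $\det(q^*\mathcal{F})=i^*\mathcal{O}(0,-1,0,\dots,0)$, et la m\^eme description ensembliste $\{s=0\}=E$. Vous divergez en revanche sur l'\'etape cruciale, celle qui exclut que $s$ s'annule \`a un ordre $k\geq 2$ le long de $E$. Le texte la traite par un argument de divisibilit\'e : $i^*\mathcal{O}(0,1,-1,\dots,-1)$ est le faisceau d'id\'eaux d'un diviseur de Cartier co\"incidant ensemblistement avec $E$ ; sur le sch\'ema r\'egulier $\Sigma$, \`a $E$ irr\'eductible (lemme \ref{EWirr}), ce diviseur s'\'ecrit $kE$ avec $k\geq 1$, et comme la classe $i^*\mathcal{O}(0,1,-1,\dots,-1)$ n'est pas divisible dans $\Pic(\Sigma)$ (groupe libre dont c'est un \'el\'ement primitif), n\'ecessairement $k=1$. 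Vous la traitez par transversalit\'e : en un point g\'en\'eral de $E$ (o\`u $\bar{G}_2,\dots,\bar{G}_{c-1}$ sont libres, ce que l'irr\'eductibilit\'e de $E$ autorise bien), la restriction de $s$ \`a la fibre $\mathbb{P}(\langle F_1,\dots,F_c\rangle)$ du dernier facteur est une forme lin\'eaire non nulle, donc d'ordre d'annulation exactement $1$, ce qui force $k=1$. Les deux arguments sont corrects. Celui du texte est plus court et purement formel, mais repose sur une propri\'et\'e sp\'eciale de cette situation (la primitivit\'e de la classe dans le groupe de Picard) ; le v\^otre est plus g\'eom\'etrique et plus robuste, puisqu'il fonctionnerait m\^eme pour une classe divisible, et il \'etablit en prime que $s$ s'annule transversalement le long de $E$. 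Signalons une petite simplification possible de votre fin de preuve : $\Sigma$ \'etant r\'egulier, il suffit d'\'ecrire le diviseur de Weil $\mathrm{div}(s)=kE$ et d'appliquer votre calcul d'ordre pour conclure ; le d\'etour par ``g\'en\'eriquement r\'eduit $+$ $S_1$ $\Rightarrow$ r\'eduit'' est correct mais superflu.
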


\begin{proof}[$\mathbf{Preuve}$]
La construction de $\Sigma$ comme produit de fibr\'es projectifs sur $\bar{H}$ fournit $c-1$ sous-faisceaux tautologiques
$\mathcal{L}_1,\dots,\mathcal{L}_{c-1}$ de $q^*\mathcal{E}$.
 On a
donc un morphisme $\bigoplus_{k=1}^{c-1}\mathcal{L}_k\rightarrow
q^*\mathcal{E}$ dont la fibre g\'eom\'etrique en $([F_1,\dots,F_c],\langle G_2\rangle,\dots,\langle G_c\rangle)$ est 
$\langle G_2\rangle\oplus\dots\oplus\langle G_c \rangle \rightarrow\langle F_1,\dots,F_c\rangle$.
Remarquons que, par compatibilit\'e entre les faisceaux tautologiques des fibr\'es projectifs $\mathbb{P}\mathcal{E}^{\vee}$
et $\bar{H}\times\bar{H}_{d_2}$
sur $\bar{H}$, on a $\mathcal{L}_1=i^*\mathcal{O}(0,0,-1,0\dots,0),\dots,\mathcal{L}_{c-1}=i^*\mathcal{O}(0,0,\dots,0,-1)$.

D'autre part, en tirant en arri\`ere sur $\Sigma$ le
morphisme $\mathcal{E}\rightarrow\mathcal{F}$ du diagramme
(\ref{diagfaisc}), on obtient un morphisme
$q^*\mathcal{E}\rightarrow q^*\mathcal{F}$ dont la fibre g\'eom\'etrique en
$([F_1,\dots,F_c],\langle G_2\rangle,\dots,\langle G_c\rangle)$ est $\left\langle F_1,\dots,F_c
\right\rangle\rightarrow\left\langle F_1,\dots,F_c
\right\rangle/\left\langle F_1 \right\rangle$ par le diagramme (\ref{diagfibre}). Notons
$\beta:\bigoplus_{k=1}^{c-1}\mathcal{L}_k\rightarrow q^*\mathcal{F}$ la
compos\'ee de ces deux morphismes de faisceaux.

Les fibres
$\beta_{([F_1,\dots,F_c],\langle G_2\rangle,\dots,\langle G_c\rangle)}:
\langle G_2\rangle\oplus\dots\oplus\langle G_c \rangle \rightarrow\langle F_1,\dots,F_c
\rangle/\langle F_1 \rangle$ de $\beta$ 
sont des isomorphismes exactement si $([F_1,\dots,F_c],\langle G_2\rangle,\dots,\langle G_c\rangle)\notin E(K)$.
On en d\'e\-duit que $\det(\beta)$ est une
injection, et que son conoyau $\mathcal{K}$ a pour support un
sous-sch\'ema ferm\'e de $\Sigma$ dont la r\'eduction
est $E$.

On remarque alors que $\det(\bigoplus_{k=1}^{c-1}\mathcal{L}_k)=i^*\mathcal{O}(0,0,-1,\dots,-1)$
et que, par d\'efinition du fibr\'e de Pl\"ucker, 
$\det(q^*\mathcal{F})=i^*\mathcal{O}(0,-1,0,\dots,0)$.
Tensorisant par $i^*\mathcal{O}(0,1,0,\dots,0)$, on obtient :
$$0\rightarrow i^*\mathcal{O}(0,1,-1,\dots,-1)\rightarrow
\mathcal{O}_{\Sigma}\rightarrow\mathcal{K}\otimes
i^*\mathcal{O}(0,1,0,\dots,0)\rightarrow 0.$$
 Le
fibr\'e en droites $i^*\mathcal{O}(0,1,-1,\dots,-1)$ s'identifie ainsi au
faisceau d'id\'eaux d'un diviseur de Cartier $D$ de
$\Sigma$ qui co\"incide ensemblistement avec $E$.

Le sous-sch\'ema $E$ est donc le diviseur de Cartier r\'eduit associ\'e \`a $D$ sur le sch\'ema r\'egulier $\Sigma$.
Comme, par le lemme \ref{EWirr}, $E$ est irr\'eductible, il existe $k\geq1$ tel que
$i^*\mathcal{O}(0,1,-1,\dots,-1)=\mathcal{O}(-kE)$. Or la description de $\Sigma$ comme produit de fibr\'es projectifs montre que
$i^*\mathcal{O}(0,1,-1,\dots,-1)$ n'est pas divisible dans
$\Pic(\Sigma)$. On a donc n\'ecessairement $k=1$, et
$\mathcal{O}(E)=i^*\mathcal{O}(0,-1,1,\dots,1)$.
\end{proof}

Les calculs que nous m\`enerons par la suite n\'ecessitent
d'avoir des \'equations au moins ensemblistes pour $W$.
C'est l'objet de la proposition \ref{eqZ}. 

\begin{prop}\label{eqZ}

Soit $(\langle F_1\rangle,\langle G_2\rangle,\dots,\langle G_c\rangle)$ un point g\'eom\'etrique de
$\bar{H}_{d_1}\times\bar{H}_{d_2}^{c-1}$ n'appartenant pas \`a
$W$. Alors il existe un diviseur
$D\in|\mathcal{O}_{\bar{H}_{d_1}\times\bar{H}_{d_2}^{c-1}}((c-1)(d_2-d_1)+1,1,\dots,1)|$
contenant $W$ mais pas $(\langle F_1\rangle,\langle G_2\rangle,\dots,\langle G_c\rangle)$.
\end{prop}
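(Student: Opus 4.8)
The plan is to realise the divisor $D$ as the zero locus of an explicit section, built so that vanishing on $W$ is forced by a determinantal identity. Write $B=H^0(\mathbb{P}^N_K,\mathcal{O}(d_2))$ and $A=H^0(\mathbb{P}^N_K,\mathcal{O}(d_2-d_1))$, so that $\langle F_1\rangle=F_1\cdot A\subseteq B$ and $W$ is the locus where $\sum_j\nu_jG_j\in F_1A$ for some $\nu\neq0$. Given functionals $\phi_2(F_1),\dots,\phi_c(F_1)\in B^{\vee}$ depending polynomially on $F_1$ (of degrees $e_2,\dots,e_c$) and annihilating $F_1A$, I set $s=\det\big(\phi_i(F_1)(G_j)\big)_{2\le i,j\le c}$. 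This is linear in each $G_j$ and of degree $\sum_i e_i$ in $F_1$, hence lies in $H^0(\bar H_{d_1}\times\bar H_{d_2}^{c-1},\mathcal{O}(\textstyle\sum_i e_i,1,\dots,1))$; and it vanishes on $W$, since a relation $\sum_j\nu_jG_j=F_1P$ gives $\sum_j\nu_j\phi_i(F_1)(G_j)=\phi_i(F_1)(F_1P)=0$ for every $i$, a linear relation among the columns. The target multidegree $(c-1)(d_2-d_1)+1$ thus amounts to requiring $\sum_i e_i=(c-1)(d_2-d_1)+1$.

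The condition ``$\phi_i$ has degree $e_i$ in $F_1$ and annihilates $F_1A$'' says exactly that $\phi_i$ is a global section of $\mathcal{V}^{\vee}\otimes\mathcal{O}_{\bar H_{d_1}}(e_i)$, where $\mathcal{V}=pr_{1*}\mathcal{O}_{\bar{\mathcal{X}}_{d_1}}(d_2)$ and $\mathcal{V}^{\vee}=\ker\!\big(B^{\vee}\to\mathcal{O}(1)\otimes A^{\vee}\big)$ is read off from the dual of (\ref{faisceau1}) (with $d=d_1$, $l=d_2$). Pairing $\phi_2\wedge\dots\wedge\phi_c$ with the Plücker vector of $\Pi=\langle\bar G_2,\dots,\bar G_c\rangle$ recovers $s$, so these are precisely the \emph{decomposable} global sections of $\mathcal{O}_{\bar H_{d_1}}(\sum_ie_i)\otimes\bigwedge^{c-1}\mathcal{V}^{\vee}$. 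The whole statement can in fact be transported to $\bar H$: by Proposition \ref{E} one has $e^{*}\mathcal{O}(a,1,\dots,1)\otimes\mathcal{O}(-E)=q^{*}\mathcal{O}_{\bar H}(a,1)$, and since $e$ is birational onto the regular $\bar H_{d_1}\times\bar H_{d_2}^{c-1}$ with $e^{-1}(W)=E$ (Lemmes \ref{EWirr} and \ref{birat}), a section of $\mathcal{O}(a,1,\dots,1)$ vanishes on $W$ iff it comes from $H^0(\bar H,\mathcal{O}(a,1))=H^0(\bar H_{d_1},\mathcal{O}(a)\otimes\bigwedge^{c-1}\mathcal{V}^{\vee})$, and it avoids $p$ iff the corresponding section is nonzero on $\bar G_2^{0}\wedge\dots\wedge\bar G_c^{0}$. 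Thus the Proposition is equivalent to: for $a=(c-1)(d_2-d_1)+1$, the bundle $\mathcal{O}_{\bar H_{d_1}}(a)\otimes\bigwedge^{c-1}\mathcal{V}^{\vee}$ has at every point a global section not annihilating the relevant decomposable vector (equivalently, $\mathcal{O}_{\bar H}(a,1)$ is base-point-free).

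To exhibit the functionals I would restrict $F_1$ and the $G_j$ to a general line $\ell\cong\mathbb{P}^1$ and use the last subresultant of the binary forms $F_1|_\ell$ and $G|_\ell$: it is linear in $G$, of degree $d_2-d_1+1$ in $F_1$, and vanishes exactly when $F_1|_\ell\mid G|_\ell$, hence annihilates $F_1A$; so it is a section of $\mathcal{V}^{\vee}\otimes\mathcal{O}(d_2-d_1+1)$. For $p\notin W$ a good choice of line makes the associated determinant nonzero at $p$. This already produces a divisor of multidegree $((c-1)(d_2-d_1+1),1,\dots,1)$ containing $W$ and avoiding $p$.

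The hard part is the \emph{sharp} exponent $(c-1)(d_2-d_1)+1$ rather than the naive $(c-1)(d_2-d_1+1)$ just obtained. Since the smallest twist for which $\mathcal{V}^{\vee}$ acquires a nonzero global section is $d_2-d_1+1$ (this is the case $c=2$, reflecting that below it the relevant symmetrised multiplication $\mathrm{Sym}^{\bullet}H^0(\mathcal{O}(d_1))\otimes A\to\mathrm{Sym}^{\bullet}H^0(\mathcal{O}(d_1))\otimes B$ is onto), products of such sections cannot beat $(c-1)(d_2-d_1+1)$; gaining the remaining $c-2$ degrees forces the use of \emph{non-decomposable} sections of $\bigwedge^{c-1}\mathcal{V}^{\vee}(a)$. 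To produce them and prove base-point-freeness at $a=(c-1)(d_2-d_1)+1$, I would resolve $\bigwedge^{c-1}\mathcal{V}^{\vee}$ by the complex attached to $0\to\mathcal{V}^{\vee}\to B^{\vee}\to\mathcal{O}(1)\otimes A^{\vee}\to0$ and reduce global generation to the surjectivity of explicit multiplication maps on $\bar H_{d_1}$, the factor $d_2-d_1$ entering through these maps rather than through the (small) $\mathcal{O}(j)$-twists of the resolution; pinning the exact threshold is the crux. A second, genuine difficulty is separating $p$ when $F_1^{0}$ is non-reduced: one must check that the constructed sections have common zero locus exactly $W$, i.e. that they detect divisibility by the \emph{scheme} $\{F_1^{0}=0\}$ and not merely its support — a Bertini-type transversality statement for restriction to general lines.
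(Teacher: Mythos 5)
Your determinantal framework is sound --- it is essentially the mechanism of the paper's own proof --- and your subresultant construction does produce a divisor of multidegree $((c-1)(d_2-d_1+1),1,\dots,1)$ containing $W$ and avoiding the given point; when $c=2$ this coincides with the required multidegree, so in that case your argument would be complete modulo the transversality lemma for restriction to general lines that you defer. But for $c\geq 3$ you never reach the stated exponent $(c-1)(d_2-d_1)+1$: precisely at the sharpening, the proposal stops being a proof and becomes a programme (non-decomposable sections of $\bigwedge^{c-1}\mathcal{V}^{\vee}(a)$, a Koszul-type resolution, surjectivity of multiplication maps), with the admission that ``pinning the exact threshold is the crux''. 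This is a genuine gap, and not a cosmetic one: the exponent $(c-1)(d_2-d_1)+1$ is exactly the nef threshold of Proposition \ref{nef}, so your weaker exponent would only yield nefness of $\mathcal{O}(l_1,l_2)$ for $l_1\geq l_2(c-1)(d_2-d_1+1)$, leaving a gap against the necessary condition of Step 1, and would propagate into weaker versions of Theorem \ref{ample}, Proposition \ref{stablisse} and Theorem \ref{edm} (ii). Note also that your reformulation on $\bar{H}$ (base-point-freeness of $\mathcal{O}_{\bar{H}}((c-1)(d_2-d_1)+1,1)$), while correct, merely restates the problem: in the paper the positivity properties of these bundles on $\bar{H}$ are \emph{deduced from} Proposition \ref{eqZ}, not the other way around.

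The missing idea is that the extra $c-2$ degrees are gained not by constructing new sections, but by an elementary divisibility observation applied to the very determinant you already have. The paper normalizes $F_1$ so that its coefficient on $X_0^{d_1}$ equals $1$, and replaces your restriction-to-a-line functionals by coefficient extraction after an explicit Euclidean division (Lemme \ref{division}): $a_{X_0^{d_1}}^{d_2-d_1+1}g^{(i)}=q^{(i)}f+r^{(i)}$, where moreover every monomial of the remainder $r^{(i)}$ is divisible either by $a_{X_0^{d_1}}$ or by $b^{(i)}_{X_0^{d_2}}$. Writing $r^{(i)}=a_{X_0^{d_1}}T^{(i)}+b^{(i)}_{X_0^{d_2}}S$, homogeneity forces $S$ to involve no $b$-variables, hence to be the \emph{same} for all $i$. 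Consequently, in the $(c-1)\times(c-1)$ matrix of coefficients whose determinant is your section $P$ of $\mathcal{O}((c-1)(d_2-d_1+1),1,\dots,1)$, each row is the sum of a row divisible by $a_{X_0^{d_1}}$ and a scalar multiple of one fixed row; expanding the determinant, every term using two of the proportional rows vanishes, so $P$ is divisible by $a_{X_0^{d_1}}^{c-2}$. The quotient $\tilde{P}=P/a_{X_0^{d_1}}^{c-2}$ is a section of $\mathcal{O}((c-1)(d_2-d_1)+1,1,\dots,1)$, is still nonzero at the given point, and still vanishes on $W$ because $W$ is integral (Lemme \ref{EWirr}) and not contained in $\{a_{X_0^{d_1}}=0\}$. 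This construction also disposes of your second worry (non-reduced $F_1$, Bertini for lines): no lines are used, and the vanishing on $W$ is only ever checked on the dense open subset of $W$ where $a_{X_0^{d_1}}\neq 0$, integrality of $W$ doing the rest. Incidentally, the existence of $\tilde{P}$ confirms your heuristic in one respect --- $\tilde{P}$ is not a determinant of lower-degree functionals --- but refutes the conclusion you drew from it: one does not need to manufacture non-decomposable sections by abstract means; dividing the decomposable determinant by a monomial is all it takes.
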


\begin{proof}[$\mathbf{Preuve}$]
Soit $(X_0,\dots,X_N)$ un syst\`eme de coordonn\'ees sur
$\mathbb{P}^N$, c'est-\`a-dire une base de
$H^0(\mathbb{P}^N,\mathcal{O}(1))$. On note $\mathfrak{M}_d$
l'ensemble des mon\^omes de degr\'e $d$ en les $X_i$ : c'est une
base de $H^0(\mathbb{P}^N,\mathcal{O}(d))$. On obtient des
coordonn\'ees sur les espaces projectifs $\bar{H}_{d_1}$ et
$\bar{H}_{d_2}$ en consid\'erant les bases duales
$(a_L)_{L\in\mathfrak{M}_{d_1}}$ et $(b_M^{(i)})_{M\in\mathfrak{M}_{d_2}}$
de
$H^0(\mathbb{P}^N,\mathcal{O}(d_1))^{\vee}=H^0(\bar{H}_{d_1},\mathcal{O}(1))$
et
$H^0(\mathbb{P}^N,\mathcal{O}(d_2))^{\vee}=H^0(\bar{H}_{d_2},\mathcal{O}(1))$,
o\`u l'exposant $i$ ($2\leq i\leq c$) permet de distinguer les coordonn\'ees sur les $c-1$ copies de $\bar{H}_{d_2}$.
On choisit notre syst\`eme de coordonn\'ees de sorte que $F_1$ ait
un coefficient non nul en $X_0^{d_1}$, qu'on peut alors supposer
\'egal \`a $1$.

 Soit $2\leq i\leq c$. Consid\'erons l'identit\'e
\begin{equation}
a_{X_0^{d_1}}^{d_2-d_1+1}g^{(i)}=q_{d_2-d_1+1}^{(i)}f+r_{d_2-d_1+1}^{(i)}
\label{diveucl}
\end{equation}
obtenue en substituant la variable $b^{(i)}_M$ \`a la variable $b_M$ dans l'identit\'e fournie par le lemme \ref{division} ci-dessous. 
Substituant alors
les coefficients de $F_1$ dans
les $a_L$ et les coefficients de $G_i$ dans les $b^{(i)}_M$, on obtient
une \'egalit\'e de la forme $G_i=Q_iF_1+R_i$ dans $K[X_0,\dots, X_N]$. Comme
$(\langle F_1\rangle,\langle G_2\rangle,\dots,\langle G_c\rangle)\notin W$, les $R_i$ forment une famille libre. 
On peut donc trouver $(M_j)_{2\leq j\leq c}$ des mon\^omes de $\mathfrak{M}_{d_2}$ 
tels que la matrice $(c-1)\times(c-1)$ dont le coefficient $(i,j)$ est le coefficient de $M_j$ dans $R_i$ soit inversible.

On note $C^{(i)}_j\in\mathbb{Z}[a_L,b^{(i)}_M]_{L\in\mathfrak{M}_{d_1},M\in\mathfrak{M}_{d_2}}$ le coefficient de $M_j$
dans $r_{d_2-d_1+1}^{(i)}$.
Alors $P=\det(C^{(i)}_j)$
est un polyn\^ome homog\`ene de degr\'e
$(c-1)(d_2-d_1+1)$ en les $a_L$ et, pour tout $i\in\{2,\dots,c-1\}$, de degr\'e $1$ en les $b^{(i)}_M$. On voit
$P$ comme une section de $\mathcal{O}_{\bar{H}_{d_1}\times\bar{H}_{d_2}^{c-1}}((c-1)(d_2-d_1+1),1,\dots,1)$. Par choix des $M_j$,
$P$ est non nul en $(\langle F_1\rangle,\langle G_2\rangle,\dots,\langle G_c\rangle)$.

  Montrons que $\{P=0\}$ contient $W$. Comme, par le lemme \ref{EWirr}, $W$ est int\`egre,
il suffit de voir que $\{P=0\}$ contient les points g\'eom\'etriques de
l'ouvert dense de $W$ d\'efini par l'\'equation
$a_{X_0^{d_1}}\neq0$. Soit donc $(\langle F_1'\rangle,\langle G_2'\rangle,\dots,\langle G_c'\rangle)$ un point
g\'eom\'etrique de $W$ tel que le coefficient en $X_0^{d_1}$ de
$F'_1$ vale $1$. Comme $(\langle F_1'\rangle,\langle G_2'\rangle,\dots,\langle G_c'\rangle)\in W$, il existe une \'equation de la forme
$\sum_{i=2}^c\lambda_iG'_i=QF'_1$ avec $Q\in K[X_0,\dots, X_N]$ et $\lambda_i\in K$ non tous nuls.
 Pour $2\leq i\leq c$, en substituant dans l'\'egalit\'e (\ref{diveucl}) les coefficients de $F'_1$
dans les $a_L$ et les coefficients de $G'_i$ dans les $b^{(i)}_M$, on
obtient des \'egalit\'es de la forme $G'_i=Q'_iF'_1+R'_i$. Il vient
$\sum_{i=2}^c\lambda_i R'_i=(Q-\sum_{i=2}^c\lambda_i Q'_i)F_1'$. Comme aucun des mon\^omes des $R'_i$ n'est divisible
par $X_0^{d_1}$ et que le coefficient en $X_0^{d_1}$ de $F_1'$ est
non nul, on a n\'ecessairement $Q-\sum_{i=2}^c\lambda_i Q'_i=0$, donc $\sum_{i=2}^c\lambda_i R'_i=0$. Par
cons\'equent, $P(F'_1,G'_2,\dots, G_c')$ est le d\'eterminant d'une matrice dont les lignes sont li\'ees, et est nul.
Ceci montre que $(\langle F_1'\rangle,\langle G_2'\rangle,\dots,\langle G_c'\rangle)\in \{P=0\}$.
 
  Enfin, remarquons que $P$ est divisible par $a_{X_0^{d_1}}^{c-2}$.
Pour cela, utilisons la derni\`ere partie du lemme \ref{division} : on a une 
identit\'e de la forme $r_{d_2-d_1+1}=a_{X_0^{d_1}}T+b_{X_0^{d_2}}S$.
Par homog\'en\'eit\'e, $S$ ne d\'epend pas des variables $(b_M)_{M\in\mathfrak{M}_{d_2}}$,
de sorte qu'on obtient, pour $2\leq i\leq c$ des identit\'es de la forme
$r^{(i)}_{d_2-d_1+1}=a_{X_0^{d_1}}T^{(i)}+b^{(i)}_{X_0^{d_2}}S$. Ces expressions montrent
que, dans la matrice $(C^{(i)}_j)$, chaque ligne est somme de deux termes :
les premiers divisibles par $a_{X_0^{d_1}}$, les seconds tous proportionnels. 
D\'evelopper le d\'eterminant montre que $P$ est divisible par $a_{X_0^{d_1}}^{c-2}$.

Posons alors $\tilde{P}=P/a_{X_0^{d_1}}^{c-2}$ : c'est une section de
$\mathcal{O}_{\bar{H}_{d_1}\times\bar{H}_{d_2}^{c-1}}((c-1)(d_2-d_1)+1,1,\dots,1)$.
Comme $P$ est non nul en $(\langle F_1\rangle,\langle G_2\rangle,\dots,\langle G_c\rangle)$,
c'est aussi le cas de $\tilde{P}$. Comme $W\subset\{P=0\}$, que $W$ est int\`egre par le lemme \ref{EWirr},
et que $W$ n'est pas inclus dans $\{a_{X_0^{d_1}}=0\}$, $W\subset\{\tilde{P}=0\}$. On a montr\'e que $D=\{\tilde{P}=0\}$ convenait.
\end{proof}

\begin{lemme}\label{division}
 On se place dans l'anneau
$$A=\mathbb{Z}[X_s,a_L,b_M]_{0\leq s\leq N, L\in\mathfrak{M}_{d_1},
M\in\mathfrak{M}_{d_2}}$$
trigradu\'e par le degr\'e total en les
$X_i$, en les $a_L$ et les $b_M$. On consid\`ere les \'el\'ements
$f=\sum_{L\in\mathfrak{M}_{d_1}}a_LL$ et
$g=\sum_{M\in\mathfrak{M}_{d_2}}b_MM$ de $A$. Alors, si $0\leq
j\leq d_2-d_1+1$, il existe $q_j, r_j\in A$ homog\`enes de degr\'es
respectifs $(d_2-d_1,j-1,1)$ et $(d_2,j,1)$, tels qu'aucun mon\^ome
de $r_j$ ne soit divisible par $X_0^{d_2+1-j}$ et que
$$a_{X_0^{d_1}}^jg=q_jf+r_j.$$

De plus, si $j\geq 1$, tout mon\^ome intervenant dans $r_j$ est divisible soit par $a_{X_0^{d_1}}$ soit par
$b_{X_0^{d_2}}$.
\end{lemme}

\begin{proof}[$\mathbf{Preuve}$]
L'existence de $q_j$ et $r_j$ r\'esulte de l'algorithme de division euclidienne.

Plus pr\'ecis\'ement, on raisonne par r\'ecurrence sur $j$. Si
$j=0$, on prend $q_0=0$ et $r_0=g$. Pour passer de l'\'egalit\'e
pour $j$ \`a celle pour $j+1$, on multiplie par $a_{X_0^{d_1}}$, on
regroupe dans $a_{X_0^{d_1}}r_j$ les termes divisibles par
$X_0^{d_2-j}$, et on r\'e\'ecrit ces termes en utilisant
l'identit\'e :
$$a_{X_0^{d_1}}X_0^{d_2-j}=X_0^{d_2-d_1-j}f+X_0^{d_2-d_1-j}(a_{X_0^{d_1}}X_0^{d_1}-f).$$
Cette construction explicite permet facilement de v\'erifier la derni\`ere propri\'et\'e par r\'ecurrence sur $j$. 
\end{proof}

\paragraph{Calcul des fibr\'es amples.}~
On peut \`a pr\'esent montrer :
\begin{thm}\label{ample}
Le fibr\'e $\mathcal{O}(l_1,l_2)$ sur $\bar{H}$ est
ample si et seulement si $l_2>0$ et $\frac{l_1}{l_2}>(c-1)(d_2-d_1)+1$.
\end{thm}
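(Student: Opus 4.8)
The plan is to determine the ample cone of $\bar{H}$ inside $\Pic(\bar{H})\otimes\mathbb{R}=\mathbb{R}\,\mathcal{O}(1,0)\oplus\mathbb{R}\,\mathcal{O}(0,1)$, which has rank $2$. Writing $a=(c-1)(d_2-d_1)+1$, I expect the two boundary rays of the nef cone to be spanned by $\mathcal{O}(1,0)$ and $\mathcal{O}(a,1)$, so that the ample cone is the interior of the cone they span, namely $\{l_2>0,\ l_1>al_2\}$. The computational engine will be the identity on $\Sigma$
\[
q^{*}\mathcal{O}(l_1,l_2)=e^{*}\mathcal{O}(l_1,l_2,\dots,l_2)\otimes\mathcal{O}(-l_2E),
\]
which I read off from Proposition \ref{E} together with $q^{*}\mathcal{O}(l_1,l_2)=i^{*}\mathcal{O}(l_1,l_2,0,\dots,0)$ and $e^{*}\mathcal{O}(n_1,\dots,n_c)=i^{*}\mathcal{O}(n_1,0,n_2,\dots,n_c)$. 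Since $q$ is a product of projective bundles, $q$ is surjective and proper with $q_{*}\mathcal{O}_{\Sigma}=\mathcal{O}_{\bar{H}}$; hence a class on $\bar{H}$ is nef if and only if its $q$-pullback is, and $H^{0}(\bar{H},\mathcal{O}(l_1,l_2))=H^{0}(\Sigma,q^{*}\mathcal{O}(l_1,l_2))$. Set $P=\bar{H}_{d_1}\times\bar{H}_{d_2}^{c-1}$; I will work on geometric fibres, where Kleiman's criterion applies.

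First I would treat the easy ray. As $\mathcal{O}(1,0)=\pi_2^{*}\mathcal{O}(1)$ is the pullback of an ample bundle, it is nef; restricting to a fibre of $\pi_2$ (a Grassmannian, on which $\mathcal{O}(1,0)$ is trivial while $\mathcal{O}(0,1)$ is the ample Plücker bundle) shows it is not ample and that any ample $\mathcal{O}(l_1,l_2)$ has $l_2>0$. This also produces a class $z_1\in\overline{NE}(\bar{H})\setminus\{0\}$ with $\mathcal{O}(1,0)\cdot z_1=0$ and $\mathcal{O}(0,1)\cdot z_1>0$.

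Next I would prove nef-ness of $\mathcal{O}(a,1)$. At $(l_1,l_2)=(a,1)$ the identity reads $e^{*}\mathcal{O}(a,1,\dots,1)=q^{*}\mathcal{O}(a,1)\otimes\mathcal{O}(E)$. Given a divisor $D\in|\mathcal{O}(a,1,\dots,1)|$ containing $W$ (Proposition \ref{eqZ}), since $E=e^{-1}(W)$ (Lemmas \ref{EWirr} and \ref{birat}) the divisor $e^{*}D$ contains $E$, so $D':=e^{*}D-E$ is an effective divisor in $|q^{*}\mathcal{O}(a,1)|$ which, over a point off $W$, coincides locally with $e^{*}D$. I then claim $\mathcal{O}(a,1)$ is base-point free on $\bar{H}$: a point $x$ could be a base point only if every section vanished on the whole fibre $q^{-1}(x)$, but $e$ embeds $q^{-1}(x)\cong(\mathbb{P}^{c-1})^{c-1}$ into $P$ with image not contained in $W$ (a generic $(c-1)$-plane of $\langle F_1,\dots,F_c\rangle$ meets $\langle F_1\rangle$ trivially), so $q^{-1}(x)\not\subseteq E$; choosing $D$ to avoid the image of a point $y\in q^{-1}(x)\setminus E$ via Proposition \ref{eqZ} yields a section of $q^{*}\mathcal{O}(a,1)$ nonzero at $y$. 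Hence $\mathcal{O}(a,1)$ is globally generated, in particular nef, so the nef cone contains the cone spanned by $\mathcal{O}(1,0)$ and $\mathcal{O}(a,1)$.

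With both boundary classes nef, sufficiency will follow from Kleiman's criterion: writing $\mathcal{O}(l_1,l_2)=(l_1-al_2)\,\mathcal{O}(1,0)+l_2\,\mathcal{O}(a,1)$, for $l_1>al_2>0$ this is a strictly positive combination of two nef classes, and since $\mathcal{O}(1,0),\mathcal{O}(a,1)$ form a basis of $N^{1}(\bar{H})_{\mathbb{R}}$ no nonzero class of $\overline{NE}(\bar{H})$ annihilates both, so $\mathcal{O}(l_1,l_2)$ is positive on $\overline{NE}(\bar{H})\setminus\{0\}$ and hence ample. For necessity it then remains only to show that $\mathcal{O}(a,1)$ is \emph{not} ample, i.e.\ to produce $z_2\in\overline{NE}(\bar{H})\setminus\{0\}$ with $\mathcal{O}(a,1)\cdot z_2=0$; for such $z_2$ one has $\mathcal{O}(1,0)\cdot z_2>0$ (else $z_2=0$), and testing $\mathcal{O}(l_1,l_2)\cdot z_2=(l_1-al_2)(\mathcal{O}(1,0)\cdot z_2)$ against an ample class forces $l_1>al_2$, finishing the equivalence. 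I would realise $z_2$ as $C=q(\tilde\ell)$, where $\ell\subset P$ is a curve meeting $W$ of positive degree over $\bar{H}_{d_1}$ and $\tilde\ell\subset\Sigma$ is its strict transform under the birational morphism $e$ of Lemma \ref{birat}; then $\mathcal{O}(1,0)\cdot C>0$ and, by the identity, $\mathcal{O}(a,1)\cdot C=\mathcal{O}(a,1,\dots,1)\cdot\ell-E\cdot\tilde\ell$. The main obstacle is exactly this last intersection number: I must choose $\ell$ so that $E\cdot\tilde\ell$, governed by Proposition \ref{E} through the class $i^{*}\mathcal{O}(0,-1,1,\dots,1)$, balances $\mathcal{O}(a,1,\dots,1)\cdot\ell$ precisely. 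This is where the sharp value $a=(c-1)(d_2-d_1)+1$ coming from Proposition \ref{eqZ} must enter, and the bookkeeping of the multiplicities of $\tilde\ell$ along $E$ is the delicate point I expect to require the most care.
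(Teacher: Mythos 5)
Your framework coincides with the paper's: reduce to geometric fibres, determine the nef cone, conclude by Kleiman's criterion, with all computations transported to $\Sigma$ via the identity $q^{*}\mathcal{O}(l_1,l_2)=e^{*}\mathcal{O}(l_1,l_2,\dots,l_2)\otimes\mathcal{O}(-l_2E)$ coming from Proposition \ref{E}. Your sufficiency half is complete and correct, and is a genuine (slightly stronger) variant of the paper's argument: where the paper, in the second step of the proof of Proposition \ref{nef}, lifts each integral curve of $\bar{H}$ to $\Sigma$ by a rational section of $q$ and intersects it with $e^{*}D-E$, you show that $\mathcal{O}((c-1)(d_2-d_1)+1,1)$ is in fact base-point free, using Proposition \ref{eqZ} at each point, the effectivity of $e^{*}D-E$ (valid because $E$ is a prime Cartier divisor contained in the support of $e^{*}D$ on the regular scheme $\Sigma$), and descent of sections through $q_{*}\mathcal{O}_{\Sigma}=\mathcal{O}_{\bar{H}}$. (One harmless slip: $q^{-1}(x)$ is a product of projective spaces of dimension $\tbinom{N+d_2-d_1}{N}+c-2$, not $(\mathbb{P}^{c-1})^{c-1}$; your claim $q^{-1}(x)\not\subset E$ remains clear, e.g. take $\langle G_i\rangle=\langle F_i\rangle$.)

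The necessity half, however, has a genuine gap. You correctly reduce it to exhibiting a nonzero class $z_2\in\overline{NE}(\bar{H})$ on which $\mathcal{O}(a,1)$ is nonpositive, where $a=(c-1)(d_2-d_1)+1$, but the curve realizing $z_2$ is exactly what you leave unconstructed, deferring it as "the delicate point". That construction is the actual content of the first step of the proof of Proposition \ref{nef}: one maps $\mathbb{P}^1$ to $\bar{H}_{d_1}\times\bar{H}_{d_2}^{c-1}$ by $t\mapsto(\langle H(X_0+tX_1)\rangle,\langle G_2\rangle,\dots,\langle G_c\rangle)$, with $H$ a fixed form of degree $d_1-1$ and $G_i=HX_{i-1}\prod_{j=1}^{d_2-d_1}(X_0+\lambda^{(i)}_jX_1)$ for pairwise distinct scalars $\lambda^{(i)}_j$; an elementary divisibility argument then shows that exactly $(c-1)(d_2-d_1)+1$ values of $t$ (the $\lambda^{(i)}_j$ together with $t=\infty$) land in $W$. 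Note also that the multiplicity bookkeeping you anticipate is a red herring: since the $\bar{H}_{d_2}$-components of $\ell$ are constant, $\mathcal{O}(a,1,\dots,1)\cdot\ell=a$, while each point of $\tilde{\ell}\cap E$ contributes at least $1$ to the degree of the pullback of $\mathcal{O}(E)$ to $\tilde{\ell}$, so the crude bound $E\cdot\tilde{\ell}\geq\Card(\ell\cap W)=a$ already yields $\mathcal{O}(a,1)\cdot z_2\leq 0$; only a lower bound, i.e. the count of points of $\ell$ falling in $W$, is needed. But producing a curve with at least $a$ such points (and degree $1$ over $\bar{H}_{d_1}$, degree $0$ over the other factors) is where the constant $a$ genuinely arises, and without it the "only if" direction of the theorem is unproven.
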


\begin{proof}[$\mathbf{Preuve}$]
Comme $\Spec(\mathbb{Z})$ est affine, $\mathcal{O}(l_1,l_2)$ est ample si et seulement s'il est ample
relativement \`a $\Spec(\mathbb{Z})$. Par \cite{EGA31} 4.7.1, il suffit de tester
l'amplitude de $\mathcal{O}(l_1,l_2)$ sur les fibres
du morphisme structurel, donc sur les fibres g\'eom\'etriques du morphisme structurel.
La proposition est alors cons\'equence de la proposition \ref{nef}
ci-dessous et du crit\`ere de Kleiman : pour une vari\'et\'e
projective sur un corps alg\'ebriquement clos, le c\^one ample est
l'int\'erieur du c\^one nef (\cite{LazarsfeldI}, 1.4.23).
\end{proof}

\begin{prop}\label{nef}
Soit $K$ un corps alg\'ebriquement clos. Alors le fibr\'e
$\mathcal{O}(l_1,l_2)$ sur $\bar{H}\times_{\mathbb{Z}}
K$ est nef si et seulement si $l_2\geq0$ et $l_1\geq
l_2((c-1)(d_2-d_1)+1)$.
\end{prop}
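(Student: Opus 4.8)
The plan is to use that $\Pic(\bar H\times_{\mathbb Z}K)$ has rank $2$, generated by $\mathcal O(1,0)$ and $\mathcal O(0,1)$, so the nef cone is a $2$-dimensional pointed closed convex cone, and proving the proposition amounts to identifying its two extremal rays. Set $k=(c-1)(d_2-d_1)+1$. The asserted inequalities describe exactly the cone spanned by $\mathcal O(1,0)$ and $\mathcal O(k,1)$, because $\mathcal O(l_1,l_2)=(l_1-kl_2)\,\mathcal O(1,0)+l_2\,\mathcal O(k,1)$ in $\Pic\otimes\mathbb R$, with both coefficients $\ge0$ under the hypotheses. Since nefness is preserved under non-negative sums, the inclusion ``$\supseteq$'' follows once I show the two generators are nef. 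Now $\mathcal O(1,0)=\pi_2^*\mathcal O_{\bar H_{d_1}}(1)$ is the pullback of an ample bundle, hence nef; the whole difficulty is to prove that $\mathcal O(k,1)$ is nef.

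For this I would exploit the diagram relating $\bar H$ to $Y:=\bar H_{d_1}\times\bar H_{d_2}^{c-1}$ through $\Sigma$. Combining Proposition \ref{E} with $\pi_2^*\mathcal O_{\bar H_{d_1}}(1)=\mathcal O(1,0)$ gives the key identity of line bundles on $\Sigma$,
\[ q^*\mathcal O(k,1)=e^*\mathcal O_Y(k,1,\dots,1)\otimes\mathcal O_\Sigma(-E). \]
Given a geometric point of $Y$ off $W$, Proposition \ref{eqZ} produces a section $s$ of $\mathcal O_Y(k,1,\dots,1)$ vanishing on $W$ but not at that point. As $e^{-1}(W)=E$ set-theoretically, $e^*s$ vanishes along the integral divisor $E$ (Lemme \ref{EWirr}), and since $\Sigma$ is regular, dividing $e^*s$ by the canonical section of $\mathcal O(E)$ yields a section $\tilde s$ of $q^*\mathcal O(k,1)$, i.e. (because $q_*\mathcal O_\Sigma=\mathcal O_{\bar H}$) a section of $\mathcal O(k,1)$ on $\bar H$. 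By construction the $\tilde s$ have no common zero on $\Sigma\setminus E$, so the base locus is contained in $E$. To pass from this to nefness I would take an arbitrary integral curve $C'\subseteq\bar H$ and lift it to an integral curve $\hat C\subseteq\Sigma$ that is finite over $C'$ and not contained in $E$; this is possible because no fibre of $q$ lies in $E$ (in each fibre the generic choice of $G_2,\dots,G_c$ spans modulo $F_1$, hence avoids $E$). Then some $\tilde s$ is not identically zero on $\hat C$, so $\deg\big(q^*\mathcal O(k,1)|_{\hat C}\big)\ge0$, whence $\mathcal O(k,1)\cdot C'\ge0$. \textbf{This lifting step, which converts base-point-freeness away from $E$ into non-negativity against every curve, is the main obstacle.}

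For the reverse inclusion ``$\subseteq$'' it suffices to exhibit two curves spanning the cone of curves. The first, $C_1$, is a line in a Pl\"ucker-embedded fibre of $\pi_2$: there $\langle F_1\rangle$ is fixed, so $\mathcal O(1,0)\cdot C_1=0$ while $\mathcal O(0,1)\cdot C_1=1$, and nefness of $\mathcal O(l_1,l_2)$ forces $l_2\ge0$. The second, $C_2$, I would build over a line $\ell\subseteq\bar H_{d_1}$ (a pencil $\langle F_1(t)\rangle$): the quotient bundle $\pi_2^*pr_{1*}\mathcal O_{\bar{\mathcal X}_{d_1}}(d_2)|_\ell$ contains a rank-$(c-1)$ sub bundle of degree exactly $k$, namely $\mathcal O(d_2-d_1+1)\oplus\mathcal O(d_2-d_1)^{\oplus(c-2)}$ carved out of its ``pencil'' summands (for $d_1=1,d_2=2$ one checks this quotient is $\mathcal O(2)\oplus\mathcal O(1)^{N-1}\oplus\mathcal O^{\binom N2}$, and the general case is analogous). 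Taking $\mathcal F|_{C_2}$ to be this sub bundle gives a section $C_2$ of the Grassmann bundle with $\mathcal O(1,0)\cdot C_2=1$ and $\mathcal O(0,1)\cdot C_2=\deg(\det\mathcal F^\vee|_{C_2})=-k$, so nefness forces $l_1-kl_2\ge0$.

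That the same integer $k=(c-1)(d_2-d_1)+1$ governs both the equation of Proposition \ref{eqZ} and the extremal curve $C_2$ is not a coincidence: the forward direction shows $\mathcal O(k,1)$ is nef, so no sub bundle of $\pi_2^*pr_{1*}\mathcal O_{\bar{\mathcal X}_{d_1}}(d_2)|_\ell$ of degree $>k$ can exist, and $C_2$ realizes equality. Thus $C_1$ and $C_2$ pin down the two extremal rays of the cone of curves, whose dual is precisely the cone $\{l_2\ge0,\ l_1\ge k l_2\}$, completing the characterization. I expect the two delicate points to be the lifting argument of the second paragraph and the exact determination of the splitting type of the quotient bundle over $\ell$ needed to produce $C_2$.
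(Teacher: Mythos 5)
Your ``if'' direction is sound and is, in substance, the paper's own argument reorganized around the extremal class $\mathcal{O}(k,1)$, $k=(c-1)(d_2-d_1)+1$: the identity $q^*\mathcal{O}(k,1)=e^*\mathcal{O}(k,1,\dots,1)\otimes\mathcal{O}_{\Sigma}(-E)$ follows from Proposition \ref{E}, the sections come from Proposition \ref{eqZ}, and the lifting of curves that you flag as ``the main obstacle'' is exactly what the paper does (it takes a rational section of $q$ over the normalization of the curve, chosen off $E$, which is a morphism by properness). In fact you can bypass the lifting entirely: by the projection formula every section of $q^*\mathcal{O}(k,1)$ is $q^*$ of a section of $\mathcal{O}(k,1)$ (since $q_*\mathcal{O}_{\Sigma}=\mathcal{O}_{\bar{H}}$), so the zero locus of each $\tilde{s}$ is a union of $q$-fibres; combined with your remark that no fibre of $q$ lies in $E$, the common zero locus of the $\tilde{s}$ is empty, i.e. $\mathcal{O}(k,1)$ is base-point free, hence nef.

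The genuine gap is in the ``only if'' direction, in the construction of $C_2$. You assert that over a line $\ell\subset\bar{H}_{d_1}$ the bundle $\mathcal{Q}_{\ell}:=pr_{1*}\mathcal{O}_{\bar{\mathcal{X}}_{d_1}}(d_2)|_{\ell}$ contains a rank-$(c-1)$ subbundle of degree $k$, justifying this by the case $d_1=1$, $d_2=2$ and ``the general case is analogous''. This is false for every $d_1\geq 2$ (the standing hypothesis of Conventions \ref{notationsgen}) when the pencil $A+tB$ is generic, i.e. has coprime members. Indeed, such a subbundle would contain a sub-line bundle of degree $\geq d_2-d_1+1$, hence a nonzero section of $\mathcal{Q}_{\ell}(-a)$ with $a=d_2-d_1+1$; from the presentation $0\to V_{d_2-d_1}\otimes\mathcal{O}_{\ell}(-1)\to V_{d_2}\otimes\mathcal{O}_{\ell}\to\mathcal{Q}_{\ell}\to 0$ (where $V_e=H^0(\mathbb{P}^N,\mathcal{O}(e))$ and the injection is multiplication by $A+tB$), such a section is a class in $\ker\bigl(H^1(\mathcal{O}_{\ell}(-a-1))\otimes V_{d_2-d_1}\to H^1(\mathcal{O}_{\ell}(-a))\otimes V_{d_2}\bigr)$, i.e. a chain $(\phi_r)_{0\leq r\leq a-1}$ in $V_{d_2-d_1}$ with $A\phi_r+B\phi_{r+1}=0$; coprimality forces $\phi_r=(-1)^rA^rB^{a-1-r}\chi$ with $\chi\in V_{d_2-ad_1}$. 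So the maximal degree of a sub-line bundle of $\mathcal{Q}_{\ell}$ over a generic pencil is $\lfloor d_2/d_1\rfloor$, which is $<d_2-d_1+1$ whenever $2\leq d_1<d_2$ (because $d_2/d_1<d_2-d_1+1\iff d_1(d_1-1)<d_2(d_1-1)$). Your test case $d_1=1$ is precisely the exceptional one where $\lfloor d_2/d_1\rfloor=d_2=k$, and it is excluded by the paper's conventions, so the claimed analogy breaks down. The missing idea is that the pencil must be degenerate: the paper takes $F_1(t)=H(X_0+tX_1)$ with a \emph{fixed component} $H$ of degree $d_1-1$, together with the constant forms $G_i=HX_{i-1}\prod_{j}(X_0+\lambda_j^{(i)}X_1)$; then the image of the constant section $G_i$ in the quotient bundle vanishes at the $d_2-d_1$ points $t=\lambda_j^{(i)}$ (plus $t=\infty$ for $i=2$), for a total of $(c-1)(d_2-d_1)+1$ points, and saturating produces the required subbundle, equivalently the curve $q\circ\gamma$ with $\deg\gamma^*q^*\mathcal{O}(1,0)=1$ and $\deg\gamma^*q^*\mathcal{O}(0,1)\leq -k$. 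Without a fixed component no choice of sections, constant or moving, can achieve this, so your $C_2$ does not exist as described.
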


\begin{proof}[$\mathbf{Preuve}$]

Dans toute cette preuve, les vari\'et\'es qu'on manipule sont
d\'efinies sur le corps $K$. Les
extensions des scalaires \`a $K$ seront partout sous-entendues.

\begin{etape1}

La condition est n\'ecessaire.

\end{etape1}

Supposons que $\mathcal{O}(l_1,l_2)$ est nef. On a $l_2\geq0$ car
$\mathcal{O}(l_1,l_2)$ est $\pi_2$-nef. On va montrer la seconde
in\'egalit\'e en calculant le degr\'e de $\mathcal{O}(l_1,l_2)$ sur
une courbe bien choisie.

 Soient $X_0, X_1\in H^0(\mathbb{P}^N,\mathcal{O}(1))$ des \'equations
lin\'eairement ind\'ependantes, $H\in
H^0(\mathbb{P}^N,\mathcal{O}(d_1-1))$ une \'equation non nulle et
$(\lambda^{(i)}_j)_{2\leq i\leq c,1\leq j\leq d_2-d_1}$ des scalaires distincts. Pour $2\leq i\leq c$, on
note $G_i=HX_{i-1}\prod_{j=1}^{d_2-d_1}(X_0+\lambda^{(i)}_jX_1)$. Consid\'erons
$\beta:\mathbb{P}^1\rightarrow\bar{H}_{d_1}$ le pinceau $t\mapsto\langle H(X_0+tX_1)\rangle$. La section constante
$s:\bar{H}_{d_1}\rightarrow\bar{H}_{d_1}\times\bar{H}_{d_2}^{c-1}$
de valeur $(\langle G_2\rangle,\dots,\langle G_c\rangle)$ fournit un morphisme $s\circ
\beta:\mathbb{P}^1\rightarrow\bar{H}_{d_1}\times\bar{H}_{d_2}^{c-1}$.

 Calculons les points de $\mathbb{P}^1$ dont l'image par $s\circ \beta$ est dans $W$.
Soient $t\in\mathbb{P}^1(K)$ et $a_2,\dots, a_c\in K$ non tous nuls. Alors $H(X_0+tX_1)$ divise $\sum_{i=2}^c a_iG_i$
si et seulement si $X_0+tX_1$ divise $\sum_{i=2}^c a_iX_{i-1}\prod_{j=1}^{d_2-d_1}(X_0+\lambda^{(i)}_jX_1)$. On voit ais\'ement que cela ne peut se produire
que si tous les $a_i$ sauf un sont nuls. Si c'est $a_i$ qui est non nul, les valeurs possibles de $t$ sont soit $t=\lambda_j^{(i)}$ pour un
$j\in\{1,\dots,d_2-d_1\}$, soit $t=\infty$
si $i=2$. On a montr\'e qu'exactement $(c-1)(d_2-d_1)+1$ points de $\mathbb{P}^1$ sont envoy\'es dans $W$ par $s\circ \beta$.

 Comme l'image de $s\circ\beta$ n'est pas incluse dans $W$ et que $e$ est birationnel par le lemme \ref{birat}, le crit\`ere valuatif de
propret\'e permet de relever $s\circ\beta$ en un morphisme
$\gamma:\mathbb{P}^1\rightarrow\Sigma$. Remarquons qu'exactement $(c-1)(d_2-d_1)+1$ points
de $\mathbb{P}^1$ sont envoy\'es dans $E$ par $\gamma$. 
Finalement,
en composant par $q$, on obtient un morphisme $q\circ
\gamma:\mathbb{P}^1\rightarrow\bar{H}$.

  On calcule alors les degr\'es des fibr\'es en droites de $\bar{H}$ sur $\mathbb{P}^1$:
\begin{alignat*}{3}
   \mathbb{P}^1\cdot\gamma^*q^*\mathcal{O}(1,0)  &=\mathbb{P}^1\cdot\gamma^*q^*\pi_2^*\mathcal{O}(1) &&= \mathbb{P}^1\cdot\gamma^*e^*p_1^*\mathcal{O}(1)\\
      &= \mathbb{P}^1\cdot\beta^*s^*p_1^*\mathcal{O}(1) &&= \mathbb{P}^1\cdot\beta^*\mathcal{O}(1)\\
  & = 1 &&\text{ car $\beta:\mathbb{P}^1\to\bar{H}_{d_1}$ est une droite.}
\end{alignat*}
\begin{alignat*}{6}
   \mathbb{P}^1\cdot&\gamma^*q^*\mathcal{O}(0,1)=\mathbb{P}^1\cdot\gamma^*e^*\mathcal{O}(0,1,\dots,1)-\mathbb{P}^1\cdot\gamma^*\mathcal{O}(E) &&\\
&\text{\hspace{5em} par la proposition }\ref{E}&&\\
      &\leq \mathbb{P}^1\cdot\beta^*s^*\mathcal{O}(0,1,\dots,1)-(c-1)(d_2-d_1)-1 &&\\
&\text{\hspace{5em} par calcul de }\Card(\gamma^{-1}(E))&&\\
& =-(c-1)(d_2-d_1)-1  &&\\
&\text{\hspace{5em} car }s^*\mathcal{O}(0,1,\dots,1)=\mathcal{O}.
\end{alignat*}
On montre enfin l'in\'egalit\'e voulue comme suit :
\begin{alignat*}{2}
   l_1-l_2((c-1)(d_2-d_1)+1)  &\geq \mathbb{P}^1\cdot\gamma^*q^*\mathcal{O}(l_1,l_2) &&\text{ car }l_2\geq0 \\
      &\geq0 &&\text{ car }\mathcal{O}(l_1,l_2) \text{ est nef. }
\end{alignat*}
\begin{etape2}

La condition est suffisante.

\end{etape2}

Supposons \`a pr\'esent les in\'egalit\'es v\'erifi\'ees, et
montrons que $\mathcal{O}(l_1,l_2)$ est nef. Soit pour cela $C$ une
courbe int\`egre de $\bar{H}$. Notons $\tilde{C}$ sa
normalisation et $\alpha:\tilde{C}\rightarrow\bar{H}$
le morphisme naturel. Comme $q$ est un fibr\'e localement trivial,
on peut trouver une section rationnelle
$\beta:\tilde{C}\dashrightarrow\Sigma$ de $\alpha$;
on peut de plus supposer que son image n'est pas incluse dans $E$.
Par crit\`ere valuatif de propret\'e, $\beta$ est en fait un
morphisme. Notons $\gamma=e\circ\beta$. Comme
$\beta(\tilde{C})\not\subset E$, on a $\gamma(\tilde{C})\not\subset W$.
On peut donc choisir par la proposition \ref{eqZ} un diviseur de Cartier
$D\in|\mathcal{O}_{\bar{H}_{d_1}\times\bar{H}_{d_2}^{c-1}}((c-1)(d_2-d_1)+1,1,\dots,1)|$
contenant $W$ mais pas $\gamma(\tilde{C})$, donc tel que $e^*D$ contienne $E$ mais pas $\beta(\tilde{C})$. On calcule alors :
\begin{alignat*}{7}
   \tilde{C}\cdot\alpha^*\mathcal{O}_{\bar{H}}&(l_1,l_2)  =
\tilde{C}\cdot\beta^*q^*\mathcal{O}_{\bar{H}}(l_1,l_2) &&\text{ par projection}\\
      & = \tilde{C}\cdot\beta^*(e^*\mathcal{O}_{\bar{H}_{d_1}\times\bar{H}_{d_2}^{c-1}}(l_1,l_2,\dots,l_2)-l_2E)
&&\text{ par \ref{E}}\\
  & \geq \tilde{C}\cdot\beta^*(e^*\mathcal{O}_{\bar{H}_{d_1}\times\bar{H}_{d_2}^{c-1}}(l_1,l_2,\dots,l_2)-l_2e^*D) &&\text{ car }E\subset e^*D\text{, }l_2\geq0\\
&=\tilde{C}\cdot\gamma^*\mathcal{O}(l_1-l_2((c-1)(d_2-d_1)+1),0,\dots,0)
&&\text{ par projection }\\
&\geq0.&&
\end{alignat*}
On a bien montr\'e que $\mathcal{O}_{\bar{H}}(l_1,l_2)$ est nef.
\end{proof}

\begin{rem}\label{echec}
Au paragraphe \ref{edmaff}, on a pu montrer facilement le th\'eor\`eme \ref{edm} (i) car le diviseur discriminant
$\Delta=\bar{H}\setminus H$ \'etait ample sur $\bar{H}$, et son compl\'e\-men\-taire $H$ \'etait donc affine.
Cette m\'ethode ne permet pas de montrer le th\'eor\`eme \ref{edm} (ii) ; plus pr\'ecis\'ement, si $c=2$, elle ne fonctionne jamais.

 En effet, le th\'eor\`eme 1.2 de \cite{Ooldeg} permet de calculer le fibr\'e en droites associ\'e au diviseur discriminant
$\Delta=\bar{H}\setminus H$. Quand $c=2$, les calculs sont men\'es dans l'exemple 1.10 de \cite{Ooldeg}, et on obtient 
$\mathcal{O}(\Delta)=\mathcal{O}(l_1,l_2)$ avec
$l_1=d_2(e_2^{N-1}+2e_1e_2^{N-2}+\dots+Ne_1^{N-1})$ et $
l_2=d_1(e_1^{N-1}+2e_2e_1^{N-2}+\dots+Ne_2^{N-1}) $, et o\`u l'on a
pos\'e $e_i=d_i-1$. Comme $\frac{l_1}{l_2}\leq\frac{d_2}{d_1}\leq
d_2-d_1+1$, le th\'eor\`eme \ref{ample} montre que ce fibr\'e n'est
jamais ample.

Quand $c>2$, les formules calculant $l_1$ et $l_2$ sont plus compliqu\'ees, et font appara\^itre des sommes altern\'ees,
ce qui rend difficile une v\'erification analogue.
\end{rem}

\subsection{Th\'eorie g\'eom\'etrique des invariants}\label{preuveedm}

Dans ce paragraphe, on applique le crit\`ere de Hilbert-Mumford pour montrer le th\'eor\`eme \ref{edm} (ii).
On commence par \'evaluer les fonctions $\mu$ intervenant dans ce crit\`ere
pour l'action de $SL_{N+1}$ sur $\bar{H}$ relativement aux fibr\'es en droites $SL_{N+1}$-lin\'earis\'es 
d\'ecrits au paragraphe \ref{constructionsqp}.
Ces fonctions $\mu$ d\'ependent d'un point g\'eom\'etrique $P=[F_1,F_2,\dots, F_c]\in\bar{H}(K)$ et d'un sous-groupe \`a un 
param\`etre non trivial $\rho : \mathbb{G}_{m,K}\to SL_{N+1,K}$. 

Rappelons bri\`evement leur d\'efinition. Consid\'erons la fibre en $\lim_{t\to
0}\rho(t)\cdot P$ du fibr\'e en droites g\'eom\'etrique sur $\bar{H}$ associ\'e \`a $\mathcal{O}(l_1,l_2)$.
Le morphisme $\rho$ induit une action de $\mathbb{G}_{m,K}$ sur cette fibre.
Cette action se fait via un caract\`ere de $\mathbb{G}_{m,K}$,
c'est-\`a-dire un entier ; on note $\mu^{\mathcal{O}(l_1,l_2)}(P,\rho)$
l'oppos\'e de cet entier.
Dans les deux lemmes qui suivent, on met $\rho$ et $P$ sous une forme
qui permettra de calculer $\mu^{\mathcal{O}(l_1,l_2)}(P,\rho)$.

\begin{lemme}\label{bon1ps}
Soit $\rho : \mathbb{G}_{m,K}\to SL_{N+1,K}$ un sous-groupe \`a un param\`etre non trivial.
Alors on peut trouver des entiers $\alpha_0\leq\dots\leq\alpha_N$ non tous nuls 
de somme nulle et une base de $K^{N+1}$ dans laquelle
$\rho(t)\cdot(x_0,\dots,x_N)=(t^{\alpha_0}x_0,\dots,t^{\alpha_N}x_N)$.
\end{lemme}

\begin{proof}[$\mathbf{Preuve}$]
C'est standard.
\end{proof}

Dans le reste de ce paragraphe, $\rho$ est fix\'e. On travaille avec un syst\`eme de coordonn\'ees et
des entiers $\alpha_i$ comme dans le lemme \ref{bon1ps}.

\begin{conventions}\label{notationsalpha}
Si $\alpha$ est la donn\'ee d'entiers
$\alpha_0\leq\dots\leq\alpha_N$ non tous nuls de somme nulle, le
$\alpha$-degr\'e d'un mon\^ome $M=X_0^{\lambda_0}\dots
X_N^{\lambda_N}$ est $\deg_{\alpha}(M)=\sum_i\alpha_i\lambda_i$.
Si $F\in
H^0(\mathbb{P}^N_K,\mathcal{O}(d))$ est une \'equation non nulle, on
note $\deg_{\alpha}(F)$ le plus grand $\alpha$-degr\'e des mon\^omes de $F$. Par convention, 
$\deg_\alpha(0)=-\infty$. Soit $F^{\alpha}$ la somme des termes de $F$ de $\alpha$-degr\'e
$\deg_{\alpha}(F)$. On dit que $F$ est $\alpha$-homog\`ene si $F=F^\alpha$.
\end{conventions}

\begin{lemme}\label{bonneq}
Soit $P=[F_1,F_2,\dots,F_c]\in\bar{H}(K)$.
Alors il existe des \'equations $\Phi_i\in H^0(\mathbb{P}^N_K,\mathcal{O}(d_i))$ pour $2\leq i\leq c$ telles que :
\begin{enumerate}[(i)]
 \item $P=[F_1,\Phi_2,\dots,\Phi_c]$.
\item $\deg_{\alpha}(\Phi_i)\leq\deg_{\alpha}(F_i)$ pour $2\leq i\leq c$.
\item    $[F_1^{\alpha},\Phi_2^{\alpha},\dots, \Phi_c^{\alpha}]\in\bar{H}(K)$.
\end{enumerate}
\end{lemme}

\begin{proof}[$\mathbf{Preuve}$]

Choisissons $\Phi_i\in H^0(\mathbb{P}^N_K,\mathcal{O}(d_i))$ pour $2\leq i\leq c$ v\'erifiant les propri\'et\'es (i) et (ii), et telles que 
$\sum_{i=2}^c\deg_{\alpha}(\Phi_i)$ soit minimal.
%C'est possible car on a la minoration $\deg_{\alpha}(\Phi_i)\geq d_i\alpha_0$. 
Montrons par l'absurde que la condition (iii) est automatiquement v\'erifi\'ee.

Si ce n'\'etait pas le cas, on pourrait trouver $Q\in H^0(\mathbb{P}^N_K,\mathcal{O}(d_2-d_1))$ et
$\lambda_1,\dots,\lambda_c\in K$ non tous nuls tels que 
$QF_1^{\alpha}=\sum_{i=2}^c\lambda_i \Phi_i^{\alpha}$. En ne gardant dans cette identit\'e que les termes de $\alpha$-degr\'e maximal (i.e.
quitte \`a remplacer $Q$ par $Q^\alpha$ ou $0$ et \`a remplacer certains des $\lambda_i$ par $0$), 
on peut supposer que tous les termes
de cette identit\'e sont $\alpha$-homog\`enes de m\^eme $\alpha$-degr\'e.
Soit alors $2\leq j\leq c$ tel que $\lambda_{j}$ soit non nul ; on pose $\Phi'_i=\Phi_i$ si $i\neq j$ et
$\Phi'_j=\sum_{i=2}^c\lambda_i \Phi_i-QF_1$.

Les $\Phi'_i$ v\'erifient encore la propri\'et\'e (i). On a bien $\deg_\alpha(\Phi'_i)=\deg_\alpha(\Phi_i)$ si $i\neq j$.
De plus, l'expression de $\Phi'_j$ montre que $\deg_{\alpha}(\Phi'_j)\leq\deg_{\alpha}(\Phi_j)$, 
mais que la somme des termes de $\alpha$-degr\'e $\deg_{\alpha}(\Phi_j)$ dans $\Phi'_j$
est nulle, i.e. $\deg_{\alpha}(\Phi'_j)<\deg_{\alpha}(\Phi_j)$. D'une part cela montre que les $\Phi'_i$ v\'erifient encore la propri\'et\'e (ii).
D'autre part, cela contredit la minimalit\'e dans le choix des $\Phi_i$.
\end{proof}

Calculons maintenant les fonctions $\mu$ qui nous seront utiles.

\begin{lemme}\label{mu1}
Soit $\langle F_1\rangle\in\bar{H}_{d_1}(K)$.
Alors $\mu^{\mathcal{O}(1)}(\langle F_1\rangle,\rho)=\deg_{\alpha}(F_1)$.
\end{lemme}

\begin{proof}[$\mathbf{Preuve}$]
On rappelle que, par d\'efinition de l'action duale,
 si $F$ est un \'el\'ement $\alpha$-homog\`ene de $H^0(\mathbb{P}^N_K,\mathcal{O}(d_1))=\Sym^{d_1}(K^{N+1})^{\vee}$, l'action de $\rho$ sur $F$ est donn\'ee
par $\rho(t)\cdot F=t^{-\deg_{\alpha}(F)}F$. Ainsi, si l'on \'ecrit $F_1=F_1^{\alpha}+R$,
\begin{alignat*}{2}
 \rho(t)\cdot\langle F_1\rangle=\langle\rho(t)\cdot F_1\rangle&=\langle t^{\deg_{\alpha}(F_1)}\rho(t)\cdot F_1\rangle&&
\\
      &= \langle F_1^{\alpha}+t^{\deg_{\alpha}(F_1)}\rho(t)\cdot R \rangle.&&
\end{alignat*}
Le terme de droite tendant vers $0$, on a $\lim_{t\to
0}\rho(t)\cdot\langle F_1\rangle=\langle F_1^{\alpha}\rangle$.

Enfin, dans $H^0(\mathbb{P}^N_K,\mathcal{O}(d_1))$,
$\rho(t)\cdot F_1^{\alpha}=t^{-\deg_{\alpha}(F_1)}F_1^{\alpha}$,
ce qui montre, par d\'efinition de la $SL_{N+1}$-lin\'earisation de $\mathcal{O}(1)$, que
$\mu^{\mathcal{O}(1)}(\langle F_1\rangle,\rho)=\deg_{\alpha}(F_1)$.
\end{proof}

\begin{lemme}\label{mu2}
Soit $P\in\bar{H}(K)$. On \'ecrit $P=[F_1,\Phi_2,\dots,\Phi_c]$ o\`u les $\Phi_i$ ont \'et\'e choisis comme dans le lemme \ref{bonneq}. Alors
$\mu^{\mathcal{O}(0,1)}(P,\rho)=\sum_{i=2}^c\deg_{\alpha}(\Phi_i)$.
\end{lemme}

\begin{proof}[$\mathbf{Preuve}$]
La preuve est analogue \`a celle du lemme pr\'ec\'edent.
\end{proof}

En combinant les lemmes \ref{mu1} et \ref{mu2}, on obtient :

\begin{prop}\label{mucombo}
Soit $P\in\bar{H}(K)$. On \'ecrit $P=[F_1,\Phi_2,\dots,\Phi_c]$ o\`u les $\Phi_i$ ont \'et\'e choisis comme dans le lemme \ref{bonneq}. Alors :
$$\mu^{\mathcal{O}(l_1,l_2)}(P,\rho)=l_1\deg_{\alpha}(F_1)+l_2\sum_{i=2}^c\deg_{\alpha}(\Phi_i).$$
\end{prop}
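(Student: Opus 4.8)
The plan is to reduce the statement to the two preceding lemmas by exploiting the additivity of the Hilbert--Mumford function in the linearized line bundle. First I would recall that, for the fixed geometric point $P$ and the fixed one-parameter subgroup $\rho$, the limit $P_0=\lim_{t\to0}\rho(t)\cdot P$ is an intrinsic point of $\bar{H}$, independent of the line bundle under consideration: it exists because $\bar{H}$ is proper over $\Spec(\mathbb{Z})$, so the morphism $t\mapsto\rho(t)\cdot P$ from $\mathbb{G}_{m,K}$ extends over $0$ by the valuative criterion. By definition, $\mu^{\mathcal{L}}(P,\rho)$ is the opposite of the weight of the $\mathbb{G}_{m,K}$-action, induced by $\rho$, on the one-dimensional fibre $\mathcal{L}_{P_0}$.

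The key observation is that $\mathcal{L}\mapsto\mu^{\mathcal{L}}(P,\rho)$ is a group homomorphism from the group of $SL_{N+1}$-linearized line bundles on $\bar{H}$ to $\mathbb{Z}$. Indeed, if $\mathcal{L}$ and $\mathcal{M}$ induce weights $w_{\mathcal{L}}$ and $w_{\mathcal{M}}$ on $\mathcal{L}_{P_0}$ and $\mathcal{M}_{P_0}$, then $\mathbb{G}_{m,K}$ acts on $(\mathcal{L}\otimes\mathcal{M})_{P_0}=\mathcal{L}_{P_0}\otimes\mathcal{M}_{P_0}$ with weight $w_{\mathcal{L}}+w_{\mathcal{M}}$, so that $\mu^{\mathcal{L}\otimes\mathcal{M}}(P,\rho)=\mu^{\mathcal{L}}(P,\rho)+\mu^{\mathcal{M}}(P,\rho)$. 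Applying this to the decomposition $\mathcal{O}(l_1,l_2)=\mathcal{O}(1,0)^{\otimes l_1}\otimes\mathcal{O}(0,1)^{\otimes l_2}$, equipped with the natural linearizations built in paragraph \ref{constructionsqp} precisely as these linear combinations, gives
$$\mu^{\mathcal{O}(l_1,l_2)}(P,\rho)=l_1\,\mu^{\mathcal{O}(1,0)}(P,\rho)+l_2\,\mu^{\mathcal{O}(0,1)}(P,\rho).$$

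It then remains to identify the two elementary contributions. The term $\mu^{\mathcal{O}(0,1)}(P,\rho)=\sum_{i=2}^c\deg_{\alpha}(\Phi_i)$ is exactly the content of Lemme \ref{mu2}. For the other term I would use that $\mathcal{O}(1,0)=\pi_2^*\mathcal{O}(1)$, with its linearization pulled back along the $SL_{N+1}$-equivariant morphism $\pi_2:\bar{H}\to\bar{H}_{d_1}$ sending $P=[F_1,\dots,F_c]$ to $\langle F_1\rangle$. Equivariance gives $\pi_2(P_0)=\lim_{t\to0}\rho(t)\cdot\langle F_1\rangle$, and the weight of $\mathbb{G}_{m,K}$ on $(\pi_2^*\mathcal{O}(1))_{P_0}=\mathcal{O}(1)_{\pi_2(P_0)}$ is the very weight computed on $\bar{H}_{d_1}$; hence $\mu^{\mathcal{O}(1,0)}(P,\rho)=\mu^{\mathcal{O}(1)}(\langle F_1\rangle,\rho)=\deg_{\alpha}(F_1)$ by Lemme \ref{mu1}. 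Substituting both values into the displayed equality yields the proposition.

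I expect the only point requiring genuine care to be the compatibility of $\mu$ with tensor products and with equivariant pullback; both are formal consequences of the definition of $\mu$ as a character weight, so no substantial computation is needed, the real content having already been packaged into Lemmes \ref{bonneq}, \ref{mu1} and \ref{mu2}.
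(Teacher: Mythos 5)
Your proposal is correct and is exactly the argument the paper intends: the paper states the proposition with only the phrase ``En combinant les lemmes \ref{mu1} et \ref{mu2}'', and the combination it has in mind is precisely the additivity of $\mu^{\bullet}(P,\rho)$ under tensor product of linearized line bundles together with its compatibility with pullback along the $SL_{N+1}$-equivariant morphism $\pi_2$, which you spell out. Your write-up simply makes explicit the formal properties of $\mu$ that the paper leaves implicit, so there is nothing to correct.
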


Nous sommes pr\^ets \`a appliquer le crit\`ere de Hilbert-Mumford.

\begin{prop}\label{stablisse}
Il existe un fibr\'e en droites ample $SL_{N+1}$-lin\'earis\'e $\mathcal{L}$ sur $\bar{H}$ tel
que $H\subset\bar{H}^s(\mathcal{L})$ si et seulement si 
\begin{equation}\label{condinum}
d_2(N-c+2)>d_1((c-1)(d_2-d_1)+1).
\end{equation}
\end{prop}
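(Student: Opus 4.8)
The plan is to read stability off the Hilbert--Mumford criterion together with the $\mu$-function computed in Proposition~\ref{mucombo}. In the sign convention fixed above, a geometric point $P$ of $\bar{H}$ is stable for an ample linearised bundle $\mathcal{L}$ exactly when $\mu^{\mathcal{L}}(P,\rho)>0$ for every nontrivial one-parameter subgroup $\rho$ (the Fermat intersection, for which every $\deg_{\alpha}$ is positive, is the guiding example). Writing $\mathcal{L}=\mathcal{O}(l_1,l_2)$ and using the good representatives of Lemma~\ref{bonneq}, this reads
$$l_1\deg_{\alpha}(F_1)+l_2\sum_{i=2}^{c}\deg_{\alpha}(\Phi_i)>0$$
for every smooth complete intersection $P=[F_1,\Phi_2,\dots,\Phi_c]$ and every $\rho$. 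By Theorem~\ref{ample}, $\mathcal{O}(l_1,l_2)$ is ample precisely when $l_2>0$ and the slope $\ell:=l_1/l_2$ satisfies $\ell>\ell^{*}$, where $\ell^{*}:=(c-1)(d_2-d_1)+1$. So the statement becomes: there is a slope $\ell>\ell^{*}$ for which the displayed inequality holds for all smooth $P$ and all $\rho$, if and only if \eqref{condinum} holds. Setting $\ell^{**}:=d_2(N-c+2)/d_1$, note that \eqref{condinum} is exactly $\ell^{*}<\ell^{**}$.

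For the sufficiency I would fix a rational slope $\ell$ with $\ell^{*}<\ell<\ell^{**}$, possible precisely because of \eqref{condinum}, and clear denominators. The positivity of $\mu$ along every $\rho$ is then the content of the $\alpha$-degree inequality of Theorem~\ref{alphadeg}: smoothness forces the $\alpha$-degrees of the defining equations to be large enough that one has, for the good representatives and every $\rho$, the bound $d_2(N-c+2)\deg_{\alpha}(F_1)+d_1\sum_{i}\deg_{\alpha}(\Phi_i)\geq 0$; substituting this into the displayed inequality for a slope below $\ell^{**}$ yields $\mu^{\mathcal{O}(l_1,l_2)}(P,\rho)>0$. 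This is where essentially all the work sits, and proving Theorem~\ref{alphadeg} — a uniform bound valid along every $\rho$ and for every smooth complete intersection, strong enough to control the contributions regardless of the sign of $\deg_{\alpha}(F_1)$ — will be the main obstacle; it is deferred to the third part.

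For the necessity I would exhibit a single smooth complete intersection and one-parameter subgroup on which $\mu\leq 0$ for every ample bundle as soon as \eqref{condinum} fails. Take $\rho$ with weights $\alpha_0=\dots=\alpha_{N-c+1}=-(c-1)$ and $\alpha_{N-c+2}=\dots=\alpha_N=N-c+2$ (these are increasing and of sum $0$), and put
$$F_1=\sum_{j=0}^{N-c+1}X_j^{d_1},\qquad \Phi_i=X_{N-c+i}^{d_2}+(\text{generic lower terms})\quad(2\leq i\leq c).$$
Here $F_1$ is a Fermat form in the $N-c+2$ low-weight variables, whose zero locus is a cone with vertex the $(c-2)$-plane $\{X_0=\dots=X_{N-c+1}=0\}$; the leading terms of the $\Phi_i$ are the $c-1$ distinct high-weight pure powers, so the intersection misses that vertex plane, and for a generic choice of lower-order terms it is smooth. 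Then $\deg_{\alpha}(F_1)=-d_1(c-1)$, while each $\Phi_i$ attains the maximal value $\deg_{\alpha}(\Phi_i)=d_2(N-c+2)$, a value that cannot be lowered by the reductions of Lemma~\ref{bonneq} since $F_1$ involves only low-weight variables and no multiple of it can cancel a pure high-weight leading term. Hence
$$\mu^{\mathcal{O}(l_1,l_2)}(P,\rho)=(c-1)\bigl(-l_1 d_1+l_2\,d_2(N-c+2)\bigr),$$
which is $\leq 0$ exactly when $\ell\geq\ell^{**}$. If \eqref{condinum} fails, every ample bundle has $\ell>\ell^{*}\geq\ell^{**}$, so this smooth complete intersection is unstable and no ample linearisation stabilises all of $H$. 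Besides Theorem~\ref{alphadeg} itself, the delicate point here will be to justify rigorously that the displayed example is smooth and that the stated $\alpha$-degrees are indeed the minimal ones.
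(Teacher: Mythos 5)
Your framing (Picard group and ample cone of $\bar{H}$ via Théorème \ref{ample}, Hilbert--Mumford through Proposition \ref{mucombo}, reduction to an $\alpha$-degree bound) matches the paper, but the sufficiency half has a genuine gap: the single inequality you attribute to Théorème \ref{alphadeg}, namely $d_2(N-c+2)\deg_{\alpha}(F_1)+d_1\sum_i\deg_{\alpha}(\Phi_i)\geq 0$, does not imply $\mu>0$ for slopes $\ell<\ell^{**}$. Write $x=\deg_{\alpha}(F_1)/d_1$ and $y=\sum_i\deg_{\alpha}(\Phi_i)/d_2$; your bound reads $(N-c+2)x+y\geq 0$, while $\mu/(l_2d_2)=(\ell d_1/d_2)\,x+y$. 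Substituting only gives
$$\mu/(l_2d_2)\;\geq\;\Bigl(\tfrac{\ell d_1}{d_2}-(N-c+2)\Bigr)x,$$
which is positive only when $x<0$. If $\deg_{\alpha}(F_1)>0$ (e.g.\ whenever $X_N^{d_1}$ occurs in $F_1$), your bound allows $y=-(N-c+2)x$ and the substitution then produces a \emph{negative} right-hand side, so nothing is proved; and if $\deg_{\alpha}(F_1)=0$ you get at best $\mu\geq 0$, which is not stability. What the paper actually uses is that Théorème \ref{alphadeg} is a \emph{family} of inequalities indexed by reals $k_1,\dots,k_c$ with $\min_i k_i\geq\frac{1}{N+1}\sum_i k_i$, together with a strict variant: one applies the strict version with $k_1=l_1d_1$, $k_i=l_2d_2$ ($i\geq 2$), and the hypothesis then forces \emph{two} conditions, $\ell<\frac{d_2}{d_1}(N-c+2)$ and $\ell>\frac{d_2}{d_1}\frac{c-1}{N}$. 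The second one is automatic for ample slopes (since $\frac{d_2(c-1)}{d_1N}<\frac{d_2}{d_1}\leq d_2-d_1+1\leq(c-1)(d_2-d_1)+1$), but it corresponds to further inequalities of the family that are exactly the information missing from your single bound; and the strictness of \ref{alphadeg}(ii) is what upgrades $\mu\geq0$ to $\mu>0$. As written, your "substitution" step fails.

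Your necessity half, by contrast, is correct and takes a genuinely different route from the paper. The paper invokes the optimality clause (iii) of Théorème \ref{alphadeg}, whose proof uses weights $(-1,\dots,-1,N)$ and \emph{generic} equations avoiding a point; you instead produce an explicit Fermat-type configuration with weights $(-(c-1),\dots,-(c-1),N-c+2,\dots,N-c+2)$. The two points you flag as delicate do go through: since every monomial of $\alpha$-degree $<d_2(N-c+2)$ involves a low-weight variable, the intersection avoids the vertex plane $\{X_0=\dots=X_{N-c+1}=0\}$, which contains both $\Sing(\{F_1=0\})$ and the base loci of the linear systems of lower terms, so Bertini (in characteristic $0$, which suffices to contradict $H\subset\bar{H}^s$) gives smoothness; and no combination $\sum_i\lambda_i X_{N-c+i}^{d_2}$ is a multiple of $F_1$, so condition (iii) of Lemme \ref{bonneq} holds for your representatives and Proposition \ref{mucombo} really computes $\mu=(c-1)\bigl(-l_1d_1+l_2d_2(N-c+2)\bigr)$. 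This gives a self-contained necessity proof, independent of the optimality statement \ref{alphadeg}(iii); its only cost is that it is special to this situation, whereas the paper's clause (iii) is stated and proved for arbitrary degrees.
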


\begin{proof}[$\mathbf{Preuve}$]
On a vu au paragraphe \ref{constructionsqp} que les fibr\'es en droites sur $\bar{H}$
sont de la forme $\mathcal{O}(l_1,l_2)$ et sont uniquement $SL_{N+1}$-lin\'earis\'es.
Par le th\'eor\`eme \ref{ample}, un tel fibr\'e en droites 
est ample si et seulement si $l_2>0$ et $\frac{l_1}{l_2}>(c-1)(d_2-d_1)+1$.

Supposons dans un premier temps que (\ref{condinum}) est v\'erifi\'ee 
et montrons que $\mathcal{L}=\mathcal{O}(l_1,l_2)$ avec $l_1=kd_2(N+2-c)-1$ et $l_2=kd_1$ convient si $k\gg0$.
Ce fibr\'e en droites est bien ample : $l_2>0$ et $\frac{l_1}{l_2}>(c-1)(d_2-d_1)+1$ est vrai pour $k\gg0$ par
(\ref{condinum}). Montrons alors
$H\subset\bar{H}^s(\mathcal{O}(l_1,l_2))$ en appliquant le crit\`ere de Hilbert-Mumford (\cite{GIT} Theorem 2.1).
Soient pour cela $P=[F_1,F_2,\dots, F_c]\in H(K)$ et
$\rho : \mathbb{G}_{m,K}\to SL_{N+1,K}$ un sous-groupe \`a un 
param\`etre non trivial qu'on peut supposer de la forme obtenue dans le lemme \ref{bon1ps}. 
Par le lemme \ref{bonneq} et la proposition \ref{mucombo}, quitte \`a modifier $F_2,\dots,F_c$, on peut supposer que 
$\mu^{\mathcal{O}(l_1,l_2)}(P,\rho)=l_1\deg_{\alpha}(F_1)+l_2\sum_{i=2}^c\deg_{\alpha}(F_i).$ Par le th\'eor\`eme \ref{alphadeg} (ii),
pour montrer que $\mu^{\mathcal{O}(l_1,l_2)}(P,\rho)>0$ et conclure, il suffit
de v\'erifier que $(N+1)l_1d_1>l_1d_1+(c-1)l_2d_2$ et que $(N+1)l_2d_2>l_1d_1+(c-1)l_2d_2$, i.e. que :
$$\frac{d_2}{d_1}\frac{c-1}{N}<\frac{l_1}{l_2}<\frac{d_2}{d_1}(N-c+2).$$
On remarque alors que $\frac{l_1}{l_2}$
est une fonction croissante de $k$ qui tend vers $\frac{d_2}{d_1}(N-c+2)$. Cela montre que la seconde in\'egalit\'e est toujours vraie. 
Comme $\frac{c-1}{N}<1<N-c+2$, cela montre aussi que la premi\`ere in\'egalit\'e est vraie pour $k\gg0$.

R\'eciproquement, supposons que (\ref{condinum}) n'est pas v\'erifi\'ee, et soit $\mathcal{O}(l_1,l_2)$ un fibr\'e ample sur $\bar{H}$.
L'amplitude de 
$\mathcal{O}(l_1,l_2)$ et le fait que (\ref{condinum}) n'est pas vraie montrent que $\frac{l_1}{l_2}\geq\frac{d_2}{d_1}(N-c+2)$, donc que 
$(N+1)l_2d_2\leq l_1d_1+(c-1)l_2d_2$. Alors, par le th\'eor\`eme \ref{alphadeg} (iii),
on peut trouver des entiers $\alpha_0\leq\dots\leq\alpha_N$
non tous nuls de somme nulle et des \'equations 
$F_i\in H^0(\mathbb{P}^N_K,\mathcal{O}(d_i))$, $1\leq i\leq c$, d\'efinissant une intersection compl\`ete lisse telles que
$l_1\deg_{\alpha}(F_1)+l_2\sum_{i=2}^c\deg_{\alpha}(F_i)\leq 0$.
Soient $\Phi_2,\dots,\Phi_c$ comme dans le lemme \ref{bonneq}. Par la condition (ii) de ce lemme, on a encore
$l_1\deg_{\alpha}(F_1)+l_2\sum_{i=2}^c\deg_{\alpha}(\Phi_i)\leq 0$.
Notons $\rho:\mathbb{G}_{m,K}\to SL_{N+1,K}$ le sous-groupe \`a un param\`etre d\'efini par
$\rho(t)\cdot(x_0,\dots,x_N)=(t^{\alpha_0}x_0,\dots,t^{\alpha_N}x_N)$, et $P=[F_1,F_2,\dots,F_c]\in H(K)$. Par la proposition
\ref{mucombo}, $\mu^{\mathcal{O}(l_1,l_2)}(P,\rho)\leq 0$, et le
crit\`ere de Hilbert-Mumford montre que $P\notin\bar{H}^s(\mathcal{O}(l_1,l_2))$. 
\end{proof}

On en d\'eduit imm\'ediatement le th\'eor\`eme \ref{edm}(ii).

\begin{proof}[$\mathbf{Preuve \text{ }du \text{ }th\acute{e}or\grave{e}me\text{ }\ref{edm} (ii)}$]~
Par la proposition \ref{stablisse}, il existe un fibr\'e en droites $SL_{N+1}$-lin\'earis\'e ample sur $\bar{H}$ rendant tous les
points de $H$ stables. La th\'eorie g\'eom\'etrique des invariants permet donc de construire un quotient g\'eom\'etrique quasi-projectif de $H$ par
$SL_{N+1}$ (\cite{Seshadri} Theorem 4). Celui-ci
est aussi un quotient g\'eom\'etrique quasi-projectif de $H$ par $PGL_{N+1}$ : l'espace de modules grossier $M$ 
est donc bien quasi-projectif.
\end{proof}

\section{Minoration du $\alpha$-degr\'e}

On fixe un corps alg\'ebriquement clos $K$.
Dans cette partie, on autorise  $1\leq c\leq N$ et $2\leq d_1\leq\dots\leq d_c$.
Une intersection compl\`ete sur $K$ est toujours de codimension $c$
dans $\mathbb{P}^N_K$ et de degr\'es $d_1,\dots,d_c$.
On conserve les conventions \ref{notationsalpha}.
L'objectif de cette partie est la preuve de l'in\'egalit\'e suivante, qu'on a utilis\'ee
au paragraphe \ref{preuveedm} pour v\'erifier le crit\`ere de Hilbert-Mumford.

\begin{thm}\label{alphadeg}
\begin{enumerate}[(i)]
 \item 
Soient $k_1,\dots, k_c$ des nombres r\'eels tels que :
\begin{equation}\label{hypki}
\min_{1\leq i\leq c}k_i\geq \frac{1}{N+1}\sum_{i=1}^c k_i.
\end{equation}
Alors si $\alpha_0\leq\dots\leq\alpha_N$ sont des entiers non tous nuls de somme nulle et si $F_1,\dots, F_c$ constituent
une suite r\'eguli\`ere globale d\'efinissant une intersection compl\`ete lisse, on a :
\begin{equation}\label{concki}\sum_{i=1}^c k_i\frac{\deg_{\alpha}(F_i)}{d_i}\geq 0.
\end{equation}

\item Supposons qu'on n'a pas $c=1$ et $d_1=2$. Alors si l'in\'egalit\'e (\ref{hypki}) est stricte, l'in\'egalit\'e (\ref{concki}) est stricte.

\item Les \'enonc\'es (i) et (ii) sont optimaux au sens o\`u ils seraient faux pour d'autres valeurs des $k_i$.
\end{enumerate}
\end{thm}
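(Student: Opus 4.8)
The plan is to prove (i) and (ii) by contraposition: assuming the conclusion (\ref{concki}) fails (or is merely an equality, in the situation of (ii)), I would manufacture a singular point of $\{F_1=\dots=F_c=0\}$, contradicting smoothness. The first step is to read $\deg_\alpha(F_i)$ off the coordinate flag attached to $\rho$. Setting $\delta_r=\alpha_r-\alpha_{r-1}\geq 0$ and $M_r=\{X_0=\dots=X_{r-1}=0\}$ (a linear subspace of codimension $r$), an Abel summation gives $\deg_\alpha(X^a)=\alpha_0 d_i+\sum_{r=1}^N\delta_r(\deg_{X_r,\dots,X_N}X^a)$ for each monomial. For the single--jump weight $v^{(r)}$ generating the $r$-th extremal ray of the cone $\{\alpha_0\leq\dots\leq\alpha_N,\ \sum_j\alpha_j=0\}$ this collapses to the clean formula
$$\deg_{v^{(r)}}(F_i)=r\,d_i-(N+1)\mu_{r,i},$$
where $\mu_{r,i}=\mathrm{mult}_{M_r}(F_i)$ is the order of vanishing of $F_i$ along $M_r$. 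Writing $\kappa=\sum_i k_i$, the target inequality against $v^{(r)}$ becomes the multiplicity bound $\sum_i\frac{k_i}{d_i}\mu_{r,i}\leq\frac{r}{N+1}\kappa$, and one checks that (\ref{hypki}) (which forces $\kappa\geq 0$ and all $k_i\geq 0$) is exactly what is needed to pass from such bounds to (\ref{concki}).

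The heart of the argument is therefore a geometric lemma: \emph{if $X$ is smooth, the multiplicities $\mu_{r,i}$ cannot be too large.} I would prove it by contradiction, via the Jacobian criterion. Vanishing of $F_i$ to order $\mu_{r,i}$ along $M_r$ means its lowest-degree part in the normal directions to $M_r$ has degree $\mu_{r,i}$; restricting the first-order terms of the $F_i$ to $M_r\cong\mathbb{P}^{N-r}$ produces a system of forms on $M_r$ whose common degeneracy locus is precisely where $X$ is singular along $M_r$, and a dimension count shows this locus is nonempty once the $\mu_{r,i}$ are large relative to $N-r$ and $c$. For $c=1$ this is transparent: if $M_r\subset X$ then $F=\sum_{s<r}X_sA_s$, and the $A_s|_{M_r}$ (there are $r$ of them, of degree $d-1$ on $\mathbb{P}^{N-r}$) have a common zero as soon as $r\leq N-r$, giving a singular point; this recovers the classical bound on linear subspaces of smooth hypersurfaces and yields $\mu_{r,1}\leq rd/(N+1)$.

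The main obstacle, and the reason the proof is delicate, is that for a \emph{general} weight $\alpha$ the statement does \emph{not} reduce to these ray-by-ray multiplicity bounds. Since $\deg_\alpha(F_i)=\max_a\langle\alpha,a\rangle$ is convex in $\alpha$, checking it on the extremal rays of the cone controls it only from above, the wrong direction; and a single monomial of $F_i$ cannot in general realise the minimal low-variable degree $\mu_{r,i}$ for every $r$ at once (already $X_0X_2+X_1^2$ shows this), so $\deg_\alpha(F_i)$ is a genuine maximum over the Newton polytope rather than an affine function of the $\mu_{r,i}$. I would therefore run the Jacobian descent \emph{directly} for the given $\alpha$: starting from the highest-weight coordinate point and descending the flag, the rank-$c$ smoothness condition supplies at each stage a nonvanishing $c\times c$ Jacobian minor, i.e.\ a Hall-type matching that forces the presence in the $F_i$ of monomials of controlled $\alpha$-degree, until either enough monomials are produced to prove (\ref{concki}) or the descent terminates at an honest singular point. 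The weight $\tfrac1{N+1}$ and the precise form of (\ref{hypki}) arise from balancing the $c$ equations against the $N+1$ variables in this matching.

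For the strict statement (ii) I would track the equality cases of the Jacobian step: equality can propagate only when every linear-algebra step is forced to be an isomorphism, a rigidity that fails precisely for smooth quadrics, which explains the single exception $c=1,\,d_1=2$ (a smooth quadric is fixed by a one-parameter subgroup and is merely semistable). For the optimality statement (iii) I would exhibit explicit smooth complete intersections saturating the estimates: choosing one equation to vanish to maximal order along a coordinate subspace while the remaining equations keep $X$ smooth produces, for any $(k_i)$ violating (\ref{hypki}), a pair $(\alpha,X)$ with $\sum_i\frac{k_i}{d_i}\deg_\alpha(F_i)<0$; and a smooth quadric realises equality in (\ref{concki}) with (\ref{hypki}) strict, showing that the hypothesis of (ii) is sharp as well.
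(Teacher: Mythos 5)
Your proposal correctly identifies both the right tool (the Jacobian criterion tying small $\alpha$-degree to singularities along the coordinate flag, which is the paper's lemme \ref{singdeg}) and, to your credit, the central obstruction: since $\deg_\alpha(F_i)$ is a maximum of linear functions of $\alpha$, positivity of $\sum_i k_i\deg_\alpha(F_i)/d_i$ on the extremal rays $v^{(r)}$ of the weight cone implies nothing for a general $\alpha$, so your clean multiplicity bounds $\sum_i (k_i/d_i)\mu_{r,i}\leq r\kappa/(N+1)$ cannot be ``integrated'' into (\ref{concki}). But at exactly the point where you concede this, the proposal stops being a proof. ``Run the Jacobian descent directly for the given $\alpha$,'' with a ``Hall-type matching'' forcing ``monomials of controlled $\alpha$-degree'' until the descent ``terminates at an honest singular point,'' describes a hoped-for mechanism without exhibiting it: you specify neither which monomials are forced, nor with what $\alpha$-degrees, nor how those degrees are to be summed against the $k_i$ to yield (\ref{concki}), nor where the threshold $\frac{1}{N+1}$ actually enters. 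The paper's proof shows how much genuine work sits in that gap: for a single equation one needs the invariants $s(F)$ and $v_s(F)$ together with the estimates of lemmes \ref{alisse} and \ref{nonalisse} and the positivity lemme \ref{positivite}; then a global assembly in which one selects the equation $F_l$ maximizing $\sum_{s}\alpha_{v_s(F_l)}/(N-s(F_l))$, uses the projective dimension theorem plus smoothness of the full intersection (not of each hypersurface) to produce $s(F_l)+1$ \emph{distinct} indices $j_s$ with $\deg_\alpha(F_{j_s})\geq d_{j_s}\alpha_{v_s(F_l)}$, bounds every remaining equation from below via (\ref{minor2}), and finally verifies the identity rewriting the total (\ref{miracle}) as a sum of products of two factors whose nonnegativity is exactly (\ref{hypki}) plus lemme \ref{positivite}. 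Nothing in your sketch substitutes for this bookkeeping, and it is precisely where the theorem lives.

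Two further points. In (ii), the case $d_1=\dots=d_c=2$ with $c\geq 2$ genuinely escapes any strict version of the single-equation bound (the strictness in lemme \ref{alisse} requires $d\geq 3$), and the paper must treat it by a separate argument: the pencil $\det(\lambda_1A_1+\lambda_2A_2)$ of Jacobian matrices of two of the quadrics has a zero, producing a point of the flag subspace where two quadrics are tangent, contradicting smoothness of the intersection. Your remark that ``rigidity fails precisely for smooth quadrics'' correctly names the excluded case $c=1$, $d_1=2$, but proves nothing in the all-quadrics case $c\geq2$, where strictness still has to hold. Your part (iii), by contrast, is essentially the paper's own construction (one generic equation seeing $X_N$, the others generic in $X_0,\dots,X_{N-1}$, plus Bertini, and the smooth quadric $X_0X_N+Q(X_1,\dots,X_{N-1})$ to justify excluding $c=1$, $d_1=2$) and would go through as stated.
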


Pr\'ecisons le sens de (iii). Dire que l'\'enonc\'e (i) est optimal signifie que si $k_1,\dots, k_c$
sont des r\'eels ne v\'erifiant pas (\ref{hypki}), il existe des
entiers non tous nuls de somme nulle $\alpha_0\leq\dots\leq\alpha_N$ et des \'equations $F_1,\dots, F_c$
d\'efinissant une intersection compl\`ete lisse tels que l'in\'egalit\'e (\ref{concki}) soit fausse.
L'assertion concernant l'\'enonc\'e (ii) est analogue.

L'in\'egalit\'e \ref{alphadeg} permet de minorer
les $\alpha$-degr\'es des \'equations d'une intersection compl\`ete lisse.
Son heuristique est la suivante : si les $\alpha$-degr\'es des \'equations d'une intersection compl\`ete sont petits,
cela signifie que beaucoup de mon\^omes n'interviennent pas dans ces \'equations. Ce fait doit permettre
de montrer, via le crit\`ere jacobien, que cette intersection compl\`ete est en fait singuli\`ere.

Le paragraphe \ref{uneequation} est constitu\'e de r\'esultats pr\'eliminaires autour du lien entre $\alpha$-degr\'e
d'une \'equation $F$ et singularit\'es
de $\{F=0\}$ ; le paragraphe \ref{pleindequations} est consacr\'e \`a la preuve du th\'eor\`eme \ref{alphadeg}.

\subsection{\'Etude d'une \'equation}\label{uneequation}

On fixe dans tout ce paragraphe un entier $d\geq2$ et une \'equation non nulle $F\in H^0(\mathbb{P}^N_K,\mathcal{O}(d))$.
Le lemme \ref{singdeg} permettra de faire le lien entre la g\'eom\'etrie de l'hypersurface $\{F=0\}$ et le  $\alpha$-degr\'e $\deg_{\alpha}(F)$.

\begin{lemme}\label{singdeg}
Soient $u$, $v$ et $s$ des entiers tels que $u,v\geq0$, $s\geq0$ et $u+v=N-s$.
Alors, si $\deg_{\alpha}(F)<\alpha_u+(d-1)\alpha_v$, $$\dim(\Sing(\{F=0\})\cap\{X_0=\dots=X_{v-1}=0\})\geq s.$$
\end{lemme}

\begin{proof}[$\mathbf{Preuve}$]
Comme $d\geq 2$ et les $\alpha_i$ sont croissants, quitte \`a \'echanger $u$ et $v$, on peut supposer que $u\leq v$.
\'Ecrivons alors $F=X_0P_0+\dots+X_NP_N$, o\`u $P_i$ ne d\'epend
pas de $X_0,\dots,X_{i-1}$. L'hypoth\`ese
$\deg_{\alpha}(F)<\alpha_u+(d-1)\alpha_v$ montre que si $i\geq u$,
$P_i$ ne d\'epend que de $X_0,\dots, X_{v-1}$.

Posons $Z=\{X_0=\dots=X_{v-1}=P_0=\dots=P_{u-1}=0\}$. Si $i\leq v-1$, $X_i$ est nul sur $Z$.
Si $i\geq v$, on a $i\geq u$, de sorte que $P_i$, qui ne d\'epend que de $X_0,\dots, X_{v-1}$, est nul sur $Z$.
Par cons\'equent, $F=\sum_{i=0}^N X_i P_i$ est nul sur $Z$. 
De m\^eme, pour $0\leq j\leq N$, on peut \'ecrire 
$\frac{\partial F}{\partial X_j}=P_j+\sum_{i=0}^NX_i\frac{\partial P_i}{\partial X_j}$. En distinguant comme ci-dessus les cas $i\leq v-1$
et $i\geq v$, on voit que $X_i\frac{\partial P_i}{\partial X_j}$ s'annule sur $Z$. De plus, si $j<u$, $P_j$ est nul sur $Z$ et si $j\geq u$,
$P_j$ qui ne d\'epend que de $X_0,\dots, X_{v-1}$, est aussi nul sur $Z$.
Sommant, on voit que $\frac{\partial F}{\partial X_j}$ est nul sur $Z$.

On a montr\'e que $F$ et tous les
$\frac{\partial F}{\partial X_j}$ s'annulent sur $Z$, de sorte que,
par le crit\`ere jacobien, $Z\subset\Sing(\{F=0\})$. Comme, par le
th\'eor\`eme de l'intersection projective, $\dim(Z)\geq N-u-v=s$,
le lemme est d\'emontr\'e.
\end{proof}

Le lemme \ref{singdeg} motive la d\'efinition qui suit.

\begin{defi}
On note $s(F)$ le plus petit entier $s\in\{-1,\dots,N-1\}$ tel que, si $u,v\geq 0$ sont des entiers avec $u+v=N-s-1$, on a
$\deg_{\alpha}(F)\geq\alpha_u+(d-1)\alpha_v$.

Supposons que $0\leq s \leq s(F)$.
On note $v_s(F)$ le plus grand entier $v\in\{0,\dots,N-s\}$ tel que $\deg_{\alpha}(F)<\alpha_{N-v_s(F)-s}+(d-1)\alpha_{v_s(F)}$.
\end{defi}

\begin{lemme}\label{vFex}
L'entier $s(F)$ est bien d\'efini.

Soit $0\leq s \leq s(F)$. Alors $v_s(F)$ est bien d\'efini et $v_s(F)\geq \frac{N+s(F)-2s}{2}$.
De plus, si $0<s\leq s(F)$, on a $v_{s-1}(F)\geq v_{s}(F)+1$.
\end{lemme}

\begin{proof}[$\mathbf{Preuve}$]
Comme le plus petit $\alpha$-degr\'e possible d'un mon\^ome de degr\'e $d$ est $d\alpha_0$, 
on a $\deg_\alpha(F)\geq d\alpha_0$, de sorte que $s(F)$ est bien d\'efini.

Par d\'efinition de $s(F)$, et comme $s(F)\geq 0$, il existe $u,v\geq 0$ tels que $u+v=N-s(F)$ et
$\deg_{\alpha}(F)<\alpha_u+(d-1)\alpha_v$. Comme $d\geq 2$ et que les $\alpha_i$ sont croissants, quitte \`a 
\'echanger $u$ et $v$, on peut supposer $v\geq u$, soit $v\geq \frac{N-s(F)}{2}$. Alors, si $v'=v+s(F)-s$, comme les $\alpha_i$
sont croissants, on a $\deg_{\alpha}(F)<\alpha_u+(d-1)\alpha_{v'}$.
Ceci montre d'une part l'existence de $v_s(F)$ et d'autre part que $v_s(F)\geq v'\geq\frac{N-s(F)}{2}+s(F)-s=\frac{N+s(F)-2s}{2}$.

Finalement, supposons $0<s\leq s(F)$. Comme $\deg_{\alpha}(F)<\alpha_{N-s-v_{s}(F)}+(d-1)\alpha_{v_{s}(F)}$,
par croissance des $\alpha_i$, on obtient
$\deg_{\alpha}(F)<\alpha_{N-s-v_s(F)}+(d-1)\alpha_{v_{s}(F)+1}$, ce qui montre que $v_{s-1}(F)\geq v_{s}(F)+1$.
\end{proof}

Le lemme ci-dessous permet d'interpr\'eter
l'entier $s(F)$ comme la dimension attendue, connaissant $\deg_{\alpha}(F)$, de $\Sing(\{F=0\})$.
Les entiers $v_s(F)$ indiquent, eux, la mani\`ere dont on s'attend
\`a ce que les singularit\'es de $\{F=0\}$
se situent par rapport au drapeau
$\varnothing\subset\{X_0=\dots=X_{N-1}=0\}\subset\dots\subset\{X_0=0\}\subset\mathbb{P}^N_K$.

\begin{lemme}\label{lemsing}
On a $\dim(\Sing(\{F=0\}))\geq s(F)$.

Si $0\leq s \leq s(F)$,
$\dim(\Sing(\{F=0\})\cap\{X_0=\dots=X_{v_s(F)-1}=0\})\geq s$.
\end{lemme}

\begin{proof}[$\mathbf{Preuve}$]
C'est une cons\'equence imm\'ediate du lemme \ref{singdeg} et des d\'efinitions.
\end{proof}

Les deux lemmes suivants permettent de minorer le $\alpha$-degr\'e des \'equations telles que $s(F)=-1$ (resp. $s(F)\geq 0$).

\begin{lemme}\label{alisse}
Supposons que $s(F)=-1$. Alors
$\deg_{\alpha}(F)\geq 0$.

De plus, cette in\'egalit\'e est stricte si $d\geq 3$.
\end{lemme}

\begin{proof}[$\mathbf{Preuve}$]
On calcule :
\begin{alignat*}{3}
  N\deg_{\alpha}(F) &\geq(\alpha_0+(d-1)\alpha_N)+\dots+(\alpha_{N-1}+(d-1)\alpha_1) &&
\text{ car }s(F)=-1 \\
      &= -\alpha_N-(d-1)\alpha_0 &&\text{ car }\sum_i\alpha_i=0\\
 \frac{1}{d-1}\deg_{\alpha}(F) & \geq\frac{1}{d-1}\alpha_0+\alpha_N&&\text{ car }s(F)=-1.
\end{alignat*}
Sommant ces deux in\'egalit\'es, on obtient
$\deg_{\alpha}(F)\geq-\frac{d(d-2)}{Nd-N+1}\alpha_0$.
Ceci conclut car $d\geq 2$ et $\alpha_0<0$ (les $\alpha_i$ sont croissants non tous nuls de somme nulle).
\end{proof}

\begin{lemme}\label{nonalisse}
Supposons que $s(F)\geq 0$. Alors :
 $$\frac{\deg_{\alpha}(F)}{d}\geq-\frac{\sum_{s=0}^{s(F)}\alpha_{v_s(F)}}{N-s(F)}.$$
\end{lemme}

\begin{proof}[$\mathbf{Preuve}$]
Par d\'efinition de $v_0(F)$, on a :
\begin{alignat}{2}
(N-v_0(F))\deg_{\alpha}(F) &\geq(\alpha_0+(d-1)\alpha_N) && \nonumber \\
      &+\dots+(\alpha_{N-v_0(F)-1}+(d-1)\alpha_{v_0(F)+1}).&&
\label{in1}
\end{alignat}
Pour $0< s\leq s(F)$, par d\'efinition de $v_s(F)$, et comme $v_{s-1}(F)\geq v_{s}(F)+1$ par le lemme \ref{vFex}, on a :
\begin{alignat}{2}
(v_{s-1}(F)-v_s(F)-1)&\deg_{\alpha}(F) \geq (\alpha_{N-s-v_{s-1}(F)+1}+(d-1)\alpha_{v_{s-1}(F)-1}) && \nonumber \\
      &+\dots+(\alpha_{N-s-v_s(F)-1}+(d-1)\alpha_{v_s(F)+1}).&&
\label{in2}
\end{alignat}
Le lemme \ref{vFex} montre que $2v_{s(F)}(F)+s(F)-N\geq 0$.
Ceci permet d'\'ecrire, utilisant la d\'efinition de $s(F)$ :
\begin{alignat}{2}
(2v_{s(F)}+s(F)-N)&\deg_{\alpha}(F) \geq(\alpha_{N-s(F)-v_{s(F)}(F)}+(d-1)\alpha_{v_{s(F)}(F)-1}) && \nonumber \\
      &+\dots+(\alpha_{v_{s(F)}(F)-1}+(d-1)\alpha_{N-s(F)-v_{s(F)}(F)}).&&
\label{in3}
\end{alignat}
Sommant deux fois l'in\'egalit\'e (\ref{in1}), deux fois les in\'egalit\'es (\ref{in2}) et l'in\'egalit\'e (\ref{in3}), on obtient :
\begin{alignat}{3}
(N-s(F))&\deg_{\alpha}(F) \geq2(\alpha_0+\dots+\alpha_{N-s(F)-v_{s(F)}(F)-1})&& \nonumber \\
&+d(\alpha_{N-s(F)-v_{s(F)}(F)}+\dots+\alpha_{v_{s(F)}(F)-1}) &&\label{in4}  \\
      &+
(2d-2)(\alpha_{v_{s(F)}(F)}+\dots+\alpha_{N})-(2d-2)\sum_{s=0}^{s(F)}\alpha_{v_s(F)}.&&\nonumber
\end{alignat}
Remarquons alors que : 
$$0\geq[\alpha_0+\dots+\alpha_{N-s(F)-v_{s(F)}(F)-1}]-[\alpha_{v_{s(F)}(F)}+\dots+\alpha_{N}-\sum_{s=0}^{s(F)}\alpha_{v_s(F)}].$$
En effet, chacun des crochets est une somme de $N-s(F)-v_{s(F)}(F)$ des $\alpha_i$. Les indices intervenant dans le premier crochet sont tous plus petits que
les indices intervenant dans le second, de sorte que l'on conclut par croissance des $\alpha_i$.

Multipliant cette \'equation par $(d-2)\geq 0$, et l'ajoutant \`a (\ref{in4}), on obtient :
$$(N-s(F))\deg_{\alpha}(F) \geq d(\alpha_{0}+\dots+\alpha_{N})-d\sum_{s=0}^{s(F)}\alpha_{v_s(F)}.$$
Comme les $\alpha_i$ sont de somme nulle, cela prouve l'in\'egalit\'e voulue.
\end{proof}

Finalement, montrons une propri\'et\'e de positivit\'e des $\alpha_{v_s(F)}$ :

\begin{lemme}\label{positivite}
Supposons que $s(F)\geq 0$. Alors : $$\alpha_{v_{s(F)}(F)}+\frac{\sum_{s=0}^{s(F)}\alpha_{v_s(F)}}{N-s(F)}>0.$$
\end{lemme}

\begin{proof}[$\mathbf{Preuve}$]
En sommant les in\'egalit\'es (\ref{in1}) et (\ref{in2}) de la preuve du lemme \ref{nonalisse}, on obtient :
\begin{alignat}{2}
(N-s(F)-v_{s(F)}&(F))\deg_{\alpha}(F) \geq(\alpha_0+\dots+\alpha_{N-s(F)-v_{s(F)}(F)-1})&& \nonumber \\
      &+
(d-1)(\alpha_{v_{s(F)}(F)}+\dots+\alpha_{N})-(d-1)\sum_{s=0}^{s(F)}\alpha_{v_s(F)}.&&
\label{in5}
\end{alignat}

Par d\'efinition de $v_{s(F)}(F)$, $\deg_{\alpha}(F)<\alpha_{N-s(F)-v_{s(F)}(F)}+(d-1)\alpha_{v_{s(F)}(F)}$. Comme, par le lemme \ref{vFex},
$N-s(F)-v_{s(F)}(F)\leq v_{s(F)}(F)$, la croissance des $\alpha_i$ montre $\deg_{\alpha}(F)<d \alpha_{v_{s(F)}(F)}$. Combinons ce fait avec
l'in\'egalit\'e (\ref{in5}), puis utilisons le fait que $d\geq 2$ et que les $\alpha_i$ sont croissants.
\begin{alignat*}{4}
(N-&s(F)&&-v_{s(F)}(F))d\alpha_{v_{s(F)}(F)}+(d-1)\sum_{s=0}^{s(F)}\alpha_{v_s(F)} &&&\nonumber \\
& >&&(\alpha_0+\dots+\alpha_{N-s(F)-v_{s(F)}(F)-1})+(d-1)(\alpha_{v_{s(F)}(F)}+\dots+\alpha_{N}) &&&\nonumber \\
&\geq&&(d-1)(\alpha_0+\dots+\alpha_{N-s(F)-v_{s(F)}(F)-1}+\alpha_{v_{s(F)}(F)}+\dots+\alpha_{N}) &&&\nonumber \\
& &&-(N-s(F)-v_{s(F)}(F))(d-2)\alpha_{v_{s(F)}(F)}.&&&\nonumber
\end{alignat*}
Utilisons que les $\alpha_i$ sont de somme nulle, puis \`a nouveau leur croissance :
\begin{alignat*}{4}
(N-&s(F)-v_{s(F)}(F))(2d-2)\alpha_{v_{s(F)}(F)}+(d-1)\sum_{s=0}^{s(F)}\alpha_{v_s(F)} &&\nonumber \\
&>-(d-1)(\alpha_{N-s(F)-v_{s(F)}(F)}+\dots+\alpha_{v_{s(F)}(F)-1}) &&\nonumber\\
&\geq -(2v_{s(F)}(F)+s(F)-N)(d-1)\alpha_{v_{s(F)}(F)}. &&\nonumber
\end{alignat*}
On obtient l'in\'egalit\'e voulue en divisant par $(d-1)(N-s(F))>0$.
\end{proof}

\subsection{\'Equations d'une intersection compl\`ete lisse}\label{pleindequations}

Utilisons les r\'esultats du paragraphe pr\'ec\'edent pour montrer le th\'eor\`eme \ref{alphadeg}.

\begin{proof}[$\mathbf{Preuve \text{ }du \text{ }th\acute{e}or\grave{e}me\text{ }\ref{alphadeg}\text{ }(i)}$]~

  Tout d'abord, en sommant pour $i\in \{1,\dots,c\}$ les in\'egalit\'es $k_i\geq\frac{k_1+\dots+k_c}{N+1}$, 
on montre $(N+1-c)(k_1+\dots+k_c)\geq0$, donc $k_1+\dots+k_c\geq0$, et finalement,
$k_i\geq0$ pour $i\in \{1,\dots, c\}$.

Distinguons alors deux cas. Si $s(F_i)=-1$ pour tout $i\in \{1,\dots,c\}$,
le lemme \ref{alisse} montre que 
$\deg_{\alpha}(F_i)\geq 0$. Ainsi, $\sum_{i=1}^c k_i\frac{\deg_{\alpha}(F_i)}{d_i}\geq0$.

Supposons au contraire qu'il existe $l$ tel que $s(F_l)\geq 0$. 
On choisit un tel $l$ de sorte que $\frac{\sum_{s=0}^{s(F_l)}\alpha_{v_s(F_l)}}{N-s(F_l)}$ soit maximal. On va construire des entiers
$j_0,\dots, j_{s(F_l)}\in\{1,\dots,c\}$ distincts tels que, pour $0\leq s \leq s(F_l)$,  
\begin{equation}
\frac{\deg_{\alpha}(F_{j_s})}{d_{j_s}}\geq\alpha_{v_s(F_l)}.\label{minor1}
\end{equation}

Supposons $j_0,\dots,j_{s-1}$ convenables, et construisons $j_s$. Par le lemme \ref{lemsing},
$\dim(\Sing(\{F_l=0\})\cap\{X_0=\dots=X_{v_s(F_l)-1}=0\})\geq s.$ Le th\'eor\`eme de l'intersection projective implique que
$\dim(\Sing(\{F_l=0\})\cap\{X_0=\dots=X_{v_s(F_l)-1}=F_{j_0}=\dots=F_{j_{s-1}}=0\})\geq 0$. Ce ferm\'e est donc non vide ; on y choisit
un point ferm\'e $P$. Comme $\{F_1=\dots=F_c=0\}$ est une intersection compl\`ete lisse,
elle ne peut contenir le point singulier $P$ de $\{F_l=0\}$ : il existe $j_s$ tel que $F_{j_s}$ soit non nul en $P$. 
Comme $F_{j_0},\dots,F_{j_{s-1}}$ s'annulent en $P$,
$j_s\notin\{j_0,\dots,j_{s-1}\}$. Enfin, comme $P\in \{X_0=\dots=X_{v_s(F_l)-1}=0\}$,
on a $\{X_0=\dots=X_{v_s(F_l)-1}=0\}\not\subset \{F_{j_s}=0\}$. 
En consid\'erant les mon\^omes intervenant dans $F_{j_s}$, on voit que cela implique $\deg_{\alpha}(F_{j_s})\geq d_{j_s}\alpha_{v_s(F_l)}$, 
comme voulu.

Soit maintenant $i\in\{1,\dots, c\}$ quelconque. Montrons que : 
\begin{equation}
\frac{\deg_{\alpha}(F_i)}{d_i}\geq-\frac{\sum_{s=0}^{s(F_l)}\alpha_{v_s(F_l)}}{N-s(F_l)}.\label{minor2}
\end{equation}
Si $s(F_i)\geq 0$, cela r\'esulte du lemme \ref{nonalisse} et du choix de $l$. Si $s(F_i)=-1$, on raisonne comme suit.
Par le lemme \ref{positivite}, $\alpha_{v_{s(F_l)}(F_l)}+\frac{\sum_{s=0}^{s(F_l)}\alpha_{v_s(F_l)}}{N-s(F_l)}>0$.
Comme, par le lemme \ref{vFex}, $\alpha_{v_{s(F_l)}(F_l)}$ est le plus petit des $(\alpha_{v_{s}(F_l)})_{0\leq s\leq s(F_l)}$, on en d\'eduit :
$\sum_{s=0}^{s(F_l)}\alpha_{v_s(F_l)}>0$. Par le lemme \ref{alisse}, on a donc 
$\frac{\deg_{\alpha}(F_i)}{d_i}\geq 0\geq-\frac{\sum_{s=0}^{s(F_l)}\alpha_{v_s(F_l)}}{N-s(F_l)}$.

On peut alors conclure. Notons $I=\{j_0,\dots,j_{s(F_l)}\}$ et utilisons les minorations (\ref{minor1})
pour $i\in I$ et (\ref{minor2}) pour $i\notin I$. On obtient :
\begin{equation}
 \sum_{i=1}^c k_i\frac{\deg_{\alpha}(F_i)}{d_i}\geq \sum_{s=0}^{s(F_l)} k_{j_s}\alpha_{v_s(F_l)}
-\Big(\sum_{i\notin I}k_i\Big)\frac{\sum_{s=0}^{s(F_l)}\alpha_{v_s(F_l)}}{N-s(F_l)}.\label{minorons}
\end{equation}
Montrons que le terme de droite de (\ref{minorons}) co\"incide avec :
\begin{equation}
\sum_{s=0}^{s(F_l)}\Big(\alpha_{v_s(F_l)}+\frac{\alpha_{v_0(F_l)}+\dots+\alpha_{v_{s(F_l)}(F_l)}}{N-s(F_l)}\Big)\Big(k_{j_s}-\frac{k_1+\dots+k_c}{N+1}\Big).
\label{miracle}
\end{equation}
Pour cela, on d\'eveloppe (\ref{miracle}), et on identifie les coefficients des $k_i$ avec ceux apparaissant dans le terme de droite de
(\ref{minorons}). Si $i\notin I$, ce coefficient vaut :
\begin{alignat*}{3}
-\frac{1}{N+1}\sum_{s=0}^{s(F_l)}\Big(\alpha_{v_s(F_l)}&+\frac{\alpha_{v_0(F_l)}+\dots+\alpha_{v_{s(F_l)}(F_l)}}{N-s(F_l)}\Big)\\
                         &=-\frac{1}{N+1}\Big(\frac{s(F_l)+1}{N-s(F_l)}+1\Big)\sum_{s=0}^{s(F_l)}\alpha_{v_s(F_l)}\\
                         &=-\frac{1}{N-s(F_l)}\sum_{s=0}^{s(F_l)}\alpha_{v_s(F_l)}.
\end{alignat*}
Si $i=j_s\in I$, un terme suppl\'ementaire appara\^it, de sorte que ce coefficient vaut bien :
$$-\frac{1}{N-s(F_l)}\sum_{s=0}^{s(F_l)}\alpha_{v_s(F_l)}+\alpha_{v_s(F_l)}+\frac{\alpha_{v_0(F_l)}+\dots+\alpha_{v_{s(F_l)}(F_l)}}{N-s(F_l)}=\alpha_{v_s(F_l)}.$$

Il reste \`a montrer que (\ref{miracle}) est positif ou nul. Comme, par le lemme \ref{vFex}, $\alpha_{v_{s(F_l)}(F_l)}$ 
est le plus petit des $(\alpha_{v_{s}(F_l)})_{0\leq s\leq s(F_l)}$,
le lemme \ref{positivite} montre que le premier facteur des termes de la somme (\ref{miracle}) est positif.
Le second facteur des termes de cette somme est
positif ou nul par hypoth\`ese sur les $k_i$. 
\end{proof}

\begin{proof}[$\mathbf{Preuve \text{ }du \text{ }th\acute{e}or\grave{e}me\text{ }\ref{alphadeg}\text{ }(ii)}$]~

  Tout d'abord, en sommant pour $i\in \{1,\dots c\}$ les in\'egalit\'es $k_i>\frac{k_1+\dots+k_c}{N+1}$, 
on montre $(N+1-c)(k_1+\dots+k_c)>0$, donc $k_1+\dots+k_c>0$, et finalement,
$k_i>0$ pour $i\in \{1,\dots, c\}$. 

  On effectue alors la m\^eme preuve que ci-dessus. Le second cas, o\`u il existe $i$ tel que $s(F_i)\geq 0$, est identique :
on obtient une in\'egalit\'e stricte gr\^ace aux hypoth\`eses plus fortes $k_i>\frac{k_1+\dots+k_c}{N+1}$ et \`a l'in\'egalit\'e stricte 
dans le lemme \ref{positivite}.

  Dans le premier cas, o\`u $s(F_i)=-1$ pour tout $i$, on raisonne de m\^eme pour montrer que $\sum_{i=1}^c k_i\frac{\deg_{\alpha}(F_i)}{d_i}\geq 0$. 
Comme $k_i>0$, et par le
cas de stricte in\'egalit\'e du lemme \ref{alisse}, on obtient une in\'egalit\'e stricte sauf \'eventuellement si $d_1=\dots=d_c=2$.
L'\'etude du cas d'\'egalit\'e montre qu'on peut alors supposer
$\deg_{\alpha}(F_1)=\dots=\deg_{\alpha}(F_c)=0$ et $\alpha_i+\alpha_{N-i}=0$ pour $0\leq i\leq N$.

Traitons ce cas directement ; rappelons que par hypoth\`ese, on a alors $c\geq 2$. 
Soit $0\leq r\leq N$ le plus petit entier tel que $\alpha_r>0$. Comme $\alpha_i+\alpha_{N-i}=0$, $r'=N-r$ est le plus grand entier
tel que $\alpha_{r'}<0$. Comme $\deg_{\alpha}(F_i)=0$, on voit que $\{X_0=\dots=X_{r-1}=0\}\subset\{F_i=0\}$, de sorte que $\{X_0=\dots=X_{r-1}=0\}$ est
inclus dans l'intersection compl\`ete $\{F_1=\dots=F_c=0\}$. Montrons qu'il existe un point de $\{X_0=\dots=X_{r-1}=0\}$ en lequel $\{F_1=0\}$ et $\{F_2=0\}$
ont m\^eme espace tangent. Cela contredira la lissit\'e en ce point de $\{F_1=\dots=F_c=0\}$.

Comme $\deg_{\alpha}(F_1)=0$, $\frac{\partial F_1}{\partial X_i}([0:\dots:0:x_r:\dots:x_N])$ est nul si $i>r'$ ; c'est une forme lin\'eaire en
$x_r,\dots,x_N$ si $i\leq r'$. Notons $A_1$ la matrice $(r'+1)\times(r'+1)$ dont les lignes sont les formes lin\'eaires 
$(\frac{\partial F_1}{\partial X_i})_{0\leq i\leq r'}$. De m\^eme, on note $A_2$ la matrice $(r'+1)\times(r'+1)$ dont les lignes sont les formes lin\'eaires 
$(\frac{\partial F_2}{\partial X_i})_{0\leq i\leq r'}$. Consid\'erons $\det(\lambda_1 A_1+\lambda_2 A_2)$ : c'est un polyn\^ome homog\`ene en $\lambda_1$ et
$\lambda_2$. Comme $K$ est alg\'ebriquement clos, on peut trouver $(\lambda_1,\lambda_2)\neq(0,0)$ tels que $\det(\lambda_1 A_1+\lambda_2 A_2)=0$.
Il existe donc $(x_r,\dots, x_N)\neq(0,\dots,0)$ tel que $(\lambda_1 A_1+\lambda_2 A_2)(x_r,\dots, x_N)=0$. 
Alors $\frac{\partial (\lambda_1F_1+\lambda_2 F_2)}{\partial X_i}([0:\dots:0:x_r:\dots:x_N])=0$ pour tout $i$ : c'est ce qu'on voulait.
\end{proof}

\begin{proof}[$\mathbf{Preuve \text{ }du \text{ }th\acute{e}or\grave{e}me\text{ }\ref{alphadeg}\text{ }(iii)}$]~

Montrons l'optimalit\'e dans le cas (i) : supposons donn\'es des r\'eels $k_1,\dots,k_c$ tels que la conclusion de (i) soit satisfaite. Fixons $1\leq j\leq c$.
On choisit $\alpha_0=\dots=\alpha_{N-1}=-1$ et $\alpha_N=N$. Si $i\neq j$, on choisit pour $F_i$ une \'equation g\'en\'erique ne faisant pas intervenir la variable
$X_N$ : en particulier $\deg_{\alpha}(F_i)=-d_i$. 
Par le th\'eor\`eme de Bertini, l'intersection des $\{F_i=0\}_{i\neq j}$ a $[0:\dots:0:1]$ comme unique point singulier. On choisit une \'equation $F_j$
g\'en\'erique, qui \'evite ce point singulier. Le mon\^ome $X_N^{d_j}$ intervient donc dans $F_j$ de sorte que $\deg_{\alpha}(F_j)=N d_j$. De plus,
par le th\'eor\`eme de Bertini, $\{F_1=\dots=F_c=0\}$ est lisse. On peut donc \'ecrire
$$\sum_{i=1}^ck_i\deg_{\alpha}(F_i)=\sum_{i\neq j}-k_i+Nk_j\geq0.$$
Ceci se r\'e\'ecrit $k_j\geq\frac{k_1+\dots+k_c}{N+1}$ comme voulu.

Dans le cas (ii), la m\^eme preuve fonctionne. Il faut seulement v\'erifier qu'il \'etait n\'ecessaire d'exclure le cas $c=1$ et $d_1=2$. 
Pour cela, on prend $\alpha_0=-1$, $\alpha_i=0$ pour $1\leq i\leq N-1$ et $\alpha_N=1$. On choisit alors $F_1=X_0X_N+Q(X_1,\dots,X_{N-1})$ o\`u $Q$ est une 
forme quadratique ordinaire en $X_1,\dots,X_{N-1}$. Alors $\{F_1=0\}$ est lisse, mais $\deg_{\alpha}(F_1)=0$.
\end{proof}

\section{Hilbert-stabilit\'e}\label{hilbstab}

Dans cette partie, on conserve les conventions \ref{notationsgen} et \ref{notationsalpha}, \`a ceci pr\`es qu'on autorise 
$1\leq c\leq N-1$ et $2\leq d_1\leq \dots\leq d_c$.

Dans \cite{Mumfordstab}, Mumford propose de construire des espaces de modules
quasi-projectifs en appliquant la th\'eorie g\'eom\'etrique des invariants
au sch\'ema de Hilbert. On sp\'ecialise ici cette strat\'egie
au cas des intersections compl\`etes, et on fait le
lien avec les r\'esultats des paragraphes pr\'ec\'edents.
On obtient en particulier (Corollaire \ref{conditionhilb}) une condition n\'ecessaire de Hilbert-stabilit\'e des
intersections compl\`etes.

On note $P$ le polyn\^ome de Hilbert des intersections compl\`etes, et
$\Hilb^P_{\mathbb{P}^N}$ le sch\'ema de Hilbert de $\mathbb{P}^N$ correspondant :
c'est un sch\'ema projectif sur $\Spec(\mathbb{Z})$.
Si $Z$ est un sous-sch\'ema de $\mathbb{P}^N_K$ de polyn\^ome de Hilbert $P$ (par exemple une intersection compl\`ete),
on note $[Z]$ le point g\'eom\'etrique de $\Hilb^P_{\mathbb{P}^N}$ correspondant.
Si $l\gg0$, on peut plonger $\Hilb^P_{\mathbb{P}^N}$ dans
la grassmanienne des quotients de dimension $P(l)$ de $H^0(\mathbb{P}^N,\mathcal{O}(l))$, et le fibr\'e de Pl\"ucker induit un fibr\'e
ample sur $\Hilb^P_{\mathbb{P}^N}$ not\'e $\mathcal{P}_l$.
Le sch\'ema en groupes $SL_{N+1}$ agit sur $\Hilb^P_{\mathbb{P}^N}$
par changement de coordonn\'ees, et les fibr\'es en droites $\mathcal{P}_l$
sont naturellement lin\'earis\'es.
On dit que $Z$ est Hilbert-stable si $[Z]\in(\Hilb^P_{\mathbb{P}^N})^s(\mathcal{P}_l)$ pour $l\gg0$.
Par le crit\`ere de Hilbert-Mumford,
il est \'equivalent de demander que, pour $l\gg0$, pour tout sous-groupe \`a un
param\`etre  $\rho : \mathbb{G}_{m,K}\to SL_{N+1,K}$, on ait
$\mu^{\mathcal{P}_l}([Z],\rho)>0$. Pour pouvoir
utiliser ce crit\`ere, explicitons ces fonctions $\mu$ dans notre situation.

\begin{prop}\label{muhilbeg}
Soit $l\gg0$, $Z$ une intersection compl\`ete sur
le corps al\-g\'e\-bri\-quement clos $K$ et $\rho : \mathbb{G}_{m,K}\to SL_{N+1,K}$
un sous-groupe \`a un param\`etre choisi comme 
dans le lemme \ref{bon1ps}.
Alors :
\begin{equation*}
\mu^{\mathcal{P}_l}([Z],\rho)=\min_{\mathfrak{B}}\Big(\sum_{F\in\mathfrak{B}}\deg_{\alpha}(F)\Big),
\end{equation*}
o\`u le $\min$ porte sur les bases $\mathfrak{B}$ de $H^0(\mathbb{P}^N_K,\mathcal{I}_Z(l))$.
\end{prop}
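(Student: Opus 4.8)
The plan is to reduce $\mu^{\mathcal{P}_l}([Z],\rho)$ to a weight computation on the Plücker embedding and then to a minimum over bases by Gaussian elimination. First, for $l\gg0$ every such $Z$ is $l$-regular, so the restriction map realizes $[Z]$ as the quotient $V:=H^0(\mathbb{P}^N_K,\mathcal{O}(l))\twoheadrightarrow H^0(Z,\mathcal{O}_Z(l))$ with kernel $W:=H^0(\mathbb{P}^N_K,\mathcal{I}_Z(l))$ of dimension $r=\dim V-P(l)$; this is the datum defining the point of the Grassmannian, and $\mathcal{P}_l$ is the associated Plücker bundle. The monomials $\mathfrak{M}_l$ form an eigenbasis for $\rho$ with $\rho(t)\cdot M=t^{-\deg_{\alpha}(M)}M$ (the dual action used in Lemma \ref{mu1}). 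Since $\sum_{M\in\mathfrak{M}_l}\deg_{\alpha}(M)=0$ (symmetry of the monomials together with $\sum_i\alpha_i=0$), the line $\bigwedge^{\dim V}V$ is $\rho$-invariant of weight $0$, so the weight data attached to the quotient and to $\bigwedge^{r}W$ agree and I may compute with the subspace $W$.

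Next I would decompose the Plücker point. In $\bigwedge^{r}V$ the vectors $e_J=\bigwedge_{M\in J}M$, for $r$-subsets $J\subset\mathfrak{M}_l$, are $\rho$-eigenvectors of weight $-\deg_{\alpha}(J)$, where $\deg_{\alpha}(J):=\sum_{M\in J}\deg_{\alpha}(M)$. Writing $\bigwedge^{r}W=\sum_J c_J\,e_J$ and factoring out the power $t^{D_{\max}}$, one sees that $\lim_{t\to0}\rho(t)\cdot[Z]$ lies in the weight space of weight $-D_{\max}$, where $D_{\max}:=\max\{\deg_{\alpha}(J):c_J\neq0\}$. Calibrating signs exactly as in the proof of Lemma \ref{mu1}, where $\mu$ is read off as the weight of $\mathbb{G}_m$ on the fibre of the invertible sheaf at the limit (the opposite of its weight on the tautological subbundle), I obtain $\mu^{\mathcal{P}_l}([Z],\rho)=D_{\max}$.

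It then remains to prove the linear-algebra identity $D_{\max}=\min_{\mathfrak{B}}\sum_{F\in\mathfrak{B}}\deg_{\alpha}(F)$, the minimum over bases $\mathfrak{B}$ of $W$. I would read $\deg_{\alpha}(F)$ as the level of $F$ in the increasing filtration $V_{\le d}=\langle M:\deg_{\alpha}(M)\le d\rangle$, so that minimizing $\sum_{F}\deg_{\alpha}(F)$ is the minimum-weight-basis problem for this filtration, solved by a greedy/echelon procedure: starting from any basis, whenever two vectors share their top-$\alpha$-degree monomial I cancel it, and after refining $\deg_{\alpha}$ to a total order this terminates at a basis $F_1,\dots,F_r$ with distinct top monomials forming a set $J_0$, realizing the minimum and satisfying $\sum_k\deg_{\alpha}(F_k)=\deg_{\alpha}(J_0)$. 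The coefficient submatrix on the columns $J_0$ is triangular with nonzero diagonal, so $c_{J_0}\neq0$, while $e_{J_0}$ is the leading term of $\bigwedge^{r}W$ for the order induced by decreasing $\alpha$-degree, whence $\deg_{\alpha}(J)\le\deg_{\alpha}(J_0)$ for every $J$ with $c_J\neq0$. Combining, $\min_{\mathfrak{B}}\sum_F\deg_{\alpha}(F)=\deg_{\alpha}(J_0)=D_{\max}=\mu^{\mathcal{P}_l}([Z],\rho)$.

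The main difficulty is not a deep step but the careful bookkeeping of the several conventions that all enter the sign of $\mu$: the dual ($t^{-\deg_{\alpha}}$) action, the definition of $\mu$ as the opposite character, the direction of the limit $t\to0$, and the passage between the quotient and subspace descriptions of the Plücker point (legitimate only because the total $\rho$-weight on $V$ vanishes). One must also match the max over nonzero Plücker coordinates with the min over bases in the correct direction, since $\deg_{\alpha}(F)$ is a \emph{top} degree rather than a leading-pivot degree. The regularity input $l\gg0$, guaranteeing $W=H^0(\mathbb{P}^N_K,\mathcal{I}_Z(l))$ and the compatibility of the linearization, is what makes the whole reduction possible.
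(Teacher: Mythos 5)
Your proposal is correct, but it cannot coincide with ``the paper's proof'' for the simple reason that the paper gives none: the author disposes of this proposition in one line, citing Harris--Morrison, \emph{Moduli of Curves}, Prop.\ 4.23, as a classical computation. What you have written is essentially a self-contained reconstruction of the argument behind that citation: (1) for $l\gg0$ the Hilbert point is the datum of $W=H^0(\mathbb{P}^N_K,\mathcal{I}_Z(l))\subset V$, and since $\det V$ has $\rho$-weight $0$ (because $\sum_i\alpha_i=0$) one may trade the quotient description for the subspace one; (2) the limit of $\rho(t)\cdot[Z]$ in Pl\"ucker coordinates sits in the eigenspace of weight $-D_{\max}$, giving $\mu^{\mathcal{P}_l}([Z],\rho)=D_{\max}$ once the signs are calibrated; (3) the echelon-basis argument identifying $D_{\max}$ with $\min_{\mathfrak{B}}\sum_F\deg_\alpha(F)$. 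Two points deserve to be made fully explicit. First, in step (3) the minimality direction --- that \emph{every} basis $\mathfrak{B}$ satisfies $\sum_{F\in\mathfrak{B}}\deg_\alpha(F)\geq D_{\max}$ --- needs the Leibniz expansion: $c_J\neq0$ forces a bijection $\sigma$ between $\mathfrak{B}$ and $J$ with the coefficient of $M_{\sigma(F)}$ in $F$ nonzero, whence $\deg_\alpha(J)\leq\sum_F\deg_\alpha(F)$; your greedy procedure proves only that the echelon basis attains $\deg_\alpha(J_0)=D_{\max}$, and this one extra line closes the argument. Second, your decision to calibrate signs against Lemme \ref{mu1} rather than against a textbook is exactly right and worth stressing: the paper's conventions (the ``geometric'' line bundle associated to $\mathcal{O}(1)$ has fibres \emph{dual} to the sheaf-theoretic ones, and $\mu$ is the opposite of the weight on that geometric fibre) introduce two dualities that cancel, and it is only because of this that the quotient and subspace weight data ``agree'' rather than differ by a sign; with the paper's conventions one indeed gets $\mu^{\mathcal{P}_l}([Z],\rho)=D_{\max}$, consistent with $\mu^{\mathcal{O}(1)}(\langle F\rangle,\rho)=\deg_\alpha(F)$ and with the downstream use of the $\min$ formula in Lemme \ref{muhilbineg}.
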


\begin{proof}[$\mathbf{Preuve}$]~
Ce calcul classique se trouve par exemple dans \cite{HarrisMorrison} Prop. 4.23.
\end{proof}

\subsection{Majoration des fonctions $\mu$}

On majore ici la quantit\'e $\mu^{\mathcal{P}_l}([Z],\rho)$ calcul\'ee dans la proposition \ref{muhilbeg}.

\begin{lemme}\label{muhilbineg}
Soient $Z=\{F_1=\dots=F_c=0\}$ une intersection compl\`ete sur le corps alg\'ebriquement clos $K$,
$\rho : \mathbb{G}_{m,K}\to SL_{N+1,K}$ un sous-groupe \`a un param\`etre choisi comme 
dans le lemme \ref{bon1ps} et $l\gg0$.
Alors :
\begin{equation}\label{eqmuhilb}
\mu^{\mathcal{P}_l}([Z],\rho)\leq\sum_{i=1}^c\deg_\alpha(F_i)
\Big(\sum_{i\notin I\subset\{1,\dots,c\}}(-1)^{c-1-|I|}\tbinom{N+l-\sum_{j\notin I}d_j}{N}\Big).
\end{equation}
\end{lemme}

\begin{proof}[$\mathbf{Preuve}$]~
Si $I\subset\{1,\dots,c\}$, on note $V^I_l=H^0(\mathbb{P}^N_K,\mathcal{O}(l-\sum_{i\notin I} d_i))$. 
Pour $0\leq r\leq c$, on pose $K^r_l=\bigoplus\limits_{\substack{I\subset\{1,\dots,c\}\\|I|=r}}V^I_l$.
Consid\'erons la r\'esolution de Koszul de $\mathcal{O}_Z$ sur $\mathbb{P}^N_K$. Si l'on tensorise cette r\'esolution par $\mathcal{O}(l)$ 
pour $l\gg0$, le complexe obtenu en prenant les sections globales reste exact par annulation de Serre. 
On obtient ainsi une suite exacte longue de la forme :
\begin{equation*}
 0\to K^0_l\to\dots\stackrel{d^{r-1}}{\rightarrow}K^r_l\stackrel{d^r}{\rightarrow}\dots\to K^c_l\to H^0(Z,\mathcal{O}(l))\to 0,
\end{equation*}
o\`u  $d^r:V^I_l\to V^J_l$ est au signe pr\`es la multiplication par $F_i$ si $J=I\cup\{i\}$ et est nul dans les autres cas.
On notera $N^r_l=\Ker(d^r)=\Ima(d^{r-1})\subset K^r_l$. 

On introduit les notations suivantes.
Si $F\in V^I_l$, on pose : $$\deg'_\alpha(F)=\deg_\alpha(F)-\sum_{i\notin I}\deg_\alpha(F_i).$$ 
Si $\Phi=(F_I)\in K^r_l$, on pose $\deg'_\alpha(\Phi)=\max_I\deg'_\alpha(F_I)$.
 De plus, on modifie l\'eg\`erement les conventions \ref{notationsalpha} : dans toute cette preuve, la notation $\Phi^\alpha$ ou la notion
d'\'el\'ement $\alpha$-homog\`ene fait r\'ef\'erence \`a $\deg_\alpha'$ et non \`a $\deg_\alpha$. Remarquons que si $0\leq r<c$ et $\Phi\in K^r_l$,
par d\'efinition de $\deg_{\alpha}'$ et vu l'expression de $d^r$, on a $\deg'_\alpha(d^r(\Phi))\leq\deg'_\alpha(\Phi)$.

On va montrer par r\'ecurrence sur $0\leq r\leq c$ l'\'enonc\'e suivant : il existe une base $\mathfrak{B}^r_l$ de $N^r_l$
telle que : 
\begin{equation}\label{hyprec}
\sum_{\Phi\in\mathfrak{B}^r_l}\deg_{\alpha}'(\Phi)\leq \sum_{i=1}^c\deg_\alpha(F_i)
\Big(\sum\limits_{\substack{i\notin I\subset\{1,\dots,c\}\\|I|\leq r-1}}(-1)^{r-1-|I|}\dim(V_l^I)\Big).
\end{equation}
Pour $r=0$, $N^0_l=\{0\}$, de sorte qu'on peut prendre $\mathfrak{B}^0=\varnothing$.

Supposons l'\'enonc\'e vrai pour $r$ et montrons-le pour $r+1$. Pour cela, soit $\mathfrak{B}^r_l$ une base de $N^r_l$ telle que 
$\sum_{\Phi\in\mathfrak{B}^r_l}\deg_{\alpha}'(\Phi)$ soit minimal. On voit ais\'ement
que $\mathfrak{B}^{r,\alpha}_l=\{\Phi^\alpha,\Phi\in\mathfrak{B}^r_l\}$ est une famille libre d'\'el\'ements
$\alpha$-homog\`enes de $K^r_l$. Compl\'etons cette famille en une base $\mathfrak{C}^{r}_l$ de $K_l^r$ constitu\'ee d'\'el\'ements 
$\alpha$-homog\`enes. Remarquons que $\sum_{\Phi\in\mathfrak{C}}\deg'_\alpha(\Phi)$
ne d\'epend pas de la base $\mathfrak{C}$ de $K_l^r$ constitu\'ee d'\'el\'ements 
$\alpha$-homog\`enes.  Utilisant $\alpha_0+\dots+\alpha_N=0$, cette quantit\'e est facile \`a calculer pour la base
$\mathfrak{C}$ constitu\'ee des mon\^omes.
Il vient donc :
\begin{equation}\label{basemonome}
\sum_{\Phi\in\mathfrak{C}^{r}_l}\deg'_\alpha(\Phi)=\sum_{\Phi\in\mathfrak{C}}\deg'_\alpha(\Phi)=\sum\limits_{\substack{I\subset\{1,\dots,c\}\\|I|=r}}
\dim(V_l^I)\Big(-\sum_{i\notin I}\deg_\alpha(F_i)\Big).
\end{equation}
Comme $\{\Phi^\alpha,\Phi\in\mathfrak{B}^r_l\cup(\mathfrak{C}^{r}_l\setminus\mathfrak{B}^{r,\alpha}_l)\}=\mathfrak{C}^{r}_l$ est une base
de $K_l^r$, $\mathfrak{B}^r_l\cup(\mathfrak{C}^{r}_l\setminus\mathfrak{B}^{r,\alpha}_l)$ est \'egalement une base de $K_l^r$. En particulier,
$\mathfrak{C}^{r}_l\setminus\mathfrak{B}^{r,\alpha}_l$ est une base d'un suppl\'ementaire de $N^r_l$ dans $K^r_l$, de sorte
que $\mathfrak{B}^{r+1}_l=d^r(\mathfrak{C}^{r}_l\setminus\mathfrak{B}^{r,\alpha}_l)$ est une base de $N^{r+1}_l$.
Montrons que cette base convient. Pour cela,
on calcule :
\begin{alignat*}{3}
\sum_{\Phi\in\mathfrak{B}^{r+1}_l}\deg'_\alpha(\Phi)&\leq\sum_{\Phi\in(\mathfrak{C}^{r}_l\setminus\mathfrak{B}^{r,\alpha}_l)}\deg'_\alpha(\Phi) 
% \\                                                     &
=\sum_{\Phi\in\mathfrak{C}^{r}_l}\deg'_\alpha(\Phi)-\sum_{\Phi\in\mathfrak{B}^{r}_l}\deg'_\alpha(\Phi)\\
                                                     &\leq\sum_{i=1}^c\deg_\alpha(F_i)
\Big(\sum\limits_{\substack{i\notin I\subset\{1,\dots,c\}\\|I|\leq r}}(-1)^{r-|I|}\dim(V_l^I)\Big),
\end{alignat*}
o\`u l'on a utilis\'e respectivement (\ref{basemonome}) et l'hypoth\`ese de r\'ecurrence (\ref{hyprec}) pour \'evaluer les deux termes. Cela conclut la r\'ecurrence.

Faisons \`a pr\'esent $r=c$ dans (\ref{hyprec}). On obtient une base $\mathfrak{B}_l^c$ de 
$N^c_l=\Ker[H^0(\mathbb{P}^N_K,\mathcal{O}(l))\to H^0(Z,\mathcal{O}(l))]=H^0(\mathbb{P}^N_K,\mathcal{I}_Z(l))$ car $l\gg0$. 
Comme $\deg_\alpha$ et $\deg'_\alpha$ co\"incident pour
des \'el\'ements de $H^0(\mathbb{P}^N_K,\mathcal{O}(l))$, et comme $\dim(V^I_l)=\tbinom{N+l-\sum_{j\notin I}d_j}{N}$, il vient :
$$\sum_{F\in\mathfrak{B}_l^c}\deg_{\alpha}(F)\leq\sum_{i=1}^c\deg_\alpha(F_i)
\Big(\sum_{i\notin I\subset\{1,\dots,c\}}(-1)^{c-1-|I|}\tbinom{N+l-\sum_{j\notin I}d_j}{N}\Big).$$
Par la proposition \ref{muhilbeg}, cela conclut.
\end{proof}

On en d\'eduit la proposition suivante : 

\begin{prop}\label{muhilbasympt}
Soient $Z=\{F_1=\dots=F_c=0\}$ une intersection compl\`ete sur le corps alg\'ebriquement clos $K$
et $\rho : \mathbb{G}_{m,K}\to SL_{N+1,K}$ un sous-groupe \`a un param\`etre choisi comme 
dans le lemme \ref{bon1ps}.
Alors :
\begin{equation}\label{eqmuhilb2}
\limsup_{l\to+\infty} \frac{\mu^{\mathcal{P}_l}([Z],\rho)}{l^{N-c+1}}\leq\frac{d_1\dots d_c}{(N-c+1)!}\sum_{i=1}^c\frac{\deg_{\alpha}(F_i)}{d_i}.
\end{equation}
\end{prop}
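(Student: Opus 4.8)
The plan is to feed the bound of Lemma \ref{muhilbineg} into the $\limsup$ and to compute, for each $i$, the exact leading-order behaviour in $l$ of the coefficient of $\deg_\alpha(F_i)$, which is an alternating sum of binomial coefficients and hence a polynomial in $l$.

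First I would fix $i\in\{1,\dots,c\}$ and isolate from \eqref{eqmuhilb} the coefficient
$$A_i(l)=\sum_{i\notin I\subset\{1,\dots,c\}}(-1)^{c-1-|I|}\tbinom{N+l-\sum_{j\notin I}d_j}{N}.$$
Reindexing by the complement $J=\{1,\dots,c\}\setminus I$, so that $i\in J$, $|I|=c-|J|$ and $\sum_{j\notin I}d_j=\sum_{j\in J}d_j$, this becomes
$$A_i(l)=\sum_{i\in J\subset\{1,\dots,c\}}(-1)^{|J|-1}\tbinom{N+l-\sum_{j\in J}d_j}{N},$$
and splitting off the index $i$ by writing $J=\{i\}\cup J'$ with $J'\subset\{1,\dots,c\}\setminus\{i\}$ gives
$$A_i(l)=\sum_{J'\subset\{1,\dots,c\}\setminus\{i\}}(-1)^{|J'|}\tbinom{N+(l-d_i)-\sum_{j\in J'}d_j}{N}.$$
I would then recognise the right-hand side as the iterated finite difference $\big(\prod_{j\neq i}(1-T^{d_j})\big)\tbinom{N+m}{N}$ evaluated at $m=l-d_i$, where $T^{d}$ denotes the shift operator $m\mapsto m-d$ acting on polynomials in $m$.

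The heart of the argument is the leading-term computation. Since $\tbinom{N+m}{N}$ is a polynomial in $m$ of degree $N$ with leading coefficient $1/N!$, and since each operator $(1-T^{d})$ sends a polynomial of degree $D$ and leading coefficient $a$ to one of degree $D-1$ and leading coefficient $a\,D\,d$ (because $m^D-(m-d)^D=Dd\,m^{D-1}+\cdots$), applying in turn the $c-1$ operators $(1-T^{d_j})_{j\neq i}$ shows that $A_i(l)$ is a polynomial in $l$ of degree $N-c+1$ with leading coefficient $\prod_{j\neq i}d_j/(N-c+1)!=d_1\cdots d_c/(d_i(N-c+1)!)$; the shift by $d_i$ does not affect the leading term. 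Hence $A_i(l)/l^{N-c+1}\to d_1\cdots d_c/(d_i(N-c+1)!)$ as $l\to+\infty$. This step, namely identifying the alternating binomial sum as a $(c-1)$-fold finite difference and tracking the leading coefficient, is the only genuinely delicate point; the rest is bookkeeping.

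Finally I would assemble the pieces. Dividing the inequality of Lemma \ref{muhilbineg} by $l^{N-c+1}$ yields, for all $l\gg0$,
$$\frac{\mu^{\mathcal{P}_l}([Z],\rho)}{l^{N-c+1}}\leq\sum_{i=1}^c\deg_\alpha(F_i)\,\frac{A_i(l)}{l^{N-c+1}}.$$
The right-hand side is a finite sum of sequences, each converging to $\deg_\alpha(F_i)\,d_1\cdots d_c/(d_i(N-c+1)!)$, so it converges and its $\limsup$ equals its limit. Passing to the $\limsup$ in $l$ therefore gives
$$\limsup_{l\to+\infty}\frac{\mu^{\mathcal{P}_l}([Z],\rho)}{l^{N-c+1}}\leq\frac{d_1\cdots d_c}{(N-c+1)!}\sum_{i=1}^c\frac{\deg_\alpha(F_i)}{d_i},$$
which is exactly \eqref{eqmuhilb2}.
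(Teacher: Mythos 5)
Your proof is correct and follows essentially the same route as the paper: plug the bound of Lemma \ref{muhilbineg} into the $\limsup$, identify the coefficient of each $\deg_\alpha(F_i)$ as a polynomial in $l$ of degree $N-c+1$ with leading coefficient $d_1\cdots d_c/(d_i\,(N-c+1)!)$, then divide by $l^{N-c+1}$ and pass to the limit. The only difference is cosmetic: where the paper asserts the leading-term formula "par récurrence sur $c$", you make that induction explicit by writing the alternating sum as the $(c-1)$-fold finite difference $\prod_{j\neq i}(1-T^{d_j})$ applied to $\binom{N+m}{N}$, which is a clean way to carry out exactly that recursion.
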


\begin{proof}[$\mathbf{Preuve}$]
Le polyn\^ome $\sum_{i\notin I\subset\{1,\dots,c\}}(-1)^{c-1-|I|}\tbinom{N+X-\sum_{j\notin I}d_j}{N}$ 
a pour terme dominant $\frac{d_1\dots\hat{d_i}\dots d_c}{(N-c+1)!}X^{N-c+1}$, comme le montre une r\'ecurrence sur $c$.

Ainsi, le terme de droite dans l'in\'egalit\'e (\ref{eqmuhilb}) est un polyn\^ome en $l$ de degr\'e $\leq N-c+1$ et dont le coefficient 
de $l^{N-c+1}$ est $\frac{d_1\dots d_c}{(N-c+1)!}\sum_{i=1}^c\frac{\deg_{\alpha}(F_i)}{d_i}$.
On conclut en divisant par $l^{N-c+1}$ l'in\'egalit\'e (\ref{eqmuhilb}), et en faisant tendre $l$ vers $+\infty$.
\end{proof}

\subsection{Condition n\'ecessaire de Hilbert-stabilit\'e}\label{hilbstabdiscussion}

La proposition \ref{muhilbasympt} a pour corollaire imm\'ediat une condition n\'ecessaire de Hilbert-stabilit\'e, qui
est le r\'esultat principal de cette partie.
\begin{cor}\label{conditionhilb}
 Soit $Z=\{F_1=\dots=F_c=0\}$ une intersection compl\`ete Hilbert-stable sur le corps alg\'ebriquement clos $K$. Alors,
si $\alpha_0\leq\dots\leq\alpha_N$ sont des entiers de somme nulle, et quelque soit le
syst\`eme de coordonn\'ees choisi,
\begin{equation}\label{cnhilb}
\sum_{i=1}^c\frac{\deg_{\alpha}(F_i)}{d_i}\geq 0.
\end{equation}
\end{cor}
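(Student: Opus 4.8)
The plan is to read the corollary straight off Proposition~\ref{muhilbasympt} together with the Hilbert--Mumford criterion, as the phrase ``corollaire imm\'ediat'' suggests. Fix a coordinate system on $\mathbb{P}^N_K$ and integers $\alpha_0\leq\dots\leq\alpha_N$ summing to zero. If all the $\alpha_i$ vanish, then $\deg_{\alpha}(M)=0$ for every monomial $M$, hence $\deg_{\alpha}(F_i)=0$ for all $i$ and the inequality~(\ref{cnhilb}) is the trivial equality $0\geq0$; so I may assume the $\alpha_i$ are not all zero.

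First I would turn $\alpha$ into a one-parameter subgroup. Because $\sum_i\alpha_i=0$, the formula $\rho(t)\cdot(x_0,\dots,x_N)=(t^{\alpha_0}x_0,\dots,t^{\alpha_N}x_N)$ defines a morphism $\rho:\mathbb{G}_{m,K}\to SL_{N+1,K}$ (its determinant being $t^{\sum_i\alpha_i}=1$), and $\rho$ is non trivial since the $\alpha_i$ are not all zero. In the fixed coordinate system $\rho$ is exactly of the shape produced by the lemme~\ref{bon1ps}, so Proposition~\ref{muhilbasympt} applies to $Z$ and this $\rho$ and yields
\begin{equation*}
\limsup_{l\to+\infty}\frac{\mu^{\mathcal{P}_l}([Z],\rho)}{l^{N-c+1}}\leq\frac{d_1\dots d_c}{(N-c+1)!}\sum_{i=1}^c\frac{\deg_{\alpha}(F_i)}{d_i}.
\end{equation*}

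Next I would invoke Hilbert-stability. By definition $[Z]\in(\Hilb^P_{\mathbb{P}^N})^s(\mathcal{P}_l)$ for $l\gg0$, so for every such $l$ the Hilbert--Mumford criterion gives $\mu^{\mathcal{P}_l}([Z],\rho)>0$, our $\rho$ being non trivial. Since $1\leq c\leq N-1$ forces $l^{N-c+1}>0$, each term $\mu^{\mathcal{P}_l}([Z],\rho)/l^{N-c+1}$ is positive for $l\gg0$, whence its $\limsup$ is $\geq0$. Combining this with the displayed bound and dividing by the positive constant $\tfrac{d_1\dots d_c}{(N-c+1)!}$ gives $\sum_{i=1}^c\frac{\deg_{\alpha}(F_i)}{d_i}\geq0$, which is precisely~(\ref{cnhilb}).

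There is essentially no obstacle here, all the difficulty having been absorbed into the upper bound of Proposition~\ref{muhilbasympt}; the only points needing a little care are bookkeeping ones. I must check that $\alpha$ lands in $SL_{N+1}$ rather than merely $GL_{N+1}$ (this is exactly where the sum-zero hypothesis is used), dispose of the degenerate case in which $\rho$ would be trivial (the all-zero $\alpha$ handled at the outset), and note that the leading coefficient $\tfrac{d_1\dots d_c}{(N-c+1)!}$ of the bounding polynomial is strictly positive, so that the inequality survives division. One should also keep in mind that $\deg_{\alpha}(F_i)$, and hence the whole statement, depends on the chosen coordinate system through the weights attached to $X_0,\dots,X_N$, which is why the conclusion is asserted for an arbitrary coordinate system.
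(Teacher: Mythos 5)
Your proof is correct and is exactly the ``immediate'' deduction the paper intends: the paper gives no separate argument for the corollary, precisely because it follows from Proposition~\ref{muhilbasympt} combined with the Hilbert--Mumford positivity $\mu^{\mathcal{P}_l}([Z],\rho)>0$ for $l\gg0$, which is what you spell out. Your bookkeeping points (the sum-zero hypothesis placing $\rho$ in $SL_{N+1}$, the trivial all-zero case, and the positivity of the leading coefficient) are all the right ones and are handled correctly.
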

Le Theorem 1.1 de \cite{YujiSano} montre que si $Z$ est Chow-stable, l'in\'ega\-li\-t\'e (\ref{cnhilb}) est stricte.
Rappelons que, par un th\'eor\`eme de Fogarty (\cite{Fogarty}, voir aussi \cite{GIT} App. 4C), la Chow-stabilit\'e de $Z$ implique la
Hilbert-stabilit\'e de $Z$ (en g\'en\'eral, on n'a pas l'implication inverse).
Le corollaire \ref{conditionhilb} et le Theorem 1.1 de \cite{YujiSano} sont donc tr\`es proches mais
ne peuvent se d\'eduire l'un de l'autre.

Enfin, l'article \cite{YujiSano} affirme (c'est la preuve du Corollary 1.2)
que, si $c=2$, et si l'in\'egalit\'e (\ref{cnhilb}) est v\'erifi\'ee et est stricte, $Z$ est Hilbert-stable. L'argument donn\'e est
malheureusement erron\'e.

\vspace{1em}

 Quand $Z$ est lisse, l'in\'egalit\'e (\ref{cnhilb}) est vraie par le th\'eor\`eme
\ref{alphadeg} pour $k_1=\dots=k_c=1$. 
Autrement dit, le th\'eor\`eme \ref{alphadeg} implique une forme faible de la Hilbert-stabilit\'e
des intersections compl\`etes lisses.

La Hilbert-stabilit\'e des intersections compl\`etes lisses est connue dans tr\`es peu de cas.
Quand $c=1$ est trivial, $\Hilb^P_{\mathbb{P}^N}$
est un espace projectif, tous les fibr\'es amples $\mathcal{P}_l$ introduits ci-dessus sont donc
n\'ecessairement proportionnels au fibr\'e $\mathcal{O}(1)$, et les hypersurfaces
lisses sont Hilbert-stables par \cite{GIT} Prop. 4.2.
Signalons un cas non trivial o\`u la Hilbert-stabilit\'e est connue.
 Quand $N=3$, $c=2$, $d_1=2$ et $d_2=3$, Casalaina-Martin, Jensen et Laza
\cite{Lazaetcie} montrent que les intersections
compl\`etes lisses sont Chow-stables, ce qui implique leur
Hilbert-stabilit\'e par le th\'eor\`eme de Fogarty
mentionn\'e ci-dessus. 

Pour montrer la Hilbert-stabilit\'e d'une intersection compl\`ete lisse $Z$, la difficult\'e
suppl\'ementaire par rapport au th\'eor\`eme \ref{alphadeg} est une estimation de la diff\'erence entre les deux termes de l'in\'egalit\'e
(\ref{eqmuhilb}), qui d\'epend des compensations entre termes de $\alpha$-degr\'e maximal des \'equations de $Z$.
Comme le signale le rapporteur,
il serait d\'ej\`a int\'eressant de montrer la Hilbert-stabilit\'e d'une intersection compl\`ete g\'en\'erale,
dans l'esprit de \cite{Alpergen}.
%\vspace{1em}
%
%Quand $d_1<d_2=\dots=d_c$, on peut faire un lien avec la partie \ref{partieqp}.
%Par la proposition \ref{mucombo}, le terme de gauche
%de (\ref{cnhilb})
%est exactement la quantit\'e intervenant dans
%crit\`ere de Hilbert-Mumford pour $Z$ relativement au fibr\'e en droites $SL_{N+1}$-lin\'earis\'e
%$\mathcal{O}(d_2,d_1)$ sur $\bar{H}$.
%
%  Ceci s'explique de la mani\`ere suivante. Le fibr\'e en droites $\mathcal{P}_l|_{H^{ic}}$ vu comme
%\'el\'ement de $\Pic(\bar{H})\simeq\Pic(H^{ic})$ 
%est de la forme $\mathcal{O}(\lambda_{1,l},\lambda_{2,l})$ et
%$\lim_{l\to\infty}\frac{\lambda_{1,l}}{\lambda_{2,l}}=\frac{d_2}{d_1}$.
%
% Cependant, par la proposition \ref{ample}, $\mathcal{O}(d_2,d_1)$ n'est
%jamais ample sur $\bar{H}$, de sorte qu'on ne peut pas appliquer
%le crit\`ere de Hilbert-Mumford. En un certain sens, la positivit\'e
%du terme de gauche de (\ref{cnhilb}) pour $Z$ lisse n'est pas assez forte pour impliquer la quasi-projectivit\'e
%de l'espace de modules grossier $M$.
\addcontentsline{toc}{section}{R\'{e}f\'{e}rences}

\bibliographystyle{plain-fr}
\bibliography{biblio}

\def\cprime{$'$}
\begin{thebibliography}{10}
\expandafter\ifx\csname fonteauteurs\endcsname\relax
\def\fonteauteurs{\scshape}\fi

\bibitem{Allcock33}
D.~\bgroup\fonteauteurs\bgroup Allcock\egroup\egroup{} :
\newblock The moduli space of cubic threefolds.
\newblock {\em J. Algebraic Geom.}, 12(2)\string:\penalty500\relax 201--223,
  2003.

\bibitem{Alpergen}
J.~\bgroup\fonteauteurs\bgroup Alper\egroup\egroup{},
  D.~\bgroup\fonteauteurs\bgroup Smyth\egroup\egroup{} et
  M.~\bgroup\fonteauteurs\bgroup Fedorchuk\egroup\egroup{} :
\newblock Finite {H}ilbert stability of (bi)canonical curves.
\newblock {\em eprint arXiv:1109.4986v3}, 2012.

\bibitem{AM224}
D.~\bgroup\fonteauteurs\bgroup Avritzer\egroup\egroup{} et
  R.~\bgroup\fonteauteurs\bgroup Miranda\egroup\egroup{} :
\newblock Stability of pencils of quadrics in {${\bf P}^4$}.
\newblock {\em Bol. Soc. Mat. Mexicana (3)}, 5(2)\string:\penalty500\relax
  281--300, 1999.

\bibitem{Ooldeg}
O.~\bgroup\fonteauteurs\bgroup Benoist\egroup\egroup{} :
\newblock Degr{\'e}s d'homog{\'e}n{\'e}it{\'e} de l'ensemble des intersections
  compl{\`e}tes singuli{\`e}res.
\newblock {\em eprint arXiv:1009.0704, {\`a} para{\^i}tre dans Ann. Inst.
  Fourier}, 2010.

\bibitem{Oolsep}
O.~\bgroup\fonteauteurs\bgroup Benoist\egroup\egroup{} :
\newblock S{\'e}paration et propri{\'e}t{\'e} de {D}eligne-{M}umford des champs
  de modules d'intersections compl{\`e}tes lisses.
\newblock {\em eprint arXiv:1111.1582, {\`a} para{\^i}tre dans J. London Math.
  Soc.}, 2011.

\bibitem{Lazaetcie}
S.~\bgroup\fonteauteurs\bgroup Casalaina-Martin\egroup\egroup{},
  D.~\bgroup\fonteauteurs\bgroup Jensen\egroup\egroup{} et
  R.~\bgroup\fonteauteurs\bgroup Laza\egroup\egroup{} :
\newblock The geometry of the ball quotient model of the moduli space of genus
  four curves.
\newblock {\em eprint arXiv:1109.5669}, 2011.

\bibitem{Lazaetcie2}
S.~\bgroup\fonteauteurs\bgroup Casalaina-Martin\egroup\egroup{},
  D.~\bgroup\fonteauteurs\bgroup Jensen\egroup\egroup{} et
  R.~\bgroup\fonteauteurs\bgroup Laza\egroup\egroup{} :
\newblock Log canonical models and variation of {GIT} for genus four canonical
  curves.
\newblock {\em eprint arXiv:1203.5014}, 2012.

\bibitem{Fogarty}
J.~\bgroup\fonteauteurs\bgroup Fogarty\egroup\egroup{} :
\newblock Truncated {H}ilbert functors.
\newblock {\em J. Reine Angew. Math.}, 234\string:\penalty500\relax 65--88,
  1969.

\bibitem{EGA31}
A.~\bgroup\fonteauteurs\bgroup Grothendieck\egroup\egroup{} :
\newblock {\'E}l{\'e}ments de g{\'e}om{\'e}trie alg{\'e}brique. {III}.
  {\'e}tude cohomologique des faisceaux coh{\'e}rents. {I}.
\newblock {\em Inst. Hautes {\'E}tudes Sci. Publ. Math.},
  (11)\string:\penalty500\relax 167, 1961.

\bibitem{HarrisMorrison}
J.~\bgroup\fonteauteurs\bgroup Harris\egroup\egroup{} et
  I.~\bgroup\fonteauteurs\bgroup Morrison\egroup\egroup{} :
\newblock {\em Moduli of curves}, volume 187 de {\em Graduate Texts in
  Mathematics}.
\newblock Springer-Verlag, New York, 1998.

\bibitem{KM}
S.~\bgroup\fonteauteurs\bgroup Keel\egroup\egroup{} et
  S.~\bgroup\fonteauteurs\bgroup Mori\egroup\egroup{} :
\newblock Quotients by groupoids.
\newblock {\em Ann. of Math. (2)}, 145(1)\string:\penalty500\relax 193--213,
  1997.

\bibitem{KnudsenIII}
F.~F. \bgroup\fonteauteurs\bgroup Knudsen\egroup\egroup{} :
\newblock The projectivity of the moduli space of stable curves. {III}. {T}he
  line bundles on {$M\_{g,n}$}, and a proof of the projectivity of {$\overline
  M\_{g,n}$} in characteristic {$0$}.
\newblock {\em Math. Scand.}, 52(2)\string:\penalty500\relax 200--212, 1983.

\bibitem{livreKollar}
J.~\bgroup\fonteauteurs\bgroup Koll{\'a}r\egroup\egroup{} :
\newblock Livre en pr{\'e}paration.
\newblock {\em http://www.math.princeton.edu/$\sim$kollar/}.

\bibitem{Kollarcomplete}
J.~\bgroup\fonteauteurs\bgroup Koll{\'a}r\egroup\egroup{} :
\newblock Projectivity of complete moduli.
\newblock {\em J. Differential Geom.}, 32(1)\string:\penalty500\relax 235--268,
  1990.

\bibitem{Kollarquo}
J.~\bgroup\fonteauteurs\bgroup Koll{\'a}r\egroup\egroup{} :
\newblock Quotient spaces modulo algebraic groups.
\newblock {\em Ann. of Math. (2)}, 145(1)\string:\penalty500\relax 33--79,
  1997.

\bibitem{Kollarcex}
J.~\bgroup\fonteauteurs\bgroup Koll{\'a}r\egroup\egroup{} :
\newblock Non-quasi-projective moduli spaces.
\newblock {\em Ann. of Math. (2)}, 164(3)\string:\penalty500\relax 1077--1096,
  2006.

\bibitem{Laza34}
R.~\bgroup\fonteauteurs\bgroup Laza\egroup\egroup{} :
\newblock The moduli space of cubic fourfolds.
\newblock {\em J. Algebraic Geom.}, 18(3)\string:\penalty500\relax 511--545,
  2009.

\bibitem{LazarsfeldI}
R.~\bgroup\fonteauteurs\bgroup Lazarsfeld\egroup\egroup{} :
\newblock {\em Positivity in algebraic geometry. {I}}, volume~48 de {\em
  Ergebnisse der Mathematik und ihrer Grenzgebiete}.
\newblock Springer-Verlag, Berlin, 2004.
\newblock Classical setting : line bundles and linear series.

\bibitem{Mumfordstab}
D.~\bgroup\fonteauteurs\bgroup Mumford\egroup\egroup{} :
\newblock Stability of projective varieties.
\newblock {\em Enseignement Math. (2)}, 23(1-2)\string:\penalty500\relax
  39--110, 1977.

\bibitem{GIT}
D.~\bgroup\fonteauteurs\bgroup Mumford\egroup\egroup{},
  J.~\bgroup\fonteauteurs\bgroup Fogarty\egroup\egroup{} et
  F.~\bgroup\fonteauteurs\bgroup Kirwan\egroup\egroup{} :
\newblock {\em Geometric invariant theory}, volume~34 de {\em Ergebnisse der
  Mathematik und ihrer Grenzgebiete (2)}.
\newblock Springer-Verlag, Berlin, third \'edition, 1994.

\bibitem{YujiSano}
Y.~\bgroup\fonteauteurs\bgroup Sano\egroup\egroup{} :
\newblock On stability criterion of complete intersections.
\newblock {\em J. Geom. Anal.}, 14(3)\string:\penalty500\relax 533--544, 2004.

\bibitem{Sernesi}
E.~\bgroup\fonteauteurs\bgroup Sernesi\egroup\egroup{} :
\newblock {\em Deformations of algebraic schemes}, volume 334 de {\em
  Grundlehren der Mathematischen Wissenschaften}.
\newblock Springer-Verlag, Berlin, 2006.

\bibitem{Seshadri}
C.~S. \bgroup\fonteauteurs\bgroup Seshadri\egroup\egroup{} :
\newblock Geometric reductivity over arbitrary base.
\newblock {\em Advances in Math.}, 26(3)\string:\penalty500\relax 225--274,
  1977.

\bibitem{Shah4}
J.~\bgroup\fonteauteurs\bgroup Shah\egroup\egroup{} :
\newblock Degenerations of {$K3$} surfaces of degree {$4$}.
\newblock {\em Trans. Amer. Math. Soc.}, 263(2)\string:\penalty500\relax
  271--308, 1981.

\bibitem{Viehweg}
E.~\bgroup\fonteauteurs\bgroup Viehweg\egroup\egroup{} :
\newblock {\em Quasi-projective moduli for polarized manifolds}, volume~30 de
  {\em Ergebnisse der Mathematik und ihrer Grenzgebiete (3)}.
\newblock Springer-Verlag, Berlin, 1995.

\end{thebibliography}

\end{document}